\theoremstyle{plain}
\newtheorem{theorem}{Theorem}[section]
\newtheorem{proposition}[theorem]{Proposition}
\newtheorem{lemma}[theorem]{Lemma}
\newtheorem{corollary}[theorem]{Corollary}
\newtheorem*{question}{Question}
\theoremstyle{definition}
\newtheorem{definition}[theorem]{Definition}
\newtheorem{example}[theorem]{Example}
\theoremstyle{definition}
\newtheorem{condition}[theorem]{Conditions}
\newtheorem*{theorem*}{Theorem}
\newtheorem*{proposition*}{Proposition}
\newtheorem*{lemma*}{Lemma}
\theoremstyle{remark}
\newtheorem{remark}{Remark}[section]
\numberwithin{equation}{section}
\newcommand{\FF}{\mathbb F}
\newcommand{\QQ}{\mathbb Q}
\newcommand{\OO}{\mathcal O}
\newcommand{\ZZ}{\mathbb Z}
\newcommand{\on}{\operatorname}
\newcommand{\ol}{\overline}
\newcommand{\wt}{\widetilde}
\newcommand{\wh}{\widehat}
\newcommand*{\defeq}{\mathrel{\rlap{%
                     \raisebox{0.3ex}{$\m@th\cdot$}}%
                     \raisebox{-0.3ex}{$\m@th\cdot$}}%
                     =}
\newcommand{\pp}{\mathfrak p}
\newcommand{\id}{\on{id}}
\newcommand\ab{\text{ab}}
\newcommand{\GSp}{\operatorname{GSp}}
\newcommand*{\nolink}[1]{%
  \begin{NoHyper}#1\end{NoHyper}%
}
\DeclareMathOperator\Sp{Sp}
\DeclareMathOperator\GL{GL}
\DeclareMathOperator\Gal{Gal}
\DeclareMathOperator\quo{Quo}
\DeclareMathOperator\frob{Frob}
\DeclareMathOperator\charpoly{ch}
\DeclareMathOperator\mult{mult}
\DeclareMathOperator\End{End}
\DeclareMathOperator\cyc{cyc}
\newcommand\nc{\newcommand}
\nc\renc{\renewcommand}
\newcommand\bn{{\mathbb N}}
\newcommand\bc{{\mathbb C}}
\newcommand\br{{\mathbb R}}
\newcommand\bq{{\mathbb Q}}
\newcommand\bp{{\mathbb P}}
\newcommand\bz{{\mathbb Z}}
\newcommand\fp{{\mathfrak p}}
\newcommand\so{{\mathscr O}}
\newcommand\scv{\mathscr V}
\newcommand \ra{\rightarrow}
\newcommand \spec{\text{Spec }}
\newcommand \mg{{\mathscr M_g}}
\newcommand \ag{{\mathscr A_g}}
\newcommand \coarseag{{A_g}}
\newcommand \ug{{\mathscr U_g}}
\newcommand \trigonal[1]{\mathscr T^{#1}}
\newcommand \hyperelliptic[1]{\mathscr H_{#1}}
\newcommand \geometricprimes[1]{\mathcal T_{#1}}
\newcommand\gr[1]{\on{Gr}_{#1}}
\newcommand{\customlabel}[2]{%
   \protected@write \@auxout {}{\string \newlabel {#1}{{#2}{\thepage}{#2}{#1}{}} }%
   \hypertarget{#1}{#2}
}
\DeclareMathOperator\PGL{PGL}
\DeclareMathOperator\occ{Occ}
\DeclareMathOperator\PSp{PSp}
\DeclareMathOperator\galSlope{\sigma}
\DeclareMathOperator\galMin{\tau}
\newcommand{\goodGaloisSet}{T}
\newcommand\nonEtalePrimes{S'}
\newcommand{\N}{\mathbf{N}}
\newcommand{\gee}{(\bz/2\bz)^g \rtimes S_g}
\newcommand{\mc}{\mathcal}
\newcommand{\mf}{\mathfrak}
\newcommand{\wall}{H}
\newcommand{\mono}{H}
\newcommand{\zh}{\wh{\bz}}
\def\arraystretch{1.3}
\title[Surjectivity of Galois Representations in Families of Abelian Varieties]{Surjectivity of Galois Representations in \\ Rational Families of Abelian Varieties}
\date{\today}
\author[Aaron Landesman]{Aaron Landesman}
\author[Ashvin A. Swaminathan]{Ashvin A. Swaminathan}
\author[James Tao]{James Tao}
\author[Yujie Xu]{Yujie Xu}
\begin{document}

\begin{abstract}
	In this article, we show that for any non-isotrivial family of abelian varieties over a rational base with big monodromy, those members that have adelic Galois representation with image as large as possible form a density-$1$ subset. Our results can be applied to a number of interesting families of abelian varieties, such as rational families dominating the moduli of Jacobians of hyperelliptic curves, trigonal curves, or plane curves.
As a consequence, we prove that for any dimension $g \geq 3$, there are infinitely many abelian varieties over $\mathbb Q$ with adelic
Galois representation having image equal to all of $\operatorname{GSp}_{2g}(\widehat{\mathbb Z})$.
\end{abstract}

\subjclass[2010]{11F80, 11G10, 11G30, 11N36, 11R32, 12E25}

\maketitle

\section{Introduction and Statement of Results}
\label{section:introduction}

\subsection{Background}
One of the most significant breakthroughs in the theory of Galois representations came in 1972, when Serre proved the Open Image Theorem for elliptic curves in his seminal paper~\cite{causalrelationship}.
Serre's theorem states that for any elliptic curve $E$ over a number field $K$ without complex multiplication, the image of the associated \emph{adelic} Galois representation $\rho_E$ is an open subgroup of the general symplectic group $\GSp_2(\wh{\ZZ})$.\footnote{Recall that $\GSp_2(\wh{\ZZ})= \mathrm{GL}_2(\wh\ZZ)$; here, we prefer to use the less common symplectic notation so as to highlight the analogy between the elliptic curve case and that of higher dimensional abelian varieties.} The Open Image Theorem not only gives rise to many important \mbox{corollaries --} from the simple consequence that the image of $\rho_E$ has finite index in $\GSp_2(\wh{\ZZ})$, to the intriguing result that the density of supersingular primes of $E$ is $0$ -- but recently, within the past two decades, the theorem has also inspired a body of research concerning the following question:

\begin{question}
How large can the image of the adelic Galois representation associated to an elliptic curve be, and how often do elliptic curves attain this largest possible Galois image?
\end{question}

The first major result addressing the above question was achieved by Duke in~\cite{duke:elliptic-curves-with-no-exceptional-primes}. He proved that for  ``most'' elliptic curves $E$ over $\QQ$ in the standard family with Weierstrass equation $y^2 = x^3 + ax + b$, the image of the \emph{mod-$\ell$ reduction} of $\rho_E$ is all of $\GSp_2(\ZZ/\ell \ZZ)$ for every prime number $\ell$;
here and in what follows, ``most'' means a density-$1$ subset of curves ordered by na\"{i}ve height.
Duke's result does not imply, however, that $\rho_E$ surjects onto $\GSp_2(\wh{\ZZ})$ for most $E$. In fact, as
Serre observes in~\cite{causalrelationship}, the image of $\rho_E$ has index divisible by $2$ in $\GSp_2(\wh{\ZZ})$ for every elliptic curve $E/\QQ$. Nonetheless, Jones proves in~\cite[Theorem 4]{josofabank} that most elliptic curves $E$ in the standard family over $\QQ$ have \emph{adelic} Galois representations with image as large as possible (i.e., with index $2$ in $\GSp_2(\wh{\ZZ})$).

The obstruction to having surjective adelic Galois representation faced by elliptic curves over $\QQ$ does not occur over other number fields. In~\cite[Theorem 1.5]{greasy}, Greicius constructed the first explicit example of an elliptic curve over a number field with Galois image equal to all of $\GSp_2(\wh{\ZZ})$. Greicius' example is not the only elliptic curve with this property: in~\cite[Theorem 1.2]{zywina2010elliptic}, Zywina employs the above result of Jones to show that most elliptic curves in the standard family over a number field $K \neq \QQ$ have Galois image equal to all of $\GSp_2(\wh{\ZZ})$ as long as $K \cap \QQ^{\cyc} = \QQ$, where $\QQ^{\cyc}$ is the maximal cyclotomic extension of $\QQ$. Subsequently, in~\cite[Theorem 1.15]{zywina2010hilbert}, Zywina achieves an intriguing generalization of this result: using a variant of Hilbert's Irreducibility Theorem, he shows that most members of \emph{every} non-isotrivial rational family of elliptic curves over \emph{any} number field have Galois image as large as possible given the constraints imposed by the arithmetic and geometric properties of the family. Further results over $\mathbb Q$ were
obtained in~\cite{grant:a-formula-for-the-number-of-elliptic-curves-with-exceptional-primes},~\cite{cojocaruH:uniform-results-for-serres-theorem-for-elliptic-curves}, and~\cite{cojocaruGJ:one-parameter-families-of-elliptic-curves}
(see~\cite[p.~6]{zywina2010hilbert} for a more detailed overview).

Given that the above question is so well-studied in the context of elliptic curves, it is natural to ask whether any of the aforementioned theorems extend to abelian varieties of higher dimension. As it happens, explicit examples of curves whose Jacobians have maximal Galois image have been constructed: it follows from the results of~\cite{dooleyfat} and~\cite{seaweed} that one can algorithmically write down equations of abelian surfaces and three-folds over $\mathbb Q$ with Galois image as large as possible. 
 Moreover, there are several results showing that in a family of abelian varieties, ``most'' fibers lying over closed points of the base have Galois image with finite index in the Galois image
of the family.
For instance, in \cite{cadoret2015open}, (see also \cite{cadoret2015integral}) the author
shows that the set of fibers lying over $K$-points of the base for which the associated Galois image does \emph{not }have finite
index in that of the family is a thin set. Furthermore, in \cite{cadoretuniform-i} and \cite{cadoretuniform-ii},
the authors show that when the base of the family is a curve, the set of fibers lying over $K$-points of the base
(and more generally closed points of bounded degree)
for which the associated Galois image does \emph{not} have finite
index in that of the family is a finite set. However, we are not aware of any results in the literature describing the density of higher-dimensional abelian varieties whose adelic Galois representations 
\mbox{have maximal image} (as opposed to merely having finite index) in that of the family.

\subsection{Main Result}\label{weaintevergonnaberoyals}

The primary objective of this article is to prove that an analogue of Zywina's result for rational families of elliptic curves in~\cite[Theorem 1.15]{zywina2010hilbert} holds for abelian varieties of arbitrary dimension, subject to a mild hypothesis on the \emph{monodromy} (i.e., Galois image) of the family under consideration. Before stating our theorems, we must establish some of the requisite notation; we expatiate upon this and other important background material in Section~\ref{subsection:setup}, where precise definitions are provided.

Let $K$ be a number field with fixed algebraic closure $\ol{K}$, let $U \subset \mathbb P^r_K$ be a dense open subscheme, and let $A \rightarrow U$ be a family of $g$-dimensional principally polarized abelian varieties (henceforth, PPAVs). Let $\mono_A \subset \GSp_{2g}(\widehat{\mathbb Z})$ be the monodromy of the family
and let $\mono_{A_u} \subset \mono_A$ be the monodromy of the fiber $A_u$ over $u \in U$. Finally, to facilitate our enumeration of PPAVs, let $\on{Ht}\colon\mathbb P^r(\overline K) \rightarrow \mathbb R_{>0}$
denote the absolute multiplicative height on projective space,\footnote{See~\cite[Section B.2, p.~174]{afraidofheights} for the definition.}
and define a height function $\| - \|$ on the lattice $\mathcal O^r_K$
sending $\left( t_1, \ldots, t_r \right) \mapsto \max_{\sigma,i}|\sigma(t_i)|$,
where $\sigma$ varies over all field embeddings $\sigma\colon K \hookrightarrow \mathbb C$.
Our main result is stated as follows:
\begin{theorem} \label{theorem:main}
	Let $B, n$ be arbitrary positive real numbers, and suppose that the rational family $A \to U$ is non-isotrivial and has big monodromy, meaning that $\mono_A$ is open in $\GSp_{2g}(\zh)$. Let $\delta_\QQ$ be the index of the closure of the commutator subgroup of $\mono_A$ in $\mono_A \cap \Sp_{2g}(\zh)$, and let $\delta_K = 1$ for $K \neq \QQ$. Then $[\mono_A : \mono_{A_u}] \geq \delta_K$ for all $u \in U(K)$, and we have the following asymptotic statements:
			\[
				\frac{|\{u \in U(K) \cap \mathcal{O}_K^r : \lVert u \rVert \le B,\, [\mono_A : \mono_{A_u}] = \delta_K\}|}{|\{u \in U(K) \cap \mc{O}_K^r : \lVert u \rVert \le B\}|} = 1 + O((\log B)^{-n}), \text{ and}
			\]
\[
				\frac{|\{u \in U(K) : \on{Ht}(u) \leq B,\, [\mono_A : \mono_{A_u}] = \delta_K\}|}{|\{u \in U(K) : \on{Ht}(u) \le B\}|} = 1 + O((\log B)^{-n}),
			\]
	 where the implied constants depend only on $A \to U$ and $n$.
\end{theorem}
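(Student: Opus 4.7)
The plan is to combine a finite-level analysis of the monodromy $\mono_A$ with an effective sieve argument for counting specializations whose Galois image is smaller than the maximum allowed.

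I would first establish the lower bound $[\mono_A : \mono_{A_u}] \geq \delta_K$ for every $u \in U(K)$. The inclusion $\mono_{A_u} \subset \mono_A$, together with the identification (via the Weil pairing on the adelic Tate module of $A_u$) of the symplectic multiplier character of $\mono_{A_u}$ with the cyclotomic character of $\Gal(\ol{K}/K)$, implies for $K = \QQ$ that the image of $\mono_{A_u}$ in the abelianization $\mono_A / \overline{[\mono_A,\mono_A]}$ projects isomorphically onto the multiplier quotient $\zh^\times$. Since the kernel of this multiplier quotient has order exactly $\delta_\QQ$ by definition, the image has index $\delta_\QQ$ in $\mono_A / \overline{[\mono_A,\mono_A]}$, and hence $[\mono_A : \mono_{A_u}] \geq \delta_\QQ$. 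For $K \neq \QQ$ the claim with $\delta_K = 1$ is vacuous.

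Next, I would reduce the adelic density problem to finite-level conditions. Since $\mono_A$ is open in $\GSp_{2g}(\zh)$, there exist an integer $N_0$ and a finite set of primes $S_0$ such that $\mono_A$ is the full preimage of its mod-$N_0$ image, and for every $\ell \notin S_0$ the $\ell$-adic projection of $\mono_A$ is the full preimage of its mod-$\ell$ image, which contains $\Sp_{2g}(\FF_\ell)$. A Goursat-style lemma analogous to those used by Serre and Zywina then shows that $[\mono_A : \mono_{A_u}] = \delta_K$ if and only if (i) the mod-$N_0$ image of $\mono_{A_u}$ equals a prescribed subgroup of the mod-$N_0$ image of $\mono_A$, and (ii) for every $\ell \notin S_0$, the mod-$\ell$ image of $\mono_{A_u}$ contains $\Sp_{2g}(\FF_\ell)$. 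Any bad $u$ therefore lifts to a proper finite cover $V_\Gamma \to U$ indexed by a maximal proper subgroup $\Gamma$ of the mod-$m$ image of $\mono_A$ for some $m$ divisible by $N_0$ or by a prime $\ell \notin S_0$.

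I would then count bad $u$ by a two-step strategy. First, an effective Masser--W\"ustholz-style bound, adapted to the family $A \to U$, controls the set of ``exceptional'' primes for $A_u$: for $u$ of height $\|u\| \leq B$, the primes $\ell$ at which the mod-$\ell$ image of $\mono_{A_u}$ fails to contain $\Sp_{2g}(\FF_\ell)$ all satisfy $\ell \leq M := (\log B)^{c_0}$ for a suitable constant $c_0$. Second, for each $\ell \leq M$ and each maximal $\Gamma$, an effective Hilbert irreducibility inequality (in the spirit of Cohen, refined by Serre and Zywina) bounds the count of $u \in V_\Gamma(K)$ of bounded height; combining the Aschbacher classification of maximal subgroups of $\GSp_{2g}(\FF_\ell)$ (which bounds the number of relevant $\Gamma$ polynomially in $\ell$ and gives index $\gg \ell^{c_1}$ for some $c_1 > 0$) with a large-sieve argument yields a total saving of $(\log B)^{-n}$ for any prescribed $n$. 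The argument applies uniformly to the two heights $\|-\|$ and $\on{Ht}$, since the sieve depends only on the asymptotic count of bounded-height points in $U(K) \cap \mc O_K^r$ or $U(K)$.

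The main obstacle is assembling these pieces uniformly in $\ell$. One must (a) obtain a Masser--W\"ustholz-style bound polynomial in both the data of $A \to U$ and the height of $u$, (b) bound the degrees and geometric discriminants of the covers $V_\Gamma \to U$ polynomially in $\ell$, and (c) ensure that the effective sieve accumulates enough savings across primes to produce $(\log B)^{-n}$ for arbitrary $n$. The last point is the most delicate: one must exploit the joint distribution of mod-$\ell$ images across multiple primes via the large sieve inequality, leveraging the polynomial index lower bounds from the Aschbacher analysis rather than applying Bonferroni alone.
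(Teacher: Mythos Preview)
Your high-level architecture matches the paper's: reduce the adelic question to finite-level conditions via a Goursat argument, handle finitely many small levels by Cohen--Serre, and for the infinitely many remaining primes $\ell$ split into a ``medium'' range (effective Hilbert irreducibility) and a ``large'' range (an effective open-image bound cutting off exceptional primes at $(\log B)^{c_0}$). The paper follows exactly this outline, with Wallace's effective Hilbert irreducibility in the medium range and Lombardo's explicit surjectivity theorem in the large range.

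The genuine gap is your step (a), the ``Masser--W\"ustholz-style bound, adapted to the family.'' This is not a routine input; it is the crux of the argument, and a direct Masser--W\"ustholz approach is precisely where previous attempts broke. The paper explicitly notes that Wallace's original treatment rested on Kawamura's Masser--W\"ustholz-type claim, which is incorrect. The replacement is Lombardo's effective open-image theorem, and that theorem does \emph{not} apply unconditionally to $A_u$: it requires (1) $\End_{\ol K}(A_u)=\ZZ$ and (2) the existence of a prime $\mathfrak p$ of small norm at which the Frobenius characteristic polynomial has Galois group $(\ZZ/2\ZZ)^g\rtimes S_g$. Both hypotheses must be shown to hold for a density-$1$ set of $u$, with the failure set controlled to $O((\log B)^{-n})$. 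Hypothesis (1) is handled by a single application of Cohen--Serre, but hypothesis (2) requires a separate large-sieve argument (finding, for most $u$, a suitable $\mathfrak p$ with $\N(\mathfrak p)\le(\log B)^{n+1}$), and this is where the paper actually deploys the large sieve---not, as you propose, to aggregate bad specializations across the primes $\ell$.

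So your use of the large sieve and Aschbacher's classification to control the covers $V_\Gamma$ directly is a different mechanism from the paper's; the paper never invokes Aschbacher, and its sieve targets Frobenius splitting behavior rather than maximal subgroups of $\GSp_{2g}(\FF_\ell)$. Your scheme might be salvageable, but as written it rests on an input---uniform polynomial control of exceptional primes in terms of $\on{Ht}(u)$ alone---that is not available off the shelf and whose naive formulation has a known counterexample in the literature.
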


\begin{remark}\label{remkydoo}
	Notice that Theorem~\ref{theorem:main} holds trivially in dimension $0$. In~\cite[Theorem 1.15]{zywina2010hilbert}, where the $1$-dimensional case of Theorem~\ref{theorem:main} is treated, Zywina bounds the error more sharply, by
$O( (\log B)B^{-1/2} )$ as opposed to our bound of
$O( (\log B)^{-n} )$.
In what follows, we shall primarily restrict ourselves to the case where the dimension $g$ is at least $2$.
\end{remark}

\begin{remark}
In~\cite{scoopdedoo}, Wallace studies a variant of Theorem~\ref{theorem:main} in the $2$-dimensional case.~Unfortunately, his argument relies upon a mistaken Masser-W\"{u}stholz-type result of Kawamura,~\cite[Main Theorem 2]{ifyouseekamy}. Although Wallace describes in~\cite[p.~468]{scoopdedoo} how to correct some of the errors in Kawamura's proof, the modified argument still appears to be mistaken; see~\cite[p.~27]{lombardoGL2type} for a description of one error in Kawamura's argument that Wallace does not adequately address.
Using the result stated in Appendix~\ref{lombardstreet}, written by Davide Lombardo, we are able to patch this error in Wallace's argument.
\end{remark}

\begin{remark}
	\label{remark:}
    The locus of $u \in U(K)$ with $[\mono_A : \mono_{A_u}] > \delta_K$ will not in general be Zariski-closed, so the ``sparseness'' of this locus can only be quantified by an asymptotic statement. To see why, consider the family of
	elliptic curves over $K$
	given by \mbox{the Weierstrass equations} $y^2 = x^3 + x + a$ for $a \in K$. Note that the mod-$2$ reduction of the monodromy is nontrivial for the family but is trivial for infinitely many members of the family, namely those for which the defining polynomial $x^3 + x + a$ factors completely over $K$.
\end{remark}

We now outline the proof of Theorem~\ref{theorem:main}. Hilbert's Irreducibility Theorem is the prototype for results like Theorem~\ref{theorem:main}, but it only applies in the setting of finite groups. Indeed, the phenomenon that Galois representations associated to elliptic curves over $\bq$ \emph{never} surject onto $\GSp_2(\wh{\ZZ})$ shows that Hilbert's Irreducibility Theorem cannot hold for infinite groups. However, when $A \to U$ has \emph{big monodromy}, in the sense that $H_A$ is open in $\GSp_{2g}(\zh)$, the problem is essentially reduced to showing that, for most $u \in U(K)$, the mod-$\ell$ reduction of $H_{A_u}$ contains $\GSp_{2g}(\bz / \ell \bz)$ for each sufficiently large prime $\ell$. This reduction uses an infinite version of Goursat's lemma. Since these mod-$\ell$ reductions are \emph{finite} groups, the na\"{i}ve expectation is that Hilbert's Irreducibility Theorem can be applied once for each $\ell$. Unfortunately, the sum of the resulting error terms does not \emph{a priori} converge to zero.

To overcome this problem, we divide the primes $\ell$ into three regions.
\begin{enumerate}
	\item We handle all sufficiently large primes by means of a delicate argument involving the large sieve that allows us to apply a recent result of Lombardo (namely,~\cite[Theorem 1.2]{lombardo2015explicit} and  Proposition~\ref{prop:Main}).
	\item For the smaller primes, Wallace's effective version of the Hilbert Irreducibility Theorem gives sufficiently good error terms. His approach is to complete $\phi: U \to \spec K$ to a map $\wt{\phi} : \mc{U} \to \spec \mc{O}_K$ (see Section~\ref{subsection:notation-for-families}), and then to apply the large sieve using information gleaned from the special fibers of $\wt{\phi}$. To ensure that the monodromy maps associated to special fibers of $\wt{\phi}$ capture enough information about the monodromy of the whole family, we assume the family is non-isotrivial and has big monodromy.
Our main contribution to this step is an application of the Grothendieck Specialization Theorem, which shows that Wallace's Property~\ref{property-a2}---concerning the relation between the monodromy maps associated to a geometric \emph{special} fiber and to a geometric \emph{generic} fiber---holds in a very general setting.
	\item Lastly, to handle the finitely many primes that remain, the Cohen-Serre version of the Hilbert Irreducibility Theorem suffices.
\end{enumerate}
We encourage the reader to refer to Section~\ref{subsection:outline} for a more detailed discussion of the intricate arguments outlined above.

\begin{remark}
Note that the proof strategy outlined above
is greatly influenced by
the methods that Zywina employed in~\cite{zywina2010hilbert}
to
handle the case where $g=1$ and also by unpublished work of Zureick-Brown and Zywina.
In particular, the idea
of formulating the problem in terms of monodromy groups and solving
it by
applying effective versions of Hilbert's Irreducibility Theorem and
Serre's
Open Image Theorem is largely due to them.

Zureick-Brown and Zywina were the first to state a
version of Theorem~\ref{theorem:main}.
Indeed, in a 2013 talk at the Institute for
Advanced Study, Zywina announced that he and Zureick-Brown had
proven
a result very much like Theorem~\ref{theorem:main}
using a strategy similar to that outlined above.
Following this talk,
Deligne suggested a potential way to strengthen the result by
removing the hypothesis that the family has big monodromy, and it is our understanding that Zywina
has been attempting to remove this hypothesis by following Deligne's
suggestion and that his work is still in progress. As the details of the work of Zureick-Brown and Zywina are not available, we have worked out a modified approach that utilizes recent results of
Wallace (\cite{scoopdedoo}) and Lombardo  (\cite{lombardo2015explicit}) that
had not been published at the time of Zywina's talk. In light of the above, we would like to extend a special acknowledgment to Zureick-Brown and Zywina for formulating the questions that motivated our
work
and
for introducing the ideas that inspired our proof of
Theorem~\ref{theorem:main}.
\end{remark}

\subsection{Applications} We record a number of interesting applications of our main result. These and several further applications are stated and proven in
Theorem~\ref{corollary:examples}.
\begin{theorem}[Abbreviation of Theorem~\protect{\ref{corollary:examples}}]
	\label{corollary:abbreviated-examples}
	Let $\ag$ denote the moduli stack of $g$-dimensional PPAVs, suppose $A \to U$ is a rational family, and let $V$ be
	the smallest locally closed substack of $\ag$ through which $U \rightarrow \ag$ factors.
	The conclusion of Theorem~\ref{theorem:main} holds if $V$ is normal and contains a dense open substack of any of the following loci:
	\begin{enumerate}
		\item the substack of Jacobians of hyperelliptic curves, or
		\item the substack of Jacobians of trigonal curves, or
		\item the substack of Jacobians of plane curves of degree $d$ (see Remark~\ref{remark:locus-of-plane-curves} for a more precise description of this substack), or
		\item the substack of Jacobians of all curves in $\mg$, or
		\item the moduli stack $\ag$.
	\end{enumerate}
\end{theorem}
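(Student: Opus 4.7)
The plan is to apply Theorem~\ref{theorem:main} in each of the five cases, which requires verifying two hypotheses for $A \to U$: non-isotriviality and big monodromy. Both conditions depend only on the image of the moduli map $U \to \ag$, which is by construction dense in $V$. Thus it suffices in each case to establish these two properties for the universal PPAV over $V$, or equivalently over a dense open substack of the ambient locus $Y$ (Jacobians of hyperelliptic curves, of trigonal curves, of plane curves, all Jacobians, or all of $\ag$).

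Non-isotriviality is immediate, since each locus $Y$ has positive moduli dimension and $U \to V$ is dominant while $V$ contains a dense open of $Y$. For big monodromy, I would first establish big geometric monodromy for the universal PPAV over $Y$ itself, and then argue that restriction to a dense open substack of $Y$ preserves big monodromy, since the \'etale fundamental group of a normal scheme surjects onto that of any dense open substack up to finite index. The normality hypothesis on $V$ is precisely what is needed to carry this reduction through.

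The geometric monodromy inputs I would invoke are as follows: (i) over $\ag$, the geometric monodromy of the universal PPAV is all of $\Sp_{2g}(\zh)$, a classical result; (ii) over $\mg$, the geometric monodromy equals that over $\ag$ by dominance of the Torelli morphism; (iii) for the hyperelliptic locus, big geometric monodromy follows from A'Campo's computation of the image of the hyperelliptic mapping class group in $\Sp_{2g}(\ZZ)$; (iv) for plane curves and trigonal curves, analogous results of Beauville and others are available in the literature. To pass from big geometric monodromy to big arithmetic monodromy, I would use that the similitude character of $\mono_A$ is the cyclotomic character, which surjects onto $\zh^\times$; combined with the fact that $\mono_A \cap \Sp_{2g}(\zh)$ is open, this forces $\mono_A$ to be open in $\GSp_{2g}(\zh)$ as required.

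The main obstacle will be formulating the big-monodromy statements for the trigonal and plane curve cases in the precise form required, namely that the $\ell$-adic monodromy is open in $\Sp_{2g}(\ZZ_\ell)$ for every prime $\ell$, including small primes. These cases are less classical than $\ag$, $\mg$, or the hyperelliptic locus, and some care is needed to extract the correct statement at every level from the references.
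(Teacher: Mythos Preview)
Your overall strategy matches the paper's: reduce to showing that each ambient locus $Y$ has big (geometric) monodromy, cite the relevant literature for this, and then propagate big monodromy back to $V$ and to $U$. The references you name are essentially the ones the paper uses (A'Campo/Mumford for hyperelliptic, Beauville for plane curves, Deligne--Mumford for $\mg$); for trigonal curves the paper invokes the result of Bolognesi--L\"onne on the monodromy of $\trigonal g(M)$.

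There is, however, a genuine gap in your reduction from $U$ to $V$. You assert that big monodromy ``depends only on the image of the moduli map,'' but this is not automatic: the map $U \to V$ is dominant but not an open immersion, and its geometric generic fiber need not be connected, so $\pi_1(U) \to \pi_1(V)$ need not be surjective. What is true (and what the paper establishes as Proposition~\ref{proposition:stacky-dominant-map-surjective-fundamental-group}) is that for a dominant morphism between normal integral finite-type stacks over a characteristic-zero field, the image of $\pi_1(U)$ in $\pi_1(V)$ has \emph{finite index}; the argument passes through generic smoothness and a factorization via $\mathbb A^N_V$ and is not a triviality. This is where the normality of $V$ is actually used. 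Separately, your sentence about $\pi_1$ has the map going the wrong way: for a dense open $W$ in a normal $Y$, it is $\pi_1(W) \to \pi_1(Y)$ that is surjective (Lemma~\ref{lemma:surjectivity-criterion-pi1}), not the other direction; and this step requires normality of $Y$, not of $V$. Once these two points are made precise, your outline coincides with the paper's proof.
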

Theorem~\ref{corollary:abbreviated-examples} has the following noteworthy corollary:

\begin{corollary}
	\label{corollary:infinitely-many-maximal-image}
	For every $g > 2$, there exist infinitely many PPAVs $A$ over $\mathbb Q$
	with the property that $\rho_A(G_\mathbb Q) = \GSp_{2g}(\widehat{\mathbb Z})$.
\end{corollary}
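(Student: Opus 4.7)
The plan is to apply Theorem~\ref{corollary:abbreviated-examples} to a rational family $A \to U$ over $\QQ$ of $g$-dimensional PPAVs for which the monodromy $\mono_A$ is not merely open in $\GSp_{2g}(\zh)$ but actually equal to it. A convenient such family is the family of hyperelliptic Jacobians: take $U \subset \mathbb{P}^{2g+2}_\QQ$ to be the complement of the discriminant locus in the space parameterizing polynomials $f(x)$ of degree $2g+2$, with $A_u$ the Jacobian of $y^2 = f(x)$. The associated substack $V \subset \ag$ is the open hyperelliptic locus, which is smooth and hence normal, so the hyperelliptic case of Theorem~\ref{corollary:abbreviated-examples} makes the conclusion of Theorem~\ref{theorem:main} available for this family. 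The essential extra input is that $\mono_A = \GSp_{2g}(\zh)$: the classical monodromy computation for hyperelliptic pencils (in the style of A'Campo and Deligne--Mumford) shows the geometric monodromy is already all of $\Sp_{2g}(\zh)$, while the Weil pairing identifies the multiplier character with the cyclotomic character, which is surjective over $\QQ$; together these fill out all of $\GSp_{2g}(\zh)$.

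Next, I would check that $\delta_\QQ = 1$, which is precisely where the hypothesis $g > 2$ enters. With $\mono_A = \GSp_{2g}(\zh)$, we have $\mono_A \cap \Sp_{2g}(\zh) = \Sp_{2g}(\zh)$, so $\delta_\QQ$ equals the index in $\Sp_{2g}(\zh)$ of the closure of $[\GSp_{2g}(\zh), \GSp_{2g}(\zh)]$. A standard profinite and Chinese Remainder reduction turns this into checking that $\Sp_{2g}(\FF_\ell)$ is perfect for every prime $\ell$. For $g \geq 3$ this is well known, while for $g = 2$ the exceptional isomorphism $\Sp_4(\FF_2) \cong S_6$ supplies an abelianization of order~$2$. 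Hence $\delta_\QQ = 1$ exactly when $g > 2$.

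Combining these two ingredients with Theorem~\ref{theorem:main} produces a density-$1$ subset of $u \in U(\QQ)$ for which $\mono_{A_u} = \mono_A = \GSp_{2g}(\zh)$, i.e., $\rho_{A_u}(G_\QQ) = \GSp_{2g}(\zh)$. Since $U(\QQ)$ is Zariski-dense in $U$ this subset is infinite, and since $A \to U$ is non-isotrivial only finitely many $u$ of bounded height can correspond to a given $\ol{\QQ}$-isomorphism class of fiber, so one obtains infinitely many pairwise non-isomorphic PPAVs with the required maximal image. The main anticipated obstacle is the first step: establishing $\mono_A = \GSp_{2g}(\zh)$ on the nose rather than merely as an open subgroup is not a formal consequence of Theorem~\ref{theorem:main}, and requires invoking the classical big-monodromy theorem for hyperelliptic pencils together with the multiplier surjectivity verification.
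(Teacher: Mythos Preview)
Your argument has a genuine gap at the first and most crucial step: the hyperelliptic family does \emph{not} have monodromy equal to $\GSp_{2g}(\zh)$ when $g \geq 3$. The obstruction is at the prime $2$. For a hyperelliptic curve $y^2 = f(x)$ with $\deg f = 2g+2$, the $2$-torsion of the Jacobian is naturally identified with even-cardinality subsets of the Weierstrass points modulo complementation, and the geometric monodromy acts on $J[2]$ through the permutation action of $S_{2g+2}$ on these points. For $g = 2$ one has the exceptional isomorphism $S_6 \cong \Sp_4(\FF_2)$, but for $g \geq 3$ the map $S_{2g+2} \to \Sp_{2g}(\FF_2)$ is a proper embedding (compare orders: $|S_8| = 40320$ versus $|\Sp_6(\FF_2)| = 1451520$). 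Thus $\mono_A^{\on{geom}}(2) \subsetneq \Sp_{2g}(\FF_2)$, so $\mono_A \subsetneq \GSp_{2g}(\zh)$, and no fiber can have $\rho_{A_u}(G_\QQ) = \GSp_{2g}(\zh)$. The results of A'Campo and Mumford cited in the paper establish that the hyperelliptic monodromy is \emph{open} (contains the level-$2$ congruence subgroup), not that it is full; this is why the hyperelliptic locus is conspicuously absent from the list in Remark~\ref{remark:mmmm}.

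This is precisely why the paper instead uses the locus $\trigonal g(g \bmod 2)$ of trigonal curves of smallest Maroni invariant: by \cite{bolognesi2016mapping} (see Remark~\ref{remark:mmmm}), this locus has geometric monodromy equal to all of $\Sp_{2g}(\zh)$ for $g > 2$, and it is rational by Theorem~\ref{corollary:examples}\ref{big-maroni}, so one obtains a rational family with $\mono_A = \GSp_{2g}(\zh)$. Your computation that $\delta_\QQ = 1$ for $g \geq 3$ (via perfection of $\Sp_{2g}(\FF_\ell)$ for all $\ell$) is correct and is exactly what is needed once one has such a family, but the hyperelliptic family cannot serve as that input.
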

\begin{proof}
Let $\trigonal g(g\bmod 2) \subset \ag$ denote the locus of trigonal curves over $\mathbb Q$ of lowest Maroni invariant
(as defined at the beginning of Section~\ref{androidbeatsios}).
We have that $\trigonal g(g\bmod 2)$ is rational and normal when $g > 2$ (by
Theorem~\ref{corollary:examples}~\ref{big-maroni}) and has monodromy equal to all of $\GSp_{2g}(\widehat{\mathbb Z})$
when $g > 2$
(by Remark~\ref{remark:mmmm}).
Since $\trigonal g(g\bmod 2)$ is a dense open substack of the
locus Jacobians of trigonal curves, Theorem~\ref{corollary:abbreviated-examples}
implies that \mbox{Theorem~\ref{theorem:main}
applies to $\trigonal g(g\bmod 2)$.\qedhere}
\end{proof}

\begin{remark}
The above proof of Corollary~\ref{corollary:infinitely-many-maximal-image} is not constructive. For explicit examples of $1$-, $2$-, and $3$-dimensional PPAVs with maximal adelic Galois representations, see~\cite[Theorem 1.5]{greasy} and ~\cite[Sections 5.5.6-8]{causalrelationship},~\cite{landesman-swaminathan-tao-xu:hyperelliptic-curves}, \mbox{and~\cite[Theorem 1.1]{seaweed}, respectively.}
\end{remark}

We conclude this section with a representative example, which has incidentally enjoyed
significant discussion in the literature.

\begin{example}
	\label{example:}
	In this example, we take our family to be the Hilbert scheme $\mathscr H_4$ of plane
	curves of degree $4$ over $\mathbb Q$. There is quite a bit of earlier work concerning Galois representations associated to Jacobians of such curves. For instance, a single example of a plane quartic
	such that the adelic Galois representation associated to its Jacobian has image
	equal to $\GSp_6(\widehat{\mathbb Z})$
	is given in \cite[Theorem 1.1]{seaweed}.
	In \cite[Corollary 1.1]{anni2016residual},
	an example of a genus-$3$ hyperelliptic curve
	whose Jacobian has mod-$\ell$ monodromy equal to $\GSp_6(\mathbb Z/\ell \mathbb Z)$
	for primes $\ell \geq 3$ is constructed.
	For any $\ell \geq 13$,~\cite[Theorem 0.1]{arias2015large}
	gives
	an infinite family of $3$-dimensional PPAVs with mod-$\ell$
	monodromy equal to $\GSp_6(\mathbb Z/\ell \mathbb Z)$. All of these existence statements are subsumed by the main results of the present article: indeed, from Remark~\ref{remark:mmmm} and
	Theorem~\ref{corollary:abbreviated-examples},
	we obtain the considerably stronger statement that a density-$1$ subset of this family
	has Galois representation
	with image equal to $\GSp_6(\widehat{\mathbb Z})$.
\end{example}

The rest of this paper is organized as follows. In Section~\ref{gorilla}, we define the symplectic group and prove properties concerning its open and closed subgroups. In Section~\ref{section:background}, we introduce the basic definitions and properties associated to Galois representations of abelian varieties and families thereof. These definitions and properties are used heavily in Section~\ref{section:new-proof-of-main-theorem}, which is devoted to proving the main theorem of this article, Theorem~\ref{theorem:main}. In Section~\ref{iknewyouweretrouble}, we show that Theorem~\ref{theorem:main} can be applied to study many interesting families of PPAVs, and in so doing, we prove a result that implies Theorem~\ref{corollary:abbreviated-examples}.
Finally, in Appendix~\ref{lombardstreet}, Davide Lombardo proves a key input that we employ in Section~\ref{section:new-proof-of-main-theorem} to handle the genus-$2$ case of Theorem~\ref{theorem:main}.

\section{Definitions and Properties of Symplectic Groups} \label{gorilla}

In this section, we first detail the basic definitions and properties of symplectic groups, and we then proceed to prove a few group-theoretic lemmas that are used in our proof of the main result of this paper, Theorem~\ref{theorem:main}. The reader should feel free to proceed to Section~\ref{section:background} upon reading the statements of Propositions~\ref{theorem:adelic-surjective-subset} and~\ref{theorem:commutator-open}.

\subsection{Symplectic Groups}\label{subsection:stimpy}

Fix a commutative ring $R$, a free $R$-module $M$ of rank $2g$ for some positive integer $g$, and a non-degenerate alternating bilinear form $\langle -, - \rangle \colon M \times M \to R$. Define the {\it general symplectic group} (alternatively, the \emph{group of symplectic similitudes}) $\GSp(M)$ to be the subgroup of $\on{GL}(M)$ consisting of all $R$-automorphisms $S$ such that there exists some $m_S \in R^\times$, called the {\it multiplier} of $S$, satisfying $\langle S v, Sw \rangle = m_S \cdot \langle v, w \rangle$ for all $v, w \in M$. One readily observes that the {\it mult} map
\begin{align*}
	\mult \colon \GSp(M) & \rightarrow R^\times \\
	S & \mapsto m_S
\end{align*}
is a group homomorphism, and its kernel is the {\it symplectic group}, denoted by $\Sp(M)$.

By choosing a suitable $R$-basis for $M$, we can arrange for the corresponding matrix of the inner product $\langle -, - \rangle$ to be given by
$$\Omega_{2g} = \left[\begin{array}{c|c} 0 & \id_g \\ \hline -\id_g & 0\end{array}\right],$$
where $\id_g$ denotes the $g \times g$ identity matrix. From this choice of basis we obtain an identification $\GL(M) \simeq \GL_{2g}(R)$. We then define $\GSp_{2g}(R)$ to be the image of $\GSp(M)$ and $\Sp_{2g}(R)$ to be the image of $\Sp(M)$ under this identification. Let $\det \colon \GL_{2g}(R) \to R^\times$ be the determinant map. Since the diagram
\begin{center}
\begin{tikzcd}
\GSp(M) \arrow{r}{\sim} \arrow[swap]{rd}{\on{mult}^g} &  \GSp_{2g}(R) \arrow{d}{\on{det}} \\
& R^\times
\end{tikzcd}
\end{center}

\noindent commutes, where the diagonal map is the multiplier map raised to the $g^{\mathrm{th}}$ power, one deduces that $\GSp_{2g}(R)$ is in fact the subgroup of $\GL_{2g}(R)$ consisting of all invertible matrices $S$ satisfying $S^T \Omega_{2g} S = (\on{mult} S) \, \Omega_{2g}$ and that $\Sp_{2g}(R) = \ker(\on{mult} \colon \GSp_{2g}(R) \to R^\times)$.

Let $\on{Mat}_{2g \times 2g}(R)$ denote the space of $2g \times 2g$ matrices with entries in $R$. In subsequent subsections, we will make heavy use of the ``Lie algebra'' $\mf{sp}_{2g}(R)$, which is defined by
		\begin{align*}
			\mf{sp}_{2g}(R) &\defeq \{M \in \on{Mat}_{2g \times 2g}(R) : M^T \Omega_{2g} + \Omega_{2g} M = 0 \}.
		\end{align*}
		It is easy to see that $M^T \Omega_{2g} + \Omega_{2g}M = 0$ is equivalent to $M$ being a block matrix with $g \times g$ blocks of the form
		\[
			M = \left[\begin{array}{c|c} A & B \\ \hline C & -A^T \end{array}\right],
		\]
		where $B$ and $C$ are symmetric.

For the purpose of studying Galois representations associated to PPAVs, we will be primarily interested in the cases where the ring $R$ is the profinite completion $\wh{\ZZ}$ of $\ZZ$, the ring of $\ell$-adic integers $\ZZ_{\ell}$ for a prime number $\ell$, or the finite cyclic ring $\ZZ / m \ZZ$ for a positive integer $m$.
Note in particular that we have the identifications
\begin{equation}\label{orientation1}
\GSp_{2g}(\ZZ_\ell)   \simeq  \varprojlim_k \GSp_{2g}(\ZZ/ \ell^k \ZZ) \quad \text{and}
\end{equation}
\begin{equation}\label{orientation2}
\prod_{\text{prime } \ell} \GSp_{2g}(\ZZ_\ell) \simeq  \GSp_{2g}(\wh{\ZZ}) \simeq  \varprojlim_m \GSp_{2g}(\ZZ / m \ZZ).
\end{equation}
From~\eqref{orientation1} and~\eqref{orientation2}, we obtain the $\ell$-adic projection map $\pi_\ell \colon \GSp_{2g}(\wh{\ZZ}) \twoheadrightarrow \GSp_{2g}(\ZZ_\ell)$ and the mod-$m$ reduction map $r_m \colon \GSp_{2g}(\wh{\ZZ}) \twoheadrightarrow \GSp_{2g}(\ZZ / m \ZZ)$. Observe that~\eqref{orientation1} and~\eqref{orientation2} both hold with $\GSp_{2g}$ replaced by $\Sp_{2g}$.

\subsection{Notation}
\label{subsection:notation}

In what follows, we study subquotients of $\Sp_{2g}(\wh{\ZZ})$, $\Sp_{2g}(\ZZ_\ell)$, and $\Sp_{2g}(\ZZ/ \ell^k \ZZ)$ for $\ell$ a prime number and $k$ a positive integer. We use the following notational conventions:
\begin{itemize}
\item Let $H\subset \Sp_{2g}(\wh{\ZZ})$ be a closed subgroup.
\item Let $H_{\ell} \defeq \pi_\ell(H) \subset \Sp_{2g}(\ZZ_\ell)$ be the $\ell$-adic reduction of $H$. More generally, for any set $S$ of prime numbers, let $H_S$ denote the projection of $H$ onto $\prod_{\ell \in S} \Sp_{2g}(\wh{\ZZ})$.
\item Let $H(m) = r_{m}(H) \subset \Sp_{2g}(\ZZ/m \ZZ)$ be the mod-$m$ reduction of $H$. We often take $m = \ell^k$.
\item Let $\Gamma_{\ell^k} = \ker(\Sp_{2g}(\ZZ_\ell) \to \Sp_{2g}(\ZZ/ \ell^k \ZZ))$. Notice that the map $M \mapsto \id_{2g} + \ell^k M$ gives an isomorphism of groups
$$\mf{sp}_{2g}(\ZZ/ \ell \ZZ) \simeq \ker(\Sp_{2g}(\ZZ/\ell^{k+1} \ZZ) \to \Sp_{2g}(\ZZ/ \ell^k \ZZ))$$
for every $k \geq 1$, so we will use $\mf{sp}_{2g}(\mathbb Z/\ell\ZZ)$ to denote the above kernel.
\item For any group $G$, let $[G,G]$ be its commutator subgroup, and let $G^{\ab} \defeq G/{ {[G,G]}}$ be its abelianization.
\item For any group $G$, let $\quo(G)$ the set of isomorphism classes of finite non-abelian simple quotients of $G$, and let $\occ(G)$ be the set of isomorphism classes of finite non-abelian simple \emph{sub}quotients of $G$.
\item For any positive integer $m$, let $S_m$ denote the symmetric group on $m$ letters.
\end{itemize}

\subsection{Generalizing Goursat's Lemma}

In Sections~\ref{subsection:closed-subgroups} and~\ref{subsection:open-subgroups}, it will be crucial for us to have a theorem that allows us to express a subgroup of $\Sp_{2g}(\wh{\ZZ})$ as (roughly) the product of its $\ell$-adic projections. A natural tool for doing this is Goursat's lemma, but in much of the literature (e.g.,~\cite[Lemma 5.2.1]{ribbit} and~\cite[Lemma A.4]{zywina2010elliptic}), this result is stated for \emph{finite} products or for \emph{finite} groups. This section is devoted to proving Lemma~\ref{theorem:goursat}, which generalizes Goursat's lemma to apply in the setting that we need, namely for \emph{countable} products of \emph{profinite} groups.
	
	\begin{lemma} \label{lemma:product-quotient}
		Let $G = \prod_{i=1}^n G_i$ be a product of profinite groups. Then every finite simple quotient of $G$ is a finite simple quotient of $G_i$ for some $i$, and vice versa.
	\end{lemma}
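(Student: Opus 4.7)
The plan is to handle the two directions of the lemma separately, with the reverse direction being essentially immediate and the forward direction being the substantive part.

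For the ``vice versa'' direction, suppose $Q$ is a finite simple quotient of some $G_i$, witnessed by a surjection $\psi\colon G_i \twoheadrightarrow Q$. Composing with the projection $\pi_i\colon G \twoheadrightarrow G_i$ yields a surjection $\psi\circ\pi_i\colon G \twoheadrightarrow Q$, realizing $Q$ as a finite simple quotient of $G$.

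For the forward direction, let $\phi\colon G \twoheadrightarrow Q$ be a (continuous) surjection onto a finite simple group $Q$. For each $i$, let $\iota_i\colon G_i \hookrightarrow G$ be the inclusion as the $i$-th factor (with $1$'s elsewhere), and set $H_i \defeq \phi(\iota_i(G_i)) \subset Q$. I would record two structural facts: first, for $i \neq j$, every element of $\iota_i(G_i)$ commutes with every element of $\iota_j(G_j)$ inside $G$, so the subgroups $H_i$ and $H_j$ commute element-wise inside $Q$; second, the subgroups $\iota_1(G_1), \ldots, \iota_n(G_n)$ together generate a dense subgroup of $G$, and since $\phi$ is continuous and $Q$ is discrete, this forces $H_1 H_2 \cdots H_n = Q$.

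From these two facts, each $H_i$ is normal in $Q$: the subgroup $H_i$ is normalized by itself and centralized by all of the $H_j$ with $j\neq i$, and together these generate $Q$. Since $Q$ is simple and nontrivial, each $H_i$ is either trivial or equal to $Q$; and since their product is $Q \neq 1$, at least one $H_j$ must equal $Q$. Then $\phi\circ\iota_j\colon G_j \twoheadrightarrow Q$ exhibits $Q$ as a finite simple quotient of $G_j$, completing the proof. The only potential obstacle is ensuring that the algebraic argument really applies in the profinite setting, but this is handled by noting that we use $\phi$ only through its image on the dense subgroup generated by the $\iota_i(G_i)$, together with the fact that $Q$ is discrete so that continuity of $\phi$ converts density of this subgroup into equality of images.
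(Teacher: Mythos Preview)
Your proof is correct and follows essentially the same approach as the paper: show that each $\phi(\iota_i(G_i))$ is normal in $Q$, invoke simplicity to conclude it is trivial or all of $Q$, and argue that not all can be trivial. The only minor difference is that the paper obtains normality of $\phi(\iota_i(G_i))$ directly from the fact that $\iota_i(G_i)$ is normal in $G$, whereas you deduce it from the pairwise commutation of the factors; and the paper's ``not all trivial'' step is phrased as $\ker\phi = G$ rather than $H_1\cdots H_n = Q$, but these are equivalent.
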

	\begin{proof}
		Consider a finite simple quotient $\phi: G \twoheadrightarrow H$. Since each $G_i \subset G$ is normal, the image $\phi(G_i) \subset H$ is also normal. For any $i$, if $\phi(G_i)$ is larger than $\{1\}$, then it equals $H$ since $H$ is simple, and the composition
		\(
			G_i \hookrightarrow G \twoheadrightarrow H
		\)
		expresses $H$ as a quotient of $G_i$. If no such $i$ exists, then $\ker \phi = G$, contradiction. The ``vice versa'' statement is obvious.
	\end{proof}
	
	\begin{lemma}[Generalized Goursat's Lemma] \label{theorem:goursat}
		Let $A$ be a countable set, and suppose $\{G_\alpha\}_{\alpha \in A}$ is a collection of profinite groups such that, for all pairs $\alpha, \beta \in A$ with $\alpha \neq \beta$, the groups $G_\alpha$ and $G_\beta$ have no finite simple quotients in common. Let $G := \prod_{\alpha \in A} G_\alpha$, and let $\pi_\alpha : G \to G_\alpha$ be the natural projections. If $H \subset G$ is a closed subgroup with $\pi_\alpha(H) = G_\alpha$ for all $\alpha \in A$, then $H = G$.
	\end{lemma}
	\begin{proof}
		First take $A = \{1, 2\}$, so that $G = G_1 \times G_2$. The subgroup
		\(
			N_1 \times \{1\} \defeq (G_1 \times \{1\}) \cap H \subset G
		\)
		is normal because $\pi_1(H) = G_1$. This means $N_1$ is a normal subgroup of $G_1$. Similarly for the subgroup $\{1\} \times N_2$. With these definitions, the closed subgroup
		\(
			H / (N_1 \times N_2) \subset (G_1 / N_1) \times (G_2 / N_2)
		\)
		surjects onto each factor via the natural projections. We have thereby reduced to the case $N_1 = N_2 = 0$. By \cite[Lemma 5.2.1]{ribbit}, we know that $G_1 \simeq G_2$ as profinite groups. The result follows because two isomorphic profinite groups have a nontrivial finite simple quotient in common (and any quotient of $G_i / N_i$ is \emph{a priori} a quotient of $G_i$).
		
		Now take $A = \{1, 2, \ldots, n\}$ for $n \ge 3$, and suppose (by induction) that the result has been proven for $n-1$. For any $H \subset G = \prod_{i=1}^n G_i$ satisfying the hypotheses of the theorem, let $H'$ be the image of $H$ under the projection $G \twoheadrightarrow \prod_{i=1}^{n-1} G_i$. Then $H'$ satisfies the hypotheses for $n-1$, so we conclude that $H' = \prod_{i=1}^{n-1} G_i$. By Lemma~\ref{lemma:product-quotient}, the groups $\prod_{i=1}^{n-1} G_i$ and $G_n$ have no finite simple quotients in common, so the $n = 2$ case tells us that $H = G$.
		
		The only remaining case is $A = \{1, 2, \ldots\}$. Consider $H \subset G$ satisfying the hypotheses of the theorem. For each $n$, let $H_{\{1,2,\dots,n\}}$ be the image of $H$ under the projection $G \twoheadrightarrow \prod_{i=1}^{n} G_i$. By the finite case prove above, we know that $H_{\{1,2,\dots,n\}} = \prod_{i=1}^n G_i$ for each $n \ge 1$. Fix an element $g \defeq (g_i)_{i \ge 1} \subset G$, and define a sequence $\{h_1, h_2, \ldots\}$ of elements of $H$ as follows: let $h_n$ be any element of $H$ whose image in $\prod_{i=1}^n G_i$ equals $(g_1, \ldots, g_n)$. In the product topology, $h_n \to g$ as $n \to \infty$, so $g \in H$ since $H$ is closed. Since $g \in G$ was arbitrary, \mbox{we conclude that $H = G$.}
	\end{proof}

\subsection{Closed Subgroups of $\Sp_{2g}(\wh{\ZZ})$}
\label{subsection:closed-subgroups}
	
	As before, let $H \subset \Sp_{2g}(\zh)$ be a closed subgroup. The main result of this section is Proposition~\ref{theorem:adelic-surjective-subset}, which shows that properties of $H$ can be deduced from corresponding properties of the $\ell$-adic projections $H_\ell \subset \Sp_{2g}(\bz_\ell)$ as $\ell$ ranges over the prime numbers.
	We use Proposition~\ref{theorem:adelic-surjective-subset} crucially in our proof of the main theorem, Theorem~\ref{theorem:main},
	and more specifically in the proof of Proposition~\ref{lemma:ab-cyc}.


	The next lemma enables us to verify the conditions required for applying Lemma~\ref{theorem:goursat}:
	\begin{lemma}
		\label{lemma:simple-quotients-of-symplectic-group}
       If $g > 2$ or $\ell > 2$, we have $\quo(\Sp_{2g}(\ZZ_\ell)) = \{\on{PSp}_{2g}(\ZZ/ \ell \ZZ)\}$. Moreover, for all $g \geq 2$, we have $\quo(\Sp_{2g}(\mathbb Z_\ell)) \cap \quo(\Sp_{2g}(\mathbb Z_{\ell'})) = \varnothing$ if $\ell \neq \ell'$.
	\end{lemma}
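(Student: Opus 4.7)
The statement has two parts, the identification of $\quo(\Sp_{2g}(\ZZ_\ell))$ and the disjointness across distinct primes, and I would handle each in turn.

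For the first part, my plan is to reduce in two stages: first from $\Sp_{2g}(\ZZ_\ell)$ to $\Sp_{2g}(\FF_\ell)$, and then from the latter to $\PSp_{2g}(\FF_\ell)$. The kernel $\Gamma_\ell = \ker(\Sp_{2g}(\ZZ_\ell)\twoheadrightarrow \Sp_{2g}(\FF_\ell))$ is pro-$\ell$, since its successive quotients $\Gamma_{\ell^k}/\Gamma_{\ell^{k+1}}$ are each isomorphic to the abelian group $\mf{sp}_{2g}(\FF_\ell)$, as recorded in Section~\ref{subsection:notation}. Given any finite non-abelian simple quotient $\phi\colon \Sp_{2g}(\ZZ_\ell)\twoheadrightarrow Q$, the image $\phi(\Gamma_\ell)$ is therefore a finite $\ell$-group which is normal in $Q$; a finite non-abelian simple group admits no nontrivial normal $\ell$-subgroup (any such would be solvable), so $\phi(\Gamma_\ell)$ is trivial and $\phi$ factors through $\Sp_{2g}(\FF_\ell)$. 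Under the hypothesis $g>2$ or $\ell>2$, the quotient $\PSp_{2g}(\FF_\ell) = \Sp_{2g}(\FF_\ell)/\{\pm I\}$ is a classical non-abelian finite simple group, a fact I would cite. Since the center $\{\pm I\}$ must map into the (trivial) center of $Q$, the map factors further through $\PSp_{2g}(\FF_\ell)$, and simplicity of the latter forces $Q\cong \PSp_{2g}(\FF_\ell)$.

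For the disjointness statement, I would first dispose of the boundary case $(g,\ell) = (2,2)$ separately: $\Sp_4(\FF_2)\cong S_6$ has normal subgroups only $\{1\}, A_6, S_6$, so its quotients are $S_6$, $\ZZ/2\ZZ$, or trivial, none of which is non-abelian simple; together with the pro-$2$ reduction above, this gives $\quo(\Sp_4(\ZZ_2)) = \varnothing$, so the intersection with any other $\quo(\Sp_4(\ZZ_{\ell'}))$ is automatically empty. In all remaining cases, the first part of the lemma reduces the claim to showing $\PSp_{2g}(\FF_\ell)\not\cong \PSp_{2g}(\FF_{\ell'})$ for distinct primes. My approach would be an order comparison: the polynomial $f(q) = q^{g^2}\prod_{i=1}^g (q^{2i}-1)$ is strictly increasing in $q\geq 2$ (each factor is positive and strictly increasing), and $|\PSp_{2g}(\FF_q)| = f(q)/\gcd(2,q-1)$; a short numerical check, needed only when one of the primes equals $2$, confirms that the factor of $2$ in the denominator does not produce coincidences, so $|\PSp_{2g}(\FF_\ell)|\neq |\PSp_{2g}(\FF_{\ell'})|$ whenever $\ell\neq \ell'$.

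The main obstacle is really just careful bookkeeping. The two formal steps in the first part (pro-$\ell$ reduction and quotienting by the center) go through cleanly once one accepts the known simplicity of $\PSp_{2g}(\FF_\ell)$, and the order comparison in the second part is elementary provided one tracks the parity of $q$. As an alternative that avoids the small-case numerics entirely, one can invoke Artin's classical determination of coincidences of orders among finite simple groups of Lie type, which rules out any nontrivial collision within the $\PSp_{2g}$-family at distinct primes for $g\geq 2$.
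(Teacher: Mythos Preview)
Your proof is correct and follows essentially the same approach as the paper: both arguments reduce via the pro-$\ell$ kernel $\Gamma_\ell$ to $\Sp_{2g}(\FF_\ell)$, pass to $\PSp_{2g}(\FF_\ell)$ via the center, invoke simplicity of $\PSp_{2g}(\FF_\ell)$, and then separate primes by comparing orders. The only cosmetic difference is the boundary case $(g,\ell)=(2,2)$: you observe directly that $\quo(\Sp_4(\FF_2))=\quo(S_6)=\varnothing$, whereas the paper instead notes that $|\PSp_4(\FF_{\ell'})|>|\Sp_4(\FF_2)|$ for $\ell'>2$, so $\PSp_4(\FF_{\ell'})$ cannot be a quotient of $\Sp_4(\FF_2)$.
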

	\begin{proof}
		Since $\Gamma_\ell$ is a pro-$\ell$ group, we have that
		$\quo (\Sp_{2g}(\mathbb Z_\ell)) = \quo(\Sp_{2g}(\mathbb Z/ \ell \ZZ))$.
		Furthermore, quotienting by $\{\pm \id_{2g}\}$, we have that
		$\quo(\Sp_{2g}(\mathbb Z/ \ell \ZZ)) = \quo(\Sp_{2g}(\mathbb Z/ \ell \ZZ)/\left\{ \pm \id_{2g} \right\})$. By~\cite[Theorem 3.4.1]{omeara1978symplectic}, we have $\Sp_{2g}(\mathbb Z/\ell \ZZ)/\left\{ \pm \id_{2g} \right\} = \PSp_{2g}(\ZZ/ \ell \ZZ)$ is simple for $g > 2$ or $\ell > 2$. It follows that $\quo(\Sp_{2g}(\ZZ_\ell)) = \{\on{PSp}_{2g}(\ZZ/ \ell \ZZ)\}$ in this case.

To finish the proof, note that $\quo(\Sp_{2g}(\mathbb Z_\ell)) \cap \quo(\Sp_{2g}(\mathbb Z_{\ell'})) = \varnothing$ for $g > 2$ or $\ell, \ell' > 2$ because $\PSp_{2g}(\bz / \ell \bz) \neq \PSp_{2g}(\bz / \ell' \bz)$ for $\ell \neq \ell'$ because their orders are different. The only remaining case is where $g = 2$, $\ell = 2$, and $\ell' > 2$. In this case, observe that $\PSp_{2g}(\bz / \ell' \ZZ) \notin \quo( \Sp_{2g}(\bz / 2 \ZZ))$ for $\ell' > 2$, since the order of $\PSp_{2g}(\bz / \ell \ZZ)$ exceeds that of $\Sp_{2g}(\bz / 2 \ZZ)$.
	\end{proof}
We next prove 
Proposition~\ref{theorem:truncate},
which we then use to deduce the main result of this section, Proposition~\ref{theorem:adelic-surjective-subset}.

		\begin{proposition}
		\label{theorem:truncate}
		Let $g \geq 2$ and let $H \subset \Sp_{2g}(\widehat {\mathbb Z})$ be a closed subgroup. Suppose there is a prime number $p \ge 2$ so that $H(\ell) = \Sp_{2g}(\ZZ/ \ell \ZZ)$ for all $\ell > p$. Then we have that
		\begin{equation}\label{meanttofly}
		H = H_{\{\ell \le p\}} \times \prod_{\ell > p} \Sp_{2g}(\ZZ_\ell).
		\end{equation}
	\end{proposition}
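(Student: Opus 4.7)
The plan is to decompose $\Sp_{2g}(\wh{\ZZ}) = G_{\le p} \times G_{>p}$ with $G_{\le p} \defeq \prod_{\ell \le p}\Sp_{2g}(\ZZ_\ell)$ and $G_{>p} \defeq \prod_{\ell > p}\Sp_{2g}(\ZZ_\ell)$, and prove~\eqref{meanttofly} in two Goursat-style stages: first show $H_{\{\ell > p\}} = G_{>p}$ using Theorem~\ref{theorem:goursat} (infinite Goursat), then bootstrap to the full product structure via a two-factor Goursat combined with a simple-subquotient argument.

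For the first stage, I would upgrade the mod-$\ell$ hypothesis $H(\ell) = \Sp_{2g}(\ZZ/\ell\ZZ)$ to the full $\ell$-adic statement $H_\ell = \Sp_{2g}(\ZZ_\ell)$ for each $\ell > p$. Since $H_\ell$ is closed and surjects onto $\Sp_{2g}(\ZZ/\ell\ZZ)$, this amounts to showing $\Gamma_\ell \subset H_\ell$, which follows from the fact that $\Gamma_\ell$ lies in the Frattini subgroup of $\Sp_{2g}(\ZZ_\ell)$: this is an application of~\cite[Proposition~10.6]{serre1989lectures} (already used in the proof of Proposition~\ref{theorem:adelic-surjective-subset}) combined with the topological perfectness of $\Sp_{2g}(\ZZ_\ell)$ for $g \ge 2$ and $\ell \ge 3$. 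With each $H_\ell = \Sp_{2g}(\ZZ_\ell)$ in hand, Theorem~\ref{theorem:goursat} applied to the inclusion $H_{\{\ell > p\}} \hookrightarrow G_{>p}$ yields $H_{\{\ell > p\}} = G_{>p}$: the pairwise disjointness of the finite simple quotient sets $\quo(\Sp_{2g}(\ZZ_\ell))$ required by Goursat is precisely Lemma~\ref{lemma:simple-quotients-of-symplectic-group}, applicable since $\ell > p \ge 2$ forces $\ell \ge 3$.

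For the second stage, view $H \subset H_{\{\ell \le p\}} \times G_{>p}$ with both projections surjective, and apply the classical two-factor Goursat to obtain closed normal subgroups $N_1 \triangleleft H_{\{\ell \le p\}}$ and $N_2 \triangleleft G_{>p}$ with $H_{\{\ell \le p\}}/N_1 \cong G_{>p}/N_2$; the desired~\eqref{meanttofly} is equivalent to $N_2 = G_{>p}$. If $N_2$ were proper, then the topological perfectness of each $\Sp_{2g}(\ZZ_\ell)$ for $\ell \ge 3$ would make the abelianization of $G_{>p}$ trivial and thereby force $G_{>p}/N_2$ to admit a non-abelian finite simple quotient, which by Lemmas~\ref{lemma:product-quotient} and~\ref{lemma:simple-quotients-of-symplectic-group} must be $\PSp_{2g}(\ZZ/\ell\ZZ)$ for some $\ell > p$. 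This same simple group then appears as a non-abelian finite simple subquotient of the finite product $\prod_{\ell' \le p}\Sp_{2g}(\ZZ_{\ell'})$, and a routine one-factor-at-a-time induction (the two-factor case being handled by looking at the intersection with the kernel of a projection) combined with the pro-$\ell'$-ness (hence prosolvability) of each $\Gamma_{\ell'}$ forces $\PSp_{2g}(\ZZ/\ell\ZZ)$ to be a subquotient of $\Sp_{2g}(\ZZ/\ell'\ZZ)$ for some prime $\ell' \le p < \ell$.

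The main obstacle is then ruling out this cross-characteristic subquotient relation, for which a crude order comparison suffices. For distinct primes $\ell > \ell' \ge 2$ with $g \ge 2$ one has $\ell/\ell' \ge 3/2$ and $2g^2+g \ge 10$, and a direct estimate of the orders (using that $|\Sp_{2g}(\ZZ/\ell\ZZ)| = \ell^{g^2}\prod_{i=1}^g (\ell^{2i}-1)$ grows roughly as $\ell^{2g^2+g}$) yields $|\PSp_{2g}(\ZZ/\ell\ZZ)| > |\Sp_{2g}(\ZZ/\ell'\ZZ)|$, directly contradicting the existence of the claimed subquotient. This contradiction forces $N_2 = G_{>p}$ and completes the proof.
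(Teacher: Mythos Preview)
Your proposal is correct and follows essentially the same route as the paper: lift each mod-$\ell$ surjectivity to $\ell$-adic surjectivity, then apply Goursat together with Lemma~\ref{lemma:simple-quotients-of-symplectic-group} and an order comparison to rule out any common non-abelian simple (sub)quotient between the small-prime and large-prime parts. The only differences are cosmetic: the paper applies Theorem~\ref{theorem:goursat} in a single pass (treating $H_{\{\ell \le p\}}$ as one factor alongside all the $\Sp_{2g}(\ZZ_\ell)$ for $\ell > p$) rather than in your two stages, and it obtains the lifting step by direct citation of~\cite[Theorem~1]{landesman-swaminathan-tao-xu:lifting-symplectic-group} rather than via your Frattini sketch.
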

\subsubsection*{Idea of Proof}
The idea of the proof is to apply Lemma~\ref{theorem:goursat} to conclude that if the group surjects onto each factor,
then it surjects onto the product.
We verify the hypotheses of Lemma~\ref{theorem:goursat}
using Lemma~\ref{lemma:simple-quotients-of-symplectic-group} and the fact that all simple quotients of $H_{\{\ell \leq p\}}$
have smaller order than $\PSp_{2g}(\mathbb Z_\ell)$ for $\ell > p$.
	\begin{proof}
		The case where $g = 1$ is handled by~\cite[Lemma 7.6]{zywina2010hilbert}, so take $g \geq 2$. By~\cite[Theorem 1]{landesman-swaminathan-tao-xu:lifting-symplectic-group}, the fact that $H(\ell) = \Sp_{2g}(\ZZ/ \ell \ZZ)$ implies that $H_\ell = \Sp_{2g}(\ZZ_\ell)$ for all $\ell > p$.

  The proposition follows upon applying Lemma~\ref{theorem:goursat} to the product $H_{\{\ell \le p\}} \times \prod_{\ell > p} \Sp_{2g}(\ZZ_\ell)$. However, to apply it, we must check that no two of the groups $H_{\{\ell \leq p\}}$ and $\Sp_{2g}(\ZZ_\ell)$ for $\ell > p$ have any finite simple quotients in common. From~\cite[Proposition 1, part (a)]{landesman-swaminathan-tao-xu:lifting-symplectic-group}, we have that the group $\Sp_{2g}(\ZZ_\ell)$ has trivial abelianization for $\ell > 2$ and thus has no finite abelian simple quotients. Thus, it remains to verify that the sets of nonabelian simple quotients $\quo(H_{\{\ell \le p\}})$ and $\quo(\Sp_{2g}(\ZZ_\ell))$ for $\ell > p$ are all pairwise disjoint. 
Our strategy for checking this condition is to bound the sizes of the groups appearing in $\quo(H_{\{\ell \le p\}})$. First, observe that
		\[
			\quo(H_{\{\ell \le p\}}) \subset \occ\left( \prod_{\ell \le p} \Sp_{2g}(\bz_\ell)\right) = \bigcup_{\ell \le p} \occ(\Sp_{2g}(\bz_\ell)),
		\]
		where the last step follows from the first displayed equation of~\cite[p.\ IV-25]{serre1989abelian}.
But
\(
			\occ(\Sp_{2g}(\bz_\ell)) = \occ(\Gamma_\ell) \cup \occ(\Sp_{2g}(\bz / \ell \ZZ))
		\),
and $\occ(\Gamma_\ell) = \varnothing$ because $\Gamma_\ell$ is a pro-$\ell$ group, so
		\(
			\occ(\Sp_{2g}(\bz_\ell)) = \occ(\Sp_{2g}(\bz / \ell \ZZ))
		\).
		Because $\Sp_{2g}(\bz / \ell \ZZ)$ is not simple, every element of $\occ(\Sp_{2g}(\bz / \ell \ZZ))$ is bounded in size by $|\Sp_{2g}(\bz / \ell \ZZ)| / 2$, so every element of $\quo(H_{\{\ell \le p\}})$ is bounded in size by $|\Sp_{2g}(\bz / p \ZZ)| / 2$. Observing that
		\[
			\frac{1}{2} \cdot |\Sp_{2g}(\bz / p \bz)| < |\PSp_{2g}(\bz / \ell \bz)|
		\]
		for every $\ell > p$, the desired condition follows by applying Lemma~\ref{lemma:simple-quotients-of-symplectic-group}.
	\end{proof}

\begin{proposition}
		\label{theorem:adelic-surjective-subset}
		Let $G \subset \Sp_{2g}(\zh)$ be an open subgroup. There exists a positive integer $M$ such that, for every closed subgroup $H \subset G$, we have $H = G$ if and only if $H(M) = G(M)$ and $H(\ell) = \Sp_{2g}(\bz / \ell \ZZ)$ for every prime $\ell \nmid M$.
	\end{proposition}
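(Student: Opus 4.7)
The ``only if'' direction is essentially immediate: if $H = G$, then $H(M) = G(M)$ for any $M$, and since $G$ is open in $\Sp_{2g}(\widehat{\mathbb{Z}})$, the set of primes $\ell$ with $G_\ell \neq \Sp_{2g}(\mathbb{Z}_\ell)$ is finite, so any $M$ divisible by all such primes automatically gives $H(\ell) = G(\ell) = \Sp_{2g}(\mathbb{Z}/\ell\mathbb{Z})$ for every $\ell \nmid M$.

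For the ``if'' direction, the plan is to choose $M$ carefully and then invoke Proposition~\ref{theorem:truncate} to reduce to a Goursat-type argument. Since $G$ is open, there exists an integer $M_0$ with $G$ equal to the preimage of $G(M_0)$ under the mod-$M_0$ reduction, yielding a product decomposition $G = G_{S_0} \times \prod_{\ell \notin S_0} \Sp_{2g}(\mathbb{Z}_\ell)$, where $S_0$ is the set of primes dividing $M_0$. I will then choose a bound $C$ so that for every prime $\ell > C$ both Lemma~\ref{lemma:simple-quotients-of-symplectic-group} applies (yielding $\quo(\Sp_{2g}(\mathbb{Z}_\ell)) = \{\PSp_{2g}(\mathbb{Z}/\ell\mathbb{Z})\}$, pairwise distinct across $\ell$) and Proposition~\ref{theorem:truncate} lifts mod-$\ell$ surjectivity onto $\Sp_{2g}(\mathbb{Z}/\ell\mathbb{Z})$ to $\ell$-adic surjectivity onto $\Sp_{2g}(\mathbb{Z}_\ell)$. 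Finally, set $M = M_0 \cdot \prod_{\ell \leq C,\, \ell \notin S_0} \ell^{k_\ell}$, where the exponents $k_\ell$ are chosen large enough that the small-prime form of Proposition~\ref{theorem:truncate} forces any closed subgroup $H_\ell \subset G_\ell$ with $H_\ell(\ell^{k_\ell}) = G_\ell(\ell^{k_\ell})$ to equal $G_\ell$.

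Given $H \subset G$ closed with $H(M) = G(M)$ and $H(\ell) = \Sp_{2g}(\mathbb{Z}/\ell\mathbb{Z})$ for every $\ell \nmid M$, I first verify $H_\ell = G_\ell$ for every prime $\ell$: for $\ell \nmid M$, Proposition~\ref{theorem:truncate} applied to $H(\ell) = \Sp_{2g}(\mathbb{Z}/\ell\mathbb{Z})$ yields $H_\ell = \Sp_{2g}(\mathbb{Z}_\ell) = G_\ell$; for $\ell \mid M$, the equality $H(\ell^{v_\ell(M)}) = G(\ell^{v_\ell(M)})$ implied by $H(M) = G(M)$, combined with the choice of $k_\ell$, gives $H_\ell = G_\ell$. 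Next, I apply the generalized Goursat lemma (Theorem~\ref{theorem:goursat}) to the image of $H$ in $\prod_{\ell \nmid M} \Sp_{2g}(\mathbb{Z}_\ell)$; the factors have pairwise disjoint non-abelian simple quotients by Lemma~\ref{lemma:simple-quotients-of-symplectic-group}, and for $\ell > C$ these groups are topologically perfect, so they share no abelian simple quotients either. Thus $H$ surjects onto $\prod_{\ell \nmid M} \Sp_{2g}(\mathbb{Z}_\ell)$. Combined with the decomposition of $G$ and the surjection onto the remaining factor coming from $H(M) = G(M)$, a final application of Theorem~\ref{theorem:goursat} yields $H = G$.

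The main obstacle is the delicate bookkeeping at small primes $\ell \leq C$, where Proposition~\ref{theorem:truncate} must be invoked in a form handling primes at which mod-$\ell$ data alone is insufficient (so that genuinely higher-power congruence information is needed), and where one must rule out coincidences among the finite simple quotients of the various $\Sp_{2g}(\mathbb{Z}_\ell)$ when applying Goursat. In effect, Proposition~\ref{theorem:truncate} does the heavy lifting by converting the finite-level hypotheses into local $\ell$-adic identities $H_\ell = G_\ell$, after which Lemma~\ref{lemma:simple-quotients-of-symplectic-group} and Theorem~\ref{theorem:goursat} assemble these local identities into the desired global equality $H = G$.
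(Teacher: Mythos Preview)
Your overall strategy matches the paper's: establish that $H$ surjects onto the small-prime factor $G_{\{\ell \mid M\}}$, and then invoke Proposition~\ref{theorem:truncate} (or equivalently Theorem~\ref{theorem:goursat} plus Lemma~\ref{lemma:simple-quotients-of-symplectic-group}) to glue with the large-prime factor. However, your handling of the small primes has a genuine gap. The claim that ``the surjection onto the remaining factor [comes] from $H(M) = G(M)$'' is not justified: the equality $H(M) = G(M)$ is a statement about a finite quotient and does not by itself yield $H_{S_M} = G_{S_M}$. You attempt to repair this by showing $H_\ell = G_\ell$ for each $\ell \in S_M$ individually, but this is doubly problematic. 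First, for $\ell \in S_0$ your $M$ has $v_\ell(M) = v_\ell(M_0)$, and you never choose $v_\ell(M_0)$ large enough for any lifting argument; the ``small-prime form of Proposition~\ref{theorem:truncate}'' you invoke does not appear in the paper---that proposition gives a product decomposition over large primes, not a per-prime mod-$\ell^k$-to-$\ell$-adic lifting criterion. Second, and more seriously, even if one grants $H_\ell = G_\ell$ for every $\ell \in S_M$, concluding $H_{S_M} = G_{S_M}$ via Theorem~\ref{theorem:goursat} would require that the \emph{open subgroups} $G_\ell \subset \Sp_{2g}(\mathbb Z_\ell)$ for $\ell \in S_M$ have pairwise disjoint finite simple quotients. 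Unlike the full groups treated in Lemma~\ref{lemma:simple-quotients-of-symplectic-group}, proper open subgroups can have abelian simple quotients and non-abelian simple subquotients of $\Sp_{2g}(\mathbb Z/\ell\mathbb Z)$, and nothing you have said rules out coincidences among these across different small primes.

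The paper avoids both issues by working with the product $G_{\{\ell \le p\}}$ directly rather than prime by prime. Since $G_{\{\ell \le p\}}$ is open in a finite product of $\ell$-adic analytic groups, \cite[Proposition~10.6]{serre1989lectures} gives that its Frattini subgroup $\Phi(G_{\{\ell \le p\}})$ is open; one then takes $M = \prod_{\ell \le p} \ell^{e'(\ell)}$ with $\prod_{\ell \le p} \Gamma_{\ell^{e'(\ell)}} \subset \Phi(G_{\{\ell \le p\}})$. With this choice, $H(M) = G(M)$ forces $H_{\{\ell \le p\}}$ to hit every maximal closed subgroup of $G_{\{\ell \le p\}}$, hence $H_{\{\ell \le p\}} = G_{\{\ell \le p\}}$ in one stroke. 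A single application of Proposition~\ref{theorem:truncate} then finishes. The missing ingredient in your argument is precisely this Frattini step applied to the whole product over small primes.
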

\subsubsection*{Idea of Proof}
The idea of the proof is to find a sufficiently large $M$
so that if $H(M) = G(M)$ then $H_{\{\ell \hspace{.05cm}\nmid \hspace{.05cm}M\}} = G_{\{\ell \hspace{.05cm} \nmid\hspace{.05cm} M\}}$,
which reduces the problem to the situation of
Proposition~\ref{theorem:truncate}.

	\begin{proof}
		Again, the case where $g = 1$ is handled in~\cite[Lemma 7.6]{zywina2010hilbert}, so take $g \geq 2$. Let $p$ be any prime such that $G(\ell) = \Sp_{2g}(\bz / \ell \ZZ)$ for all primes $\ell > p$.
		Observe that the groups $\Gamma_{\ell^k}$ are open in $\Sp_{2g}(\ZZ_\ell)$ because they have finite index in $\Sp_{2g}(\ZZ_\ell)$. Since $G \subset \Sp_{2g}(\zh)$ is open, the group $G_{\{\ell \le p\}} \subset \prod_{\ell \le p} \Sp_{2g}(\ZZ_\ell)$ is open too, so there exist exponents $e(\ell) \ge 1$ with the property that
		\[
			\prod_{\ell \le p} \Gamma_{\ell^{e(\ell)}} \subset G_{\{\ell \le p \}}.
		\]
		Since the groups $\Gamma_{\ell^k}$ are finitely generated pro-$\ell$ open normal subgroups of $\GSp_{2g}(\ZZ_\ell)$, condition (ii) from~\cite[Proposition 10.6]{serre1989lectures} is satisfied. Hence,
		the equivalence of conditions (ii) and (iv) from~\cite[Proposition 10.6]{serre1989lectures} implies
		that the Frattini subgroup defined by
		\begin{align*}
		\Phi(G_{\{\ell \le p\}}) \defeq \bigcap_{\substack{S \subset G_{\{\ell \le p\}} \\ S \text{ maximal closed in  }G_{\{\ell \le p\}}}} S
		\end{align*}
		is open and normal in $G_{\{\ell \le p\}}$. This means we can find exponents $e'(\ell) \ge 1$ such that
		\[
			\prod_{\ell \le p} \Gamma_{\ell^{e'(\ell)}} \subset \Phi(G_{\{\ell \le p\}}).
		\]
		Define $M \defeq \prod_{\ell \le p} \ell^{e'(\ell)}$. Then $H(M) = G(M)$ implies that $H_{\{\ell \leq p\}} = G_{\{\ell \leq p\}}$.
		
		Now take $H$ satisfying $H(M) = G(M)$ and $H(\ell) = \Sp_{2g}(\bz / \ell \ZZ)$ for every prime $\ell \nmid M$. We have that
		\[
			H \subset G \subset H_{\{\ell \le p\}} \times \prod_{\ell > p} \Sp_{2g}(\bz_\ell).
		\]
		To show that $H = G$, we need only verify
		\[
			H = H_{\{\ell \le p\}} \times \prod_{\ell > p} \Sp_{2g}(\bz_\ell),
		\]
		which follows immediately from Proposition~\ref{theorem:truncate}.
	\end{proof}
	
	\subsection{Open Subgroups of $\GSp_{2g}(\wh{\ZZ})$}
	\label{subsection:open-subgroups}
We now return to studying the general symplectic group $\GSp_{2g}(\wh{\ZZ})$. The main result of this subsection tells us that the closure of the commutator subgroup of an open subgroup of $\GSp_{2g}(\wh{\ZZ})$ is open:

\begin{proposition} \label{theorem:commutator-open}
Let $g \geq 2$, and let $H \subset \GSp_{2g}(\zh)$ be an open subgroup. Then the closure of $[H, H]$ is an open subgroup of $\Sp_{2g}(\zh)$.
\end{proposition}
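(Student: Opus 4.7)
The plan is to reduce the problem via the product decomposition $\Sp_{2g}(\widehat{\mathbb Z}) = \prod_{\ell} \Sp_{2g}(\mathbb Z_\ell)$ to a local $\ell$-adic statement, which I would settle using the Lie algebra of an $\ell$-adic analytic group.

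Setting $H_0 := H \cap \Sp_{2g}(\widehat{\mathbb Z})$, the multiplier map realizes $\GSp_{2g}(\widehat{\mathbb Z})/\Sp_{2g}(\widehat{\mathbb Z})$ as the abelian group $\widehat{\mathbb Z}^{\times}$, so every commutator in $H$ lies in $\Sp_{2g}(\widehat{\mathbb Z})$, and clearly $[H_0,H_0] \subset [H,H]$. It therefore suffices to show that $\overline{[H_0,H_0]}$ is open in $\Sp_{2g}(\widehat{\mathbb Z})$. Since $H_0$ is open in the product $\prod_\ell \Sp_{2g}(\mathbb Z_\ell)$, it contains a ``box'' subgroup
\[
V = \prod_{\ell \in S} U_\ell \;\times\; \prod_{\ell \notin S} \Sp_{2g}(\mathbb Z_\ell),
\]
where $S$ is a finite set of primes and each $U_\ell$ is open in $\Sp_{2g}(\mathbb Z_\ell)$. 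A short direct argument (commutators in a direct product factor through each coordinate, and $\prod_\ell \overline{[V_\ell,V_\ell]}$ is already closed in $\prod_\ell \Sp_{2g}(\mathbb Z_\ell)$) shows
\[
\overline{[V,V]} \;=\; \prod_{\ell} \overline{[V_\ell, V_\ell]},
\]
where $V_\ell$ denotes the $\ell$-th component of $V$. Because $\overline{[V,V]} \subset \overline{[H_0,H_0]}$, it is enough to show each $\overline{[V_\ell,V_\ell]}$ is open in $\Sp_{2g}(\mathbb Z_\ell)$ and equals all of $\Sp_{2g}(\mathbb Z_\ell)$ for all but finitely many primes $\ell$.

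The local claim is: for any prime $\ell$ and any open subgroup $U \subset \Sp_{2g}(\mathbb Z_\ell)$, the closure $\overline{[U,U]}$ is open in $\Sp_{2g}(\mathbb Z_\ell)$. I would prove this via standard $\ell$-adic Lie theory: as an open analytic subgroup, $U$ has Lie algebra all of $\mathfrak{sp}_{2g}(\mathbb Q_\ell)$, and the Lie algebra of $\overline{[U,U]}$ contains $[\mathfrak{sp}_{2g}(\mathbb Q_\ell), \mathfrak{sp}_{2g}(\mathbb Q_\ell)]$ (since, via Baker--Campbell--Hausdorff, commutators of $\exp(X),\exp(Y)$ for small $X,Y$ in $\mathrm{Lie}(U)$ give one-parameter subgroups generated by $[X,Y]$ inside $\overline{[U,U]}$). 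But $\mathfrak{sp}_{2g}$ is simple, hence perfect, in characteristic zero, so this bracket equals $\mathfrak{sp}_{2g}(\mathbb Q_\ell)$ for all $g \geq 1$; hence $\overline{[U,U]}$ has full dimension and is open. Applied to $U = \Sp_{2g}(\mathbb Z_\ell)$ for $\ell$ large enough that $\Sp_{2g}(\mathbb F_\ell)$ is perfect (e.g.\ $\ell \geq 5$), this yields $\overline{[\Sp_{2g}(\mathbb Z_\ell), \Sp_{2g}(\mathbb Z_\ell)]} = \Sp_{2g}(\mathbb Z_\ell)$. Enlarging $S$ to absorb the finitely many exceptional small primes, $\overline{[V,V]}$ becomes a basic open subgroup of $\Sp_{2g}(\widehat{\mathbb Z})$, completing the proof.

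The main obstacle is the local Lie-theoretic step, and in particular justifying that the Lie algebra of $\overline{[U,U]}$ contains $[\mathrm{Lie}(U),\mathrm{Lie}(U)]$. A concrete alternative that avoids a black-box appeal to $p$-adic Lie theory would be to pick a principal congruence subgroup $\Gamma_{\ell^k} \subset U$ with $k$ large enough that $\log$ converges on $\Gamma_{\ell^k}$, expand $[\exp(\ell^k X),\exp(\ell^k Y)] = \exp\bigl(\ell^{2k}[X,Y] + O(\ell^{3k})\bigr)$ via Baker--Campbell--Hausdorff, and use perfectness of $\mathfrak{sp}_{2g}$ together with a successive-approximation argument in $\Sp_{2g}(\mathbb Z_\ell)$ to conclude $\log\!\bigl(\overline{[\Gamma_{\ell^k},\Gamma_{\ell^k}]}\bigr) \supset \ell^{2k}\mathfrak{sp}_{2g}(\mathbb Z_\ell)$, which already implies openness.
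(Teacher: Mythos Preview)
Your approach is correct and runs closely parallel to the paper's, though the two differ in how they pass from the adelic statement to the local one and in how they handle the local step.

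For the global-to-local reduction, you pass to a box subgroup $V = \prod_{\ell \in S} U_\ell \times \prod_{\ell \notin S} \Sp_{2g}(\mathbb Z_\ell)$ inside $H \cap \Sp_{2g}(\widehat{\mathbb Z})$ and use the identity $\overline{[V,V]} = \prod_\ell \overline{[V_\ell,V_\ell]}$ directly. The paper instead applies its openness criterion Lemma~\ref{theorem:adelic-open} (which rests on the Goursat-type machinery of Section~\ref{subsection:closed-subgroups}) to the closed subgroup $\overline{[H,H]} \subset \Sp_{2g}(\widehat{\mathbb Z})$, reducing to showing that its $\ell$-adic projections are open for all $\ell$ and equal to $\Sp_{2g}(\mathbb Z_\ell)$ for almost all $\ell$. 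Your box-subgroup reduction is more elementary and sidesteps Goursat entirely; the paper's route reuses infrastructure it has already built.

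For the local step, your ``concrete alternative'' is exactly what the paper does: it proves the explicit commutator formula of Lemma~\ref{lemma:commutator-formula}, verifies perfectness of $\mathfrak{sp}_{2g}(\mathbb Z/\ell\mathbb Z)$ for $\ell \ge 3$ together with a mod-$4$ substitute at $\ell=2$ (Proposition~\ref{stopdrop}), and runs the successive-approximation argument to obtain $\Gamma_{\ell^{2k}} \subset \overline{[\Gamma_{\ell^k},\Gamma_{\ell^k}]}$ (resp.\ $\Gamma_{\ell^{2k+1}}$ when $\ell=2$) in Lemma~\ref{proposition:commutator-effective}. Your first option, a black-box appeal to $p$-adic Lie theory, is a legitimate shortcut but less self-contained; the paper's hands-on computation has the advantage of treating $\ell=2$ uniformly and of pinning down the exact level of the principal congruence subgroup obtained. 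One small imprecision in your write-up: the Lie-theoretic argument gives only openness of $\overline{[\Sp_{2g}(\mathbb Z_\ell),\Sp_{2g}(\mathbb Z_\ell)]}$, and perfectness of $\Sp_{2g}(\mathbb F_\ell)$ by itself does not upgrade this to equality with $\Sp_{2g}(\mathbb Z_\ell)$. That equality is nonetheless standard for $\ell \ge 3$ (the paper cites \cite[Proposition~3]{landesman-swaminathan-tao-xu:lifting-symplectic-group}), so this is a matter of citation rather than a genuine gap.
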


In order to prove Proposition~\ref{theorem:commutator-open}, we shall require a number of preliminary lemmas, which are stated and proven in Sections~\ref{sec1} and~\ref{sec2}.

\subsubsection{Openness Condition}\label{sec1}

The next two lemmas give us a criterion for openness in $\Sp_{2g}(\zh)$:

\begin{lemma}
		\label{lemma:open-image-in-finite-set}
		Let $S$ be a finite set of prime numbers, and let $H \subset \prod_{\ell \in S} \Sp_{2g}(\bz_\ell)$ be a closed subgroup. If each $H_\ell \subset \Sp_{2g}(\bz_\ell)$ is open, then $H \subset \prod_{\ell \in S} \Sp_{2g}(\bz_\ell)$ is open.
	\end{lemma}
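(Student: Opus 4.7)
The plan is to prove the statement by induction on $|S|$, with the base case $|S| = 1$ being exactly the hypothesis. For the inductive step, I would write $S = S' \sqcup \{\ell_0\}$ and consider the projection $H' \defeq \pi_{S'}(H) \subset \prod_{\ell \in S'} \Sp_{2g}(\bz_\ell)$. This $H'$ is closed (as the continuous image of the compact group $H$), and its $\ell$-adic projections agree with the $H_\ell$'s, which are open by hypothesis; so the inductive hypothesis applies and $H'$ is open. Regarding $H$ as a closed subgroup of $H' \times H_{\ell_0}$ that surjects onto both factors, the (topological) Goursat lemma---essentially the setup appearing in the $n = 2$ case of the proof of Theorem~\ref{theorem:goursat}---realizes $H$ as the preimage of the graph of a continuous isomorphism $\phi\colon H'/N_1 \xrightarrow{\sim} H_{\ell_0}/N_2$, for some closed normal subgroups $N_1$ of $H'$ and $N_2$ of $H_{\ell_0}$. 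In particular, $[H' \times H_{\ell_0} : H] = |Q|$, where $Q \defeq H'/N_1 \cong H_{\ell_0}/N_2$, so the task reduces to showing that the profinite group $Q$ is finite.

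The crucial structural observation is that $\Sp_{2g}(\bz_\ell)$ is virtually pro-$\ell$: its congruence kernel $\Gamma_\ell$ is an open normal pro-$\ell$ subgroup with finite quotient $\Sp_{2g}(\bz/\ell\bz)$. Consequently, $H_{\ell_0}$ (being open in $\Sp_{2g}(\bz_{\ell_0})$) is virtually pro-$\ell_0$, and any continuous quotient of it, including $Q$, is also virtually pro-$\ell_0$. By an analogous argument applied to the product $\prod_{\ell \in S'} \Sp_{2g}(\bz_\ell)$, the open subgroup $H'$ is virtually pro-$S'$, meaning it contains an open subgroup which is a pro-$S'$ group, and so is the quotient $Q$. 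Since $S'$ and $\{\ell_0\}$ are disjoint sets of primes, the intersection of two open subgroups of $Q$ witnessing the two virtualities is itself open in $Q$ and is simultaneously pro-$S'$ and pro-$\ell_0$, forcing it to be trivial; hence $Q$ has an open trivial subgroup and is therefore finite.

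Combining the three finite indices $[\prod_{\ell \in S'} \Sp_{2g}(\bz_\ell) : H']$ (by induction), $[\Sp_{2g}(\bz_{\ell_0}) : H_{\ell_0}]$ (by hypothesis), and $[H' \times H_{\ell_0} : H] = |Q|$ (by the preceding paragraph) yields that $H$ has finite index in $\prod_{\ell \in S} \Sp_{2g}(\bz_\ell)$, hence is open. The only step that requires genuine input is the finiteness of the Goursat common quotient $Q$, which will follow cleanly from the pro-$\ell$ structure of symplectic groups together with the pairwise coprimality of the primes in $S$; the topological enhancement of Goursat needed to set this up is essentially the one already used in the $n = 2$ case of Theorem~\ref{theorem:goursat}.
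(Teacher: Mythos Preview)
Your argument is correct, but the paper takes a slightly different and more direct route. Rather than inducting on $|S|$ and analyzing the Goursat common quotient $Q$ at each step, the paper passes in one stroke to the finite-index subgroup $H' \defeq \bigcap_{\ell \in S} \ker\bigl(H \to H(\ell)\bigr)$; each projection $H'_\ell$ then lies inside $\Gamma_\ell$ and is therefore pro-$\ell$, so for distinct primes $\ell$ the factors $H'_\ell$ share no finite simple quotients, and a single application of Theorem~\ref{theorem:goursat} yields $H' = \prod_{\ell \in S} H'_\ell$, which is open. The underlying structural input is the same as yours---$\Sp_{2g}(\bz_\ell)$ is virtually pro-$\ell$---but the paper exploits it by first stripping off the non-pro-$\ell$ part uniformly across all $\ell \in S$, thereby avoiding the induction and the finiteness analysis of $Q$. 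Your approach has the virtue of making the role of the two-factor Goursat lemma explicit and of isolating exactly where coprimality is used, at the cost of a longer argument.
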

	\begin{proof}
		There exists a finite-index subgroup $H' \subset H$ such that $H'(\ell)$ is trivial for every $\ell \in S$, namely the intersection of the kernels of the mod-$\ell$ reductions maps $H \to H(\ell)$. Since each $H'_\ell$ is a pro-$\ell$ group, Lemma~\ref{theorem:goursat} implies that $H' = \prod_{\ell \in S} H'_\ell$.
		Thus, $H$ contains an open subgroup and is therefore itself open.
	\end{proof}

       \begin{lemma}
		\label{theorem:adelic-open}
		Let $g \geq 2$ and let $H \subset \Sp_{2g}(\zh)$ be a closed subgroup. If $H_{\ell'}$ is open in $\Sp_{2g}(\bz_{\ell'})$ for all $\ell'$ and $H_\ell = \Sp_{2g}(\bz_\ell)$ for all but finitely many $\ell$, then $H$ is open in $\Sp_{2g}(\zh)$.
	\end{lemma}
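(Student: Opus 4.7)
The plan is to combine Proposition~\ref{theorem:truncate} (to split $H$ as a product over the ``small'' and ``large'' primes) with Lemma~\ref{lemma:open-image-in-finite-set} (to establish openness on the finite factor).

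First, by hypothesis there is a finite set $S$ of primes such that $H_\ell = \Sp_{2g}(\bz_\ell)$ whenever $\ell \notin S$; pick any prime $p$ with $p \ge \max S$. Then $H(\ell) = \Sp_{2g}(\bz/\ell\bz)$ for every $\ell > p$, so Proposition~\ref{theorem:truncate} applies and yields the decomposition
\[
    H \;=\; H_{\{\ell \le p\}} \,\times\, \prod_{\ell > p} \Sp_{2g}(\bz_\ell).
\]

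Second, $H_{\{\ell \le p\}}$ is closed in the finite product $\prod_{\ell \le p}\Sp_{2g}(\bz_\ell)$ (being the continuous image of the compact group $H$), and its $\ell$-adic projection is exactly $H_\ell$, which is open in $\Sp_{2g}(\bz_\ell)$ for every $\ell \le p$ by assumption. Therefore Lemma~\ref{lemma:open-image-in-finite-set} shows that $H_{\{\ell \le p\}}$ is open in $\prod_{\ell \le p} \Sp_{2g}(\bz_\ell)$.

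Combining these two facts, $H$ is a product of an open subgroup of $\prod_{\ell \le p}\Sp_{2g}(\bz_\ell)$ with the full group $\prod_{\ell > p}\Sp_{2g}(\bz_\ell)$, hence is open in $\Sp_{2g}(\zh)$. The only nontrivial input is Proposition~\ref{theorem:truncate}, which is already available; the rest is a bookkeeping exercise, so I do not anticipate any real obstacle.
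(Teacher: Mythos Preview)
Your proof is correct and follows essentially the same approach as the paper: both combine Lemma~\ref{lemma:open-image-in-finite-set} (openness of $H_{\{\ell\le p\}}$ in the finite product) with Proposition~\ref{theorem:truncate} (splitting $H$ as a product). The only cosmetic difference is the order in which the two ingredients are invoked.
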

	\begin{proof}
		Let $p$ be the largest prime with $H_{p} \neq \Sp_{2g}(\mathbb Z_{p})$. By Lemma~\ref{lemma:open-image-in-finite-set}, we have that  $H_{\{\ell \le p\}} \subset \prod_{\ell \le p} \Sp_{2g}(\mathbb Z_\ell)$ is an open subgroup.
		The result then follows from Proposition~\ref{theorem:truncate}.
	\end{proof}

\subsubsection{Two Computational Lemmas}\label{sec2}

The next two results are used in the proof of Proposition~\ref{theorem:commutator-open}. The following lemma describes the commutator of an element of $\Gamma_{\ell^m}$ with an element of $\Gamma_{\ell^n}$.

	\begin{lemma} \label{lemma:commutator-formula}
		Let $n\le m$ be positive integers, and let $\id_{2g} + \ell^n U$ and $\id_{2g} + \ell^m V$ be elements of $\GL_{2g}(\bz_\ell)$. Then we have
		\begin{align*}
		& (\id_{2g} + \ell^nU)^{-1}(\id_{2g} + \ell^mV)(\id_{2g} + \ell^n U)(\id_{2g} + \ell^m V)^{-1} \equiv \id_{2g} + \ell^{n+m} (VU - UV)\,\,\, (\on{mod}{\ell^{2n+m}}).
		\end{align*}
	\end{lemma}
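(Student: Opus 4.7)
The plan is a direct computation exploiting the telescoping identity
\[
A^{-1}BAB^{-1} - \id_{2g} \; = \; A^{-1}(BA - AB) B^{-1},
\]
where I abbreviate $A \defeq \id_{2g} + \ell^n U$ and $B \defeq \id_{2g} + \ell^m V$. This identity is immediate from subtracting $\id_{2g} = A^{-1} \cdot AB \cdot B^{-1}$ from $A^{-1}BAB^{-1}$, and it converts the problem of evaluating the commutator into the (easier) problem of evaluating a single difference $BA - AB$, up to an $\ell$-adic correction coming from $A^{-1}$ and $B^{-1}$.

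First I would expand $AB$ and $BA$ directly:
\[
AB \; = \; \id_{2g} + \ell^n U + \ell^m V + \ell^{n+m} UV, \qquad BA \; = \; \id_{2g} + \ell^n U + \ell^m V + \ell^{n+m} VU,
\]
so that the $\id_{2g}$, $\ell^n U$, and $\ell^m V$ terms cancel exactly and $BA - AB = \ell^{n+m}(VU - UV)$. Substituting back into the telescoping identity gives
\[
A^{-1}BAB^{-1} \; = \; \id_{2g} + \ell^{n+m} \, A^{-1}(VU - UV) B^{-1}.
\]

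Finally, I would expand $A^{-1}$ and $B^{-1}$ as convergent $\ell$-adic geometric series, $A^{-1} = \id_{2g} - \ell^n U + \ell^{2n} U^2 - \cdots$ and $B^{-1} = \id_{2g} - \ell^m V + \cdots$. Under the hypothesis $n \le m$, the lowest-order correction to $A^{-1}(VU-UV)B^{-1} - (VU-UV)$ has $\ell$-adic valuation at least $n$, contributed by the $-\ell^n U$ term in $A^{-1}$. Multiplying by the external factor $\ell^{n+m}$, the error term lies in $\ell^{2n+m} \, \on{Mat}_{2g \times 2g}(\bz_\ell)$, which is exactly the required congruence. There is no real obstacle here; the only subtle point is recording that since $n \le m$, it is $\ell^n$ rather than $\ell^m$ that controls the size of the error, which is why the modulus in the statement is $\ell^{2n+m}$ rather than $\ell^{n+2m}$.
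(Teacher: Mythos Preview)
Your argument is correct and lands on the same intermediate expression as the paper, namely
\[
A^{-1}BAB^{-1} \;=\; \id_{2g} + \ell^{n+m}\,A^{-1}(VU-UV)B^{-1},
\]
after which both proofs finish identically by expanding $A^{-1}$ and $B^{-1}$ $\ell$-adically. The only difference is how you reach this expression: the paper first computes the conjugate $BAB^{-1}$ by expanding $B^{-1}$ as a geometric series and telescoping the resulting sum, then multiplies on the left by $A^{-1}$; you instead invoke the algebraic identity $A^{-1}BAB^{-1}-\id_{2g}=A^{-1}(BA-AB)B^{-1}$ and compute $BA-AB$ directly. Your route is a bit more economical, since the cancellation in $BA-AB$ is immediate and avoids the series manipulation, but the two arguments are otherwise the same computation viewed from slightly different angles.
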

	\begin{proof}
		We have
		\begin{align*}
		(\id_{2g} + \ell^mV)(\id_{2g} + \ell^n U)(\id_{2g} + \ell^mV)^{-1} &= \id_{2g} + \ell^n (\id_{2g} + \ell^mV)U(\id_{2g} + \ell^mV)^{-1} \\			
		&= \id_{2g} + \ell^n (\id_{2g} + \ell^m V)U \left(\sum_{i=0}^\infty (-1)^i \ell^{im} V^i\right) \\
		&= \id_{2g} + \ell^n \sum_{i=0}^\infty \Big[ (-1)^i \ell^{im} U V^i + (-1)^i \ell^{(i+1)m} VUV^i \Big]\\
		&= \id_{2g} + \ell^n U + \ell^{n+m} (VU - UV) (\id_{2g} + \ell^m V)^{-1}.
		\end{align*}
		Multiplying on the left by $(\id_{2g} + \ell^nU)^{-1}$ gives the desired result.
	\end{proof}

In the next proposition, we show the commutator subalgebra of $\mf{sp}_{2g}(\ZZ/\ell \ZZ)$ is sufficiently large for all primes $\ell$.

    \begin{proposition}\label{stopdrop}
We have the following results:
\begin{enumerate}
\item For all $g \geq 1$ and $\ell \geq 3$ we have $[\mf{sp}_{2g}(\ZZ/ \ell \ZZ), \mf{sp}_{2g}(\ZZ/ \ell \ZZ)] = \mf{sp}_{2g}(\ZZ/ \ell \ZZ)$.
\item For all $g \geq 1$ we have $[\mf{sp}_{2g}(\ZZ/ 4 \ZZ), \mf{sp}_{2g}(\ZZ/ 4 \ZZ)] \supset 2 \cdot \mf{sp}_{2g}(\ZZ / 2 \ZZ)$.
\end{enumerate}
\end{proposition}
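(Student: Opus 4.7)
The plan is to realize each element of a standard spanning set of $\mathfrak{sp}_{2g}(\mathbb{Z}/\ell\mathbb{Z})$ (respectively of $2\cdot \mathfrak{sp}_{2g}(\mathbb{Z}/2\mathbb{Z})$, viewed inside $\mathfrak{sp}_{2g}(\mathbb{Z}/4\mathbb{Z})$) as an explicit commutator of two carefully chosen elements. Recall from Section~\ref{subsection:stimpy} that every element of $\mathfrak{sp}_{2g}$ is a sum of a block-diagonal piece $h(A)$ (with $A$ in the upper-left block and $-A^T$ in the lower-right), a block-upper piece $u^+(B)$ (with symmetric $B$ in the upper-right block), and a block-lower piece $u^-(C)$ (with symmetric $C$ in the lower-left block). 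Direct matrix multiplication yields three bracket identities:
\[
    [u^+(B),u^-(C)] = h(BC), \qquad [h(A),u^+(B)] = u^+(AB + BA^T), \qquad [h(A),u^-(C)] = u^-(-A^TC - CA).
\]

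For part (1), I will plug standard elementary matrices $E_{ij}$ and their symmetrized versions $E_{ij}+E_{ji}$ into these identities to hit every basis element of each of the three summands. For instance, the first identity with $(B,C) = (E_{ii},E_{ii})$ or $(E_{ii},E_{ij}+E_{ji})$ realizes $h(E_{ii})$ and $h(E_{ij})$, spanning the $h$-summand. The second identity with appropriate choices realizes $u^+(E_{ij}+E_{ji})$ for $i\neq j$ directly, and realizes $u^+(2E_{ii})$; because $\ell \geq 3$ makes $2$ invertible, we recover $u^+(E_{ii})$ and thereby span the $u^+$-summand. The $u^-$-summand is handled symmetrically via the third identity.

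For part (2), I will carry out the same computations in $\mathfrak{sp}_{2g}(\mathbb{Z}/4\mathbb{Z})$, inserting a factor of $2$ into one input of each bracket so that every basis element of $\mathfrak{sp}_{2g}(\mathbb{Z}/2\mathbb{Z})$, scaled by $2$, appears as the output. For example, $[u^+(E_{ii}),u^-(2E_{ii})] = h(2E_{ii})$ and $[h(E_{ij}),u^+(2E_{jj})] = u^+(2(E_{ij}+E_{ji}))$ handle two of the cases, and the remaining basis elements are analogous.

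The approach is essentially bookkeeping, and I anticipate no conceptual obstacle. The only subtlety to guard against is ensuring that the factor of $2$ in part (2) is placed so that the output lies in $2\cdot \mathfrak{sp}_{2g}(\mathbb{Z}/2\mathbb{Z})$ and that the outputs span all of $2\cdot \mathfrak{sp}_{2g}(\mathbb{Z}/2\mathbb{Z})$ as a $\mathbb{Z}/4\mathbb{Z}$-submodule.
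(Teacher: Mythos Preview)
Your proposal is correct. For part~(b) it is essentially the paper's own argument: both proceed by writing down explicit block-matrix commutators to hit the required basis elements, with only cosmetic differences in which pairs of blocks are bracketed (the paper uses $[h(A),h(D)]$ and $[u^+(B)+u^-(C),\,u^+(E)+u^-(F)]$ to cover the $h$-summand, whereas you use $[u^+(B),u^-(C)]$).

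For part~(a) your route is genuinely different. The paper dispatches the statement in one line by citing that $\mathfrak{sp}_{2g}(\mathbb Z/\ell\mathbb Z)$ is a simple Lie algebra for $\ell\ge 3$ (so its derived subalgebra is everything). Your argument instead reuses the same explicit bracket identities from part~(b), exploiting that $2$ is a unit mod $\ell$ to recover $u^\pm(E_{ii})$ from $u^\pm(2E_{ii})$. Your approach is more self-contained and avoids an external reference; the paper's is shorter but imports a structural fact. One minor point worth making explicit in your write-up: for $u^+(2E_{ii})$ in part~(b) you do not need to insert an extra factor of $2$, since $[h(E_{ii}),u^+(E_{ii})]=u^+(2E_{ii})$ already lands in $2\cdot\mathfrak{sp}_{2g}(\mathbb Z/2\mathbb Z)$; this is the one basis element where the ``insert a $2$'' recipe should be skipped.
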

\begin{proof}
	Statement (a) follows immediately from~\cite[Theorem 2.6]{eliotsteinclub}, which states that $\mf{sp}_{2g}(\ZZ/ \ell \ZZ)$ is simple for $\ell \geq 3$. It remains to prove Statement (b). For this, we compute several commutators and make deductions based on each one. For convenience, let $\mf{g} = [\mf{sp}_{2g}(\ZZ/ 4 \ZZ), \mf{sp}_{2g}(\ZZ/ 4 \ZZ)]$, let $A, D$ denote arbitrary $g \times g$ matrices, and let $B,C,E,F$ denote symmetric $g \times g$ matrices. \mbox{Since}
\begin{align}
	\label{equation:block-diagonal-commutator}
 \left[ \left[\begin{array}{c|c} A & 0 \\ \hline 0 & -A^T \end{array}\right], \left[\begin{array}{c|c} D & 0 \\ \hline 0 & -D^T \end{array}\right] \right] & = \left[\begin{array}{c|c} AD - DA & 0 \\ \hline 0 & A^TD^T - D^TA^T \end{array}\right],
 \intertext{all block-diagonal matrices in $\mf{sp}_{2g}(\ZZ/4\ZZ)$ with every diagonal entry equal to $0$ are contained in $\mf{g}$. This can be seen
by taking $A$ and $D$ to be various elementary matrices.
Furthermore,}
\label{equation:block-off-diagonal-commutator}
 \left[ \left[\begin{array}{c|c} 0 & B \\ \hline C & 0 \end{array}\right], \left[\begin{array}{c|c} 0 & E \\ \hline F & 0 \end{array}\right] \right] & = \left[\begin{array}{c|c} BF - EC & 0 \\ \hline 0 & CE - FB \end{array}\right],
 \intertext{so we can arrange that $BF-EC$ is an elementary matrix with a single nonzero entry on the diagonal.
	 Summing matrices from~\eqref{equation:block-diagonal-commutator} and~\eqref{equation:block-off-diagonal-commutator} tells us that all block-diagonal matrices are contained in $\mf{g}$. Additionally,}
\label{equation:identity-commutator}
  \left[ \left[\begin{array}{c|c} \id_g & 0 \\ \hline 0 & -\id_g \end{array}\right], \left[\begin{array}{c|c} 0 & B \\ \hline 0 & 0 \end{array}\right] \right] & = \left[\begin{array}{c|c} 0 & 2B \\ \hline 0 & 0 \end{array}\right].
  \intertext{Repeating the computation from~\eqref{equation:identity-commutator} with the other off-diagonal block nonzero implies that $2$ times any matrix in $\mf{sp}_{2g}(\ZZ / 2 \ZZ)$ whose diagonal blocks are $0$ is an element of $\mf{g}$. The desired result follows because $2 \cdot \mf{sp}_{2g}(\mathbb Z/2\ZZ)$ is contained in the subspace generated by the matrices from~\eqref{equation:block-diagonal-commutator},~\eqref{equation:block-off-diagonal-commutator}, and~\eqref{equation:identity-commutator}. \nonumber \qedhere}
\end{align}
\end{proof}

\subsubsection{Completing the Proof}

In order to prove Proposition~\ref{theorem:commutator-open}, we require the following lemma, which states that the closure of the commutator $[\Gamma_{\ell^k}, \Gamma_{\ell^k}]$ is large.

\begin{lemma} \label{proposition:commutator-effective}
	Fix $k \geq 1$. Then if $\ell \neq 2$, the closure of $[\Gamma_{\ell^k}, \Gamma_{\ell^k}]$ contains $\Gamma_{\ell^{2k}}$ and if $\ell = 2$, the closure of $[\Gamma_{\ell^k}, \Gamma_{\ell^k}]$ contains $\Gamma_{\ell^{2k+1}}$.
\end{lemma}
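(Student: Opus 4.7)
The plan is to combine Lemma~\ref{lemma:commutator-formula} with Proposition~\ref{stopdrop} via a telescoping argument. Set $H \defeq \overline{[\Gamma_{\ell^k}, \Gamma_{\ell^k}]}$, and let $M \defeq 2k$ in the odd case or $M \defeq 2k+1$ in the even case. It suffices to show that, for every integer $r \ge M$, the image of $[\Gamma_{\ell^k}, \Gamma_{\ell^k}]$ in the graded quotient $\Gamma_{\ell^r}/\Gamma_{\ell^{r+1}} \simeq \mf{sp}_{2g}(\FF_\ell)$ is surjective. Given this, for any $g \in \Gamma_{\ell^M}$ I would inductively construct commutators $h_0, h_1, \ldots \in [\Gamma_{\ell^k}, \Gamma_{\ell^k}]$ with partial products $P_j \defeq h_{j-1} \cdots h_0$ satisfying $g \equiv P_j \pmod{\Gamma_{\ell^{M+j}}}$; since $\bigcap_j \Gamma_{\ell^{M+j}} = \{1\}$, the $P_j$ converge to $g$ in the $\ell$-adic topology, and closedness of $H$ then forces $g \in H$.

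For odd $\ell$: fix $j \ge 0$ and apply Lemma~\ref{lemma:commutator-formula} with $n = k,\, m = k+j$. For any $U, V \in \mf{sp}_{2g}(\ZZ_\ell)$, the resulting commutator of $\id + \ell^k U$ and $\id + \ell^{k+j} V$ lies in $[\Gamma_{\ell^k}, \Gamma_{\ell^k}]$ (since $\Gamma_{\ell^{k+j}} \subset \Gamma_{\ell^k}$) and projects to a Lie bracket in $\Gamma_{\ell^{2k+j}}/\Gamma_{\ell^{2k+j+1}}$. Because $[\mf{sp}_{2g}(\FF_\ell), \mf{sp}_{2g}(\FF_\ell)] = \mf{sp}_{2g}(\FF_\ell)$ by Proposition~\ref{stopdrop}(a), products of such commutators surject onto $\mf{sp}_{2g}(\FF_\ell)$, completing this case.

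For $\ell = 2$ with $k \ge 2$: given any $W \in \mf{sp}_{2g}(\FF_2)$, Proposition~\ref{stopdrop}(b) provides matrices $\bar A_i, \bar B_i \in \mf{sp}_{2g}(\ZZ/4\ZZ)$ with $\sum_i [\bar A_i, \bar B_i] = 2W$; lifting to $\mf{sp}_{2g}(\ZZ_2)$ yields $\sum_i [A_i, B_i] = 2W + 4T$ for some $T \in \mf{sp}_{2g}(\ZZ_2)$. Expanding $\prod_i [\id + 2^k A_i,\, \id + 2^{k+j} B_i]$ via Lemma~\ref{lemma:commutator-formula} (whose individual errors live modulo $2^{3k+j}$, and whose cross-terms vanish modulo $2^{2k+j+2}$ automatically) yields a congruence of the form $\equiv \id - 2^{2k+j+1} W \pmod{2^{2k+j+2}}$. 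This is valid precisely when $3k + j \ge 2k + j + 2$, i.e., when $k \ge 2$, and since $W$ was arbitrary, it gives the required surjectivity.

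The main obstacle is the remaining case $\ell = 2,\, k = 1$, where the precision of Lemma~\ref{lemma:commutator-formula} becomes too coarse. Here one must expand the commutator to one higher order---a direct matrix computation yields
\[
	[\id + 2U,\, \id + 2V] \equiv \id + 4[U, V] - 8[U, V](U + V) \pmod{16}.
\]
Combining this higher-order expansion with the matrices supplied by Proposition~\ref{stopdrop}(b), and carefully bookkeeping the correction $-8[U,V](U+V)$ (possibly using auxiliary commutators from $[\Gamma_2, \Gamma_4] \subset [\Gamma_2, \Gamma_2]$, which land deeper in the filtration), should recover surjectivity onto $\Gamma_{2^r}/\Gamma_{2^{r+1}}$ for every $r \ge 3$ and complete the proof.
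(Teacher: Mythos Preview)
Your approach is the same as the paper's: combine Lemma~\ref{lemma:commutator-formula} with Proposition~\ref{stopdrop} and telescope to write an arbitrary element of $\Gamma_{\ell^M}$ as a convergent product of commutators. You are in fact slightly more careful than the paper in one respect: you work with \emph{products} of group commutators (matching \emph{sums} of Lie brackets), whereas the paper tacitly writes each step as a single commutator.

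Regarding the $\ell=2$, $k=1$ case that you single out as the ``main obstacle'': the paper's own argument also only works for $k\ge 2$. Indeed, it applies Lemma~\ref{lemma:commutator-formula} with $n=k$, $m=k+i$, which yields precision $2^{3k+i}$, and reaching the target level $2^{2k+i+2}$ forces $k\ge 2$; the paper's displayed identity $\id_{2g}+2^k\mf{sp}_{2g}(\ZZ/4\ZZ)=\ker(\Sp_{2g}(\ZZ/2^{k+2}\ZZ)\to\Sp_{2g}(\ZZ/2^k\ZZ))$ is likewise only asserted for $k\ge 2$. So your sketch for $k=1$ goes beyond what the paper does. However, this case is actually dispensable: the only application of the lemma is in Proposition~\ref{theorem:commutator-open}, where one chooses $k$ with $\Gamma_{\ell^k}\subset H'$, and one may always take $k\ge 2$. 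Thus you need not complete the higher-order expansion for $k=1$; restricting to $k\ge 2$ when $\ell=2$ suffices for everything the paper needs.
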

\begin{proof}
First suppose $\ell \geq 3$. Statement (1) of Proposition~\ref{stopdrop} implies that
for any $W' \in \mf{sp}_{2g}(\ZZ/ \ell \ZZ)$, there exist $U', V' \in \mf{sp}_{2g}(\ZZ/ \ell \ZZ)$ so that
$V'U' - U'V' = W'$.
Choosing lifts $W, U, V$ of $W', U', V'$, it follows from Lemma~\ref{lemma:commutator-formula} that for every $i$ and for every such
\begin{align*}
\id_{2g} + \ell^{2k+i} W \in \Gamma_{\ell^{2k+i}}, \quad
\id_{2g} + \ell^k U \in \Gamma_{\ell^k}, \quad \text{ and} \quad
\id_{2g} + \ell^{k+i} V \in \Gamma_{\ell^{k+i}},
\end{align*}
we have that
	\begin{align*}
		& (\id_{2g} + \ell^k U)^{-1}(\id_{2g} + \ell^{k+i} V)(\id_{2g} + \ell^k U)(\id_{2g} + \ell^{k+i} V)^{-1} \equiv \id_{2g} + \ell^{2k+i} W\,\,\, (\on{mod}{\ell^{2k+i+1}}).
		\end{align*}
Take $M_0 \in \Gamma_{\ell^{2k}}$. There exists $X_1 \in [\Gamma_{\ell^{2k}}, \Gamma_{\ell^{2k}}]$ and $M_1 \in \Gamma_{\ell^{2k+1}}$ with the property that $M_0 = X_1M_1$. Proceeding inductively in this manner, we obtain sequences $\{X_i : i = 1, 2, \dots\} \subset [\Gamma_{\ell^k}, \Gamma_{\ell^k}]$ and $\{M_i : i = 0, 1, 2, \dots\}$ with $M_i \in \Gamma_{\ell^{2k+i}}$ such that $M_i = X_{i+1}M_{i+1}$ for each $i$. Then we have the following equalities of matrices in $\on{Sp}_{2g}(\mathbb{Z}_\ell)$:
$$M_0 = \lim_{i \to \infty} \left(\prod_{j = 1}^{i} X_j \right)M_i = \prod_{j = 1}^\infty X_j.$$
It follows that $\Gamma_{\ell^{2k}}$ is contained in the closure of $[\Gamma_{\ell^k}, \Gamma_{\ell^k}]$.

Now suppose $\ell = 2$. Observe that for each $k \geq 2$ we have
$$\id_{2g} + 2^k \cdot \mf{sp}_{2g}(\ZZ/ 4 \ZZ) = \ker(\Sp_{2g}(\ZZ/ 2^{k+2} \ZZ) \to \Sp_{2g}(\ZZ/ 2^k \ZZ)).$$
It follows from Statement (2) of Proposition~\ref{stopdrop} and Lemma~\ref{lemma:commutator-formula} that for every choice of $\id_{2g} + 2^{2k+i+1} W \in \Gamma_{2^{2k+i+1}}$ and for each nonnegative integer $i$, there exist $\id_{2g} + 2^k U \in \Gamma_{2^k}$ and $\id_{2g} + 2^{k+i} V \in \Gamma_{2^{k+i}}$ with the property that
	\begin{align*}
		& (\id_{2g} + 2^k U)^{-1}(\id_{2g} + 2^{k+i} V)(\id_{2g} + 2^k U)(\id_{2g} + 2^{k+i} V)^{-1} \equiv \id_{2g} + 2^{2k+i+1} W\,\,\, (\on{mod}{\ell^{2k+i+2}}).
		\end{align*}
One may now finish the proof by applying a similar inductive argument to the one used in the case $\ell \geq 3$.
\end{proof}

We are finally in position to prove the main result of this section.
\begin{proof}[Proof of Proposition~\ref{theorem:commutator-open}]
By Lemma~\ref{theorem:adelic-open}, it suffices to prove the following two statements:
\begin{enumerate}
\item The closure of $[H,H]$ surjects onto $\Sp_{2g}(\bz_\ell)$ for all but finitely many $\ell$.
\item The closure of $[H,H]$ maps onto an open subgroup of $\Sp_{2g}(\bz_\ell)$ for each $\ell$.
\end{enumerate}
For Statement (a), notice that $H$ surjects onto $\GSp_{2g}(\bz_\ell)$ for all but finitely many $\ell$.  Note that for $\ell \geq 3$, we have $[\GSp_{2g}(\bz_\ell), \GSp_{2g}(\bz_\ell)] = \Sp_{2g}(\bz_\ell)$ because, by~\cite[Proposition 1]{landesman-swaminathan-tao-xu:lifting-symplectic-group}, we have that
$$\Sp_{2g}(\ZZ_\ell) = [\Sp_{2g}(\ZZ_\ell), \Sp_{2g}(\ZZ_\ell)] \subset [\GSp_{2g}(\bz_\ell), \GSp_{2g}(\bz_\ell)] \subset \Sp_{2g}(\bz_\ell).$$
Thus, $[H,H]$ itself surjects onto $[\GSp_{2g}(\bz_\ell), \GSp_{2g}(\bz_\ell)] = \Sp_{2g}(\bz_\ell)$ for all $\ell \geq 3$.
		
To show statement (b), we prove that the closure of $[H', H']$ is open in $\Sp_{2g}(\bz_\ell)$ for any open subgroup $H' \subset \GSp_{2g}(\bz_\ell)$. Since $H'$ is open, there exists some $k \geq 1$ such that $\Gamma_{\ell^k} \subset H'$, so by Lemma~\ref{proposition:commutator-effective}, there exists $m \geq 2k$ such that $\Gamma_{\ell^m} \subset [\Gamma_{\ell^k}, \Gamma_{\ell^k}] \subset [H', H']$. Thus, $[H', H']$ contains an open subgroup and must therefore itself be open, as desired.
\end{proof}

\section{Background on Galois Representations of PPAVs}
\label{section:background}

This section is devoted to describing the basic definitions and properties concerning Galois representations associated to families of PPAVs.
Specifically, in Section~\ref{subsection:setup}, we construct these Galois representations and provide precise definitions for the various monodromy groups discussed in Section~\ref{weaintevergonnaberoyals}. Then, in Section~\ref{subsection:notation-for-families},
we explain how a family of PPAVs over a number field $K$ may be extended to a family over the number ring $\OO_K$. The notation introduced in this section will be utilized throughout the rest of the paper.

\subsection{Defining Galois Representations for Families of PPAVs}\label{subsection:setup}

Let $K$ be a number field, and let $g \geq 0$ be an integer. Fix a base scheme $T$ (we usually take $T$ to be $\spec K$ or an open subscheme of $\spec \mathcal O_K$), and let $U$ be an integral $T$-scheme with generic point $\eta$ (we usually take $U$ to be an open subscheme of $\mathbb{P}_K^r$ or $\mathbb{P}_{\OO_K}^r$). Let $A \to U$ be a \emph{family} of $g$-dimensional PPAVs, by which we mean the following:
\begin{itemize}
\item The morphism $A \to U$ is flat, proper, and finitely presented with smooth geometrically connected fibers of dimension $g$.
\item $A$ is a group scheme over $U$, and the resulting abelian scheme is equipped with a principal polarization.
\end{itemize}
Note that $A \rightarrow U$ is automatically abelian, smooth, and projective, and further observe that the fiber $A_u$ over any point $u \in U$ is a PPAV of dimension $g$ over the \mbox{residue field $\kappa(u)$ of $u$.}

Choose a geometric generic point $\ol{\eta}$ for $U$. If $\kappa(\eta)$ has characteristic prime to $m$, the action of the \'{e}tale fundamental group $\pi_1(U,\ol{\eta})$\footnote{For a general foundational reference on the \'etale fundamental group, see \cite{noopsortSGA1Grothendieck1971}.} on the geometric generic fiber $A_{\ol{\eta}}[m]$ gives rise to a continuous linear representation whose image is constrained by the Weil pairing to lie in the general symplectic group $\GSp_{2g}(\ZZ/m \ZZ)$. We denote this \emph{mod-$m$ representation} by
\begin{equation}\label{atoll}
\rho_{A,m} \colon \pi_1(U, \ol{\eta}) \to \GSp_{2g}(\ZZ/m \ZZ).
\end{equation}
The map in~\eqref{atoll} is well-defined up to the choice of base-point $\ol{\eta}$, and choosing a different such $\ol{\eta}$ would only alter the image of $\rho_{A,m}$ by an inner automorphism of $\GSp_{2g}(\mathbb Z/m\mathbb Z)$.
For this reason, when it will not lead to confusion, we may omit
the basepoint from our notation and write $\pi_1(U)$ for $\pi_1(U, \overline \eta)$.

If $\ell$ is a prime not dividing the characteristic of $\kappa(\eta)$, then we can take the inverse limit of the mod-$\ell^k$ representations to obtain the \emph{$\ell$-adic representation}
\begin{equation}\label{itsladicguys}
\rho_{A,\ell^\infty} \colon \pi_1(U) \to \varprojlim_k{\GSp_{2g}(\ZZ/\ell^k \ZZ)}.
\end{equation}
Moreover, if $\kappa(\eta)$ has characteristic $0$, we can take the inverse limit of all the mod-$m$ representations (or equivalently the product of all the $\ell$-adic representations) to obtain an \emph{adelic} or \emph{global representation}
\begin{equation}\label{thisisthepartofme}
	\rho_A \colon \pi_1(U) \to \varprojlim_m {\GSp_{2g}(\ZZ/m \ZZ)} \simeq \GSp_{2g}(\wh{\ZZ}).
\end{equation}
\vspace*{-0.2in}
\begin{remark}
In the situation that $U = \spec K$, the choice of $\ol{\eta}$ corresponds to a choice of algebraic closure $\ol{K}$ of $K$. Taking $G_K \defeq \Gal(\ol{K}/K)$ to be the absolute Galois group, we have that $\pi_1(U, \ol{\eta}) = G_K$. This recovers the notion of a Galois representation of a PPAV over a field as a map $\rho_A: G_K \rightarrow \GSp_{2g}(\widehat{\mathbb Z})$.
\end{remark}
\vspace*{-0.1in}
\begin{remark}
	\label{remark:det-rho-is-chi}
For a commutative ring $R$, recall from the definition of the general symplectic group that we have a multiplier map $\on{mult} \colon \GSp_{2g}(R) \to R^\times$. Let $\chi_m$ be the mod-$m$ cyclotomic character, and let $\chi$ be the cyclotomic character. If $U = \spec k$, (with $k$ an arbitrary characteristic $0$ field) it follows from $G_k$-invariance of the Weil pairing that $\chi_m = \mult \circ \rho_{A,m}$ and $\chi = \mult \circ \rho_{A}$.
More generally, if $U$ is normal and integral, and $\phi: \pi_1(U) \rightarrow \pi_1(\spec K)$, then
$\chi \circ \phi = \mult \circ \rho_A$,
which holds because it holds for the generic fiber $A_\eta \rightarrow \spec K(\eta)$, and the map $\pi_1(\eta) \rightarrow \pi_1(U)$ is surjective.
\end{remark}

We now define the monodromy groups associated to the representations defined above. We call the image of $\rho_A \colon \pi_1(U) \to \GSp_{2g}(\wh{\ZZ})$ the {\it monodromy} of the family $A \to U$, and we denote it by $\mono_A$. When the base scheme is $T = \spec K$, we also define the {\it geometric monodromy}, denoted by $\mono_A^{\on{geom}}$, to be the image of the adelic representation $\rho_{A_{\overline K}}\colon \pi_1(U_{\overline K}) \rightarrow \GSp_{2g}(\widehat{\mathbb Z})$ associated to the base-changed family $A_{\ol{K}} \to U_{\ol{K}}$.
Since the cyclotomic character is trivial on $G_{\ol{K}}$, it follows that $\mono_A^{\on{geom}}$ is actually a subgroup of $\Sp_{2g}(\wh{\ZZ})$. We write $\mono_A(m)$ and $\mono_A^{\on{geom}}(m)$ for the mod-$m$ reductions of the above-defined monodromy groups.
We say $A \rightarrow U$ has big monodromy if $\mono_A$ is open
in $\GSp_{2g}(\zh)$ and $A \rightarrow U$ has big geometric monodromy if
$\mono_A^{\on{geom}}$ is open in $\Sp_{2g}(\zh)$.

In particular, for each $u \in U$, $\mono_{A_u}$ and $\mono_{A_u}^{\on{geom}}$ are the monodromy groups associated to the family $A_u \to \spec \kappa(u)$. Since $A_u$ is the pullback of $A$ along $\iota: u \rightarrow U$, $\rho_{A_u} = \iota \circ \rho_A$ and we obtain an inclusion $H_{A_u} \subset H_A$. Note that if $U$ is normal, then the map $\pi_1(\eta) \rightarrow \pi_1(U)$ is surjective, so we have that $\mono_{A_\eta} = \mono_A$.

\subsection{Extending Families over $K$ to $\OO_K$}
\label{subsection:notation-for-families}

Recall that, for a single abelian variety $A_u$ over $u = \spec K$, good reduction for $A_u$ at a prime $\mf{p} \in \Sigma_K$ implies that the Galois representation $\rho_{A_u, m} \colon G_K \to \GSp(\bz / m \bz)$ is unramified at $\mf{p}$, provided that $\mf{p}$ does not divide $m$. All but finitely many primes $\mf{p}$ are primes of good reduction for $A_u$. Similarly, for a family $A \to U$ over $\spec K$, extending the definition of this family ``across'' a prime $\mf{p} \in \Sigma_K$ reveals constraints on the monodromy of that family and its subfamilies. The purpose of this section is to explain why any family $A \to U$ can be extended across most primes in $\Sigma_K$. The constructions introduced here become particularly important in Section~\ref{subsection:applying-wallace}, where we apply the results of~\cite{scoopdedoo}. A similar treatment of these constructions can be found in~\cite[p.~460-462]{scoopdedoo}.

Retain the setting of Theorem~\ref{theorem:main}. Start with a family $A \rightarrow U$ of PPAVs over $\spec K$. Using standard spreading out techniques as in  \cite[\S8]{EGAIV.3} (see in particular \cite[8.10.5(xii)]{EGAIV.3}, \cite[9.7.7(ii)]{EGAIV.3}, and \cite[17.7.8(ii)]{EGAIV.4}), we can extend the family $A \rightarrow U$ to a family $\mathcal A \rightarrow \mathcal U$, where $\mathcal U$ is an open subscheme of $\mathbb P^r_{\mathcal O_K}$, whose generic fiber over $\spec K \rightarrow \spec \mathcal O_K$ is just $A \rightarrow U$. 
Recall from Section~\ref{subsection:setup} that the term ``family'' means that $\mathcal A \rightarrow \mathcal U$ is smooth and proper with geometrically connected fibers and that $\mathcal A$ is an abelian scheme over $\mathcal U$ with a principal polarization.
This construction is depicted in the following commutative diagram:
\[
\begin{tikzcd}
A \ar[r] \ar[d] & \mathcal{A} \ar[d] \\
U \ar[r] \ar[d] & \mathcal{U} \ar[d] \ar[r, hookrightarrow, "\text{open emb.}"] & \mathbb{P}_{\OO_K}^r \\
\spec K \ar[r] & \spec \OO_K
\end{tikzcd}
\]

Let $Z \defeq \mathbb P_K^r \setminus U$ be the locus where the original family is not defined, and let $\mathcal Z$ denote the closure of $Z$ in $\mathbb P^r_{\mathcal O_K}$. Since the bottom square in the diagram above is Cartesian, each irreducible component of $\mathbb{P}^r_{\OO_K} \setminus \mathcal{U}$ that is not contained in $\mathcal{Z}$ cannot map generically onto $\spec \OO_K$ and must therefore map to a single prime $\pp \in \Sigma_K$. Since there are finitely many irreducible components of $\mathcal{Z}$, the set $S$ of primes $\fp \in \spec \mathcal O_K$ for which $\mathbb P^r_{\mathbb F_\pp} \setminus \mathcal U_{\mathbb F_\pp} \neq \mathcal Z_{\mathbb F_\pp}$ is a finite set. The primes in $S$ can be thought of as the ``bad primes'' for the family: the smoothness of $\mathcal A \to \mathcal U$ implies that any abelian variety $A_u$ for $u \in U(K)$ will have good reduction away from the primes in $S$ and the primes lying under the (finite) intersection $\ol{\{u\}} \cap \mathcal{Z} \subset \mathbb{P}^r_{\OO_K}$.

\subsubsection{Monodromy groups of subfamilies}
Let $m \in \bz$, let $P_m \subset \Sigma_K$ be the set of primes dividing $m$, and let $\spec \OO_{P_m}$ be the complement of $P_m$ in $\spec \OO_K$. Then the base change $\mathcal U_{\OO_{P_m}}$ of $\mathcal{U}$ from $\spec \OO_K$ to $\spec \OO_{P_m}$ is the open subset of $\mathcal U$ on which $\mathcal A [m] \to \mathcal U$ is unramified and hence finite \'etale. Therefore, we obtain a finite \'etale cover $\mathcal{A}_{\OO_{P_m}}[m] \to \mathcal U_{\OO_{P_m}}$ and hence a map $\rho \colon \pi_1(\mathcal U_{\OO_{P_m}}) \to \GSp_{2g}(\bz / m\bz)$ just as in~\ref{subsection:setup}. The original family of interest can be thought of as a subfamily of this one: we have maps $U_{\ol{K}} \to U \to \mathcal{U}_{\OO_{P_m}}$, from which we obtain maps
\[
\begin{tikzcd}
	\pi_1(U_{\ol{K}}) \ar[r] & \pi_1(U) \ar[r] & \pi_1(\mathcal{U}_{\OO_{P_m}}) \ar[r, "\rho"] & \GSp_{2g}(\bz / m\bz).
\end{tikzcd}
\]
\begin{lemma} \label{lem-fill}
	The continuous map $\pi_1(U) \to \pi_1(\mathcal{U}_{\OO_{P_m}})$ is surjective.
\end{lemma}
\begin{proof}
This lemma is a consequence of~\cite[Expos\'e V, Proposition 8.2]{noopsortSGA1Grothendieck1971}; we nonetheless include a proof because it helps illustrate the constructions introduced in this section. It suffices to show that the composition of this map with any surjective continuous map $\pi_1(\mathcal{U}_{\OO_{P_m}}) \to G$ onto a finite group $G$ is surjective. According to \cite[\href{https://stacks.math.columbia.edu/tag/03SF}{Tag 03SF}]{stacks-project}, a finite quotient of the \'etale fundamental group corresponds to a connected finite Galois cover, so let $\mathcal V_m \to \mathcal U_{\OO_{P_m}}$ be the cover corresponding to our chosen surjection. By \cite[\href{https://stacks.math.columbia.edu/tag/0DV5}{Tag 0DV6}]{stacks-project}, the composed map $\pi_1(U) \to \pi_1(\mathcal{U}_{\OO_{P_m}}) \to G$ gives a $\pi_1(U)$-action on $G$ which corresponds to the pulled back cover $(\mathcal V_m)_K \to U$. The latter is connected if and only if the composed map is surjective. Since $\mathcal{V}_m$ is connected and \'etale over $\spec \OO_{P_m}$, it is irreducible, which implies that $(\mathcal{V}_m)_K$ is irreducible (its generic points correspond to those of $\mathcal V_m$), hence connected.
\end{proof}
By \cite[\href{https://stacks.math.columbia.edu/tag/0DV5}{Tag 0DV6}]{stacks-project}, the resulting monodromy representation $\pi_1(U) \to \GSp_{2g}(\bz / m \bz)$ equals that obtained from the pullback of the finite \'etale cover $\mathcal{A}_{\OO_{P_m}}[m] \to \mathcal U_{\OO_{P_m}}$ to $U$. But the pullback is just the family $A[m] \to U$, so this monodromy representation equals $\rho_{A, m}$, and its image equals $H_A(m)$. The lemma therefore implies that the image of the map $\pi_1(\mathcal{U}_{\OO_{P_m}}) \to \GSp_{2g}(\bz / m\bz)$ equals $H_A(m)$. Similarly, the map $\pi_1(U_{\ol{K}}) \to \GSp_{2g}(\bz / m \bz)$ has image equal to $H_A^{\on{geom}}(m)$.

Moreover, for $\mf{p} \in \Sigma_K$ not dividing $m$, we can also consider the subfamilies $\mathcal U_{\ol{\mathbb{F}}_\pp} \to \mathcal U_{\mathbb{F}_\pp} \to \mathcal U_{\OO_{P_m}}$ obtained by extending scalars along the maps $\OO_{P_m} \to \mathbb{F}_\pp \to \ol{\mathbb{F}}_\pp$ for some algebraic closure $\ol{\mathbb{F}}_\pp$ of $\mathbb{F}_\pp$. As before, we obtain maps
\[
\begin{tikzcd}
\pi_1(\mathcal U_{\ol{\mathbb{F}}_\pp}) \ar[r] & \pi_1(\mathcal U_{\mathbb{F}_\pp}) \ar[r] & \pi_1(\mathcal{U}_{\OO_{P_m}}) \ar[r, "\rho"] &  \GSp_{2g}(\bz / m\bz).
\end{tikzcd}
\]
We denote by $H_{A, \pp}(m)$ and $H_{A, \pp}^{\on{geom}}$ the images of the maps $\pi_1(\mathcal U_{\mathbb{F}_\pp}) \to \GSp_{2g}(\bz / m\bz)$ and $\pi_1(\mathcal U_{\ol{\mathbb{F}}_\pp}) \to \GSp_{2g}(\bz / m\bz)$, respectively.

\subsubsection{Notation for Galois \'etale covers}
As explained in the proof of Lemma~\ref{lem-fill}, finite quotients of the \'etale fundamental group correspond to connected finite Galois \'etale covers. We now fix notation for the Galois \'{e}tale covers introduced in the proof of Lemma~\ref{lem-fill} that will be used later in Section~\ref{section:new-proof-of-main-theorem} to state and verify Wallace's criteria \cite{scoopdedoo}.
\begin{itemize}
	\item Let $\mathcal{V}_m$ be the cover of $\mathcal{U}_{\OO_{P_m}}$ corresponding to the map $\pi_1(\mathcal{U}_{\OO_{P_m}}) \to \GSp_{2g}(\bz / m \bz)$.
	\item Let $V_m$ be the cover of $U$ corresponding to the map $\pi_1(U) \to \GSp_{2g}(\bz / m \bz)$.
\end{itemize}
Here, each map from $\pi_1(-)$ to a finite group gives a quotient of $\pi_1(-)$ as its image. By the reasoning of Lemma~\ref{lem-fill}, $V_m = (\mathcal{V}_m)_K$.
\begin{remark}
	The result of Lemma~\ref{lem-fill} is special to the base change $\OO_K \to K$. In general, the other maps of $\pi_1(-)$s will not be surjective, nor will the finite Galois \'etale covers $(V_m)_{\ol{K}}$, $(\mc{V}_m)_{\mathbb F_\pp}$, and $(\mc{V}_m)_{\ol{\mathbb F}_\pp}$ be connected.
\end{remark}

\section{Proof of the Theorem~\ref{theorem:main}} \label{section:new-proof-of-main-theorem}

\def\arraystretch{0.7}
\subsection{Outline of the Proof}
\label{subsection:outline}

With the view of making the proof of Theorem~\ref{theorem:main} more readily comprehensible, we now briefly describe the key aspects of the argument. We encourage the reader to refer to Figure~\ref{figure:proof-schematic} for a schematic diagram illustrating the argument.

We begin in Section~\ref{subsection:big-geometric-monodromy-equivalence}
by 
proving Proposition~\ref{proposition:big-geometric-monodromy-reduction},
showing 
that a non-isotrivial family with big 
monodromy also has big geometric monodromy.
Then, in Section~\ref{toomanynotes}, we
introduce some of the notation and standing assumptions employed in the proof.
In particular, since our family has big geometric monodromy, by
Proposition~\ref{proposition:big-geometric-monodromy-reduction},
we are able to define the constant $C$ in point (b) of Section~\ref{toomanynotes}, which will later be needed to apply
the results of \cite{scoopdedoo} (see Section~\ref{subsubsection:setup-and-statement-of-wallace}).

Then, in Section~\ref{attheskyfall}, we reduce the problem to
checking that for an appropriately chosen integer $M'$ depending on the family, most members of the family have the same mod-$M'$ image as that of the family; and (2) that for all sufficiently large primes $\ell$, most members of the family have the same mod-$\ell$ image as that of the family.

The mod-$M'$ image is dealt with in Section~\ref{subsection:cohen-serre}
using Proposition~\ref{proposition:applying-cohen-serre}, which is the Cohen-Serre version of the Hilbert Irreducibility Theorem.
For dealing with the mod-$\ell$ images,
there are two regimes of primes to consider, a medium
regime and a high regime, when $\ell$ is bigger than a suitable power of $\log B$.
We handle both of these regimes in Section~\ref{subsection:applying-wallace} by applying a result of Wallace, \cite[Theorem 3.9]{scoopdedoo}, for which we must verify the following four conditions:~\ref{assumption-4},~\ref{property-a1},~\ref{property-a2}, and~\ref{property-a3}. The rest of Section~\ref{section:new-proof-of-main-theorem} is devoted to verifying that these conditions hold in our setting.

Conditions~\ref{assumption-4} and~\ref{property-a1}, which are fairly easy to check, are treated in Sections~\ref{subsection:applying-wallace} and~\ref{ver1}. Next, condition~\ref{property-a2}
is dealt with in Section~\ref{ver2} by applying the Grothendieck Specialization Theorem in
Proposition~\ref{proposition:check-B}.
These first three conditions together essentially yield an effective
version of the Hilbert Irreducibility Theorem, which allows us to
check primes $\ell$ in the medium regime. Finally, in Section~\ref{ver3}, we verify condition~\ref{property-a3},
which allows us to dispense with primes in the high regime.
The key input to checking this condition is a recent result of Lombardo,
stated in Theorem~\ref{theorem:lombardo}.
In order to apply Lombardo's result to our setting,
as is done in Proposition~\ref{proposition:check-C}, we must verify two hypotheses and relate the na\"{i}ve height we are using to the Faltings height used in Theorem~\ref{theorem:lombardo}.
The first hypothesis is verified in Lemma~\ref{lemma:davide-1}
using \cite[Proposition 5]{ellenbergEHK:non-simple-abelian-varieties-in-a-family}.
The second hypothesis is a somewhat trickier condition, and we verify it in Lemma~\ref{lemma:davide-2} using the large sieve,
Theorem~\ref{theorem:large-sieve}.
In order to apply the large sieve, we must bound contributions at
each prime, which is done in
Proposition~\ref{proposition:good-v} using a general scheme-theoretic
result of Ekedahl~\cite[Lemma 1.2]{ekedahl1988effective}
together with Proposition~\ref{proposition:good-cover}.
We conclude the section with a brief appendix concerning the relationship between the na\"{i}ve height and the Faltings height (see Lemma~\ref{lemma:height}).

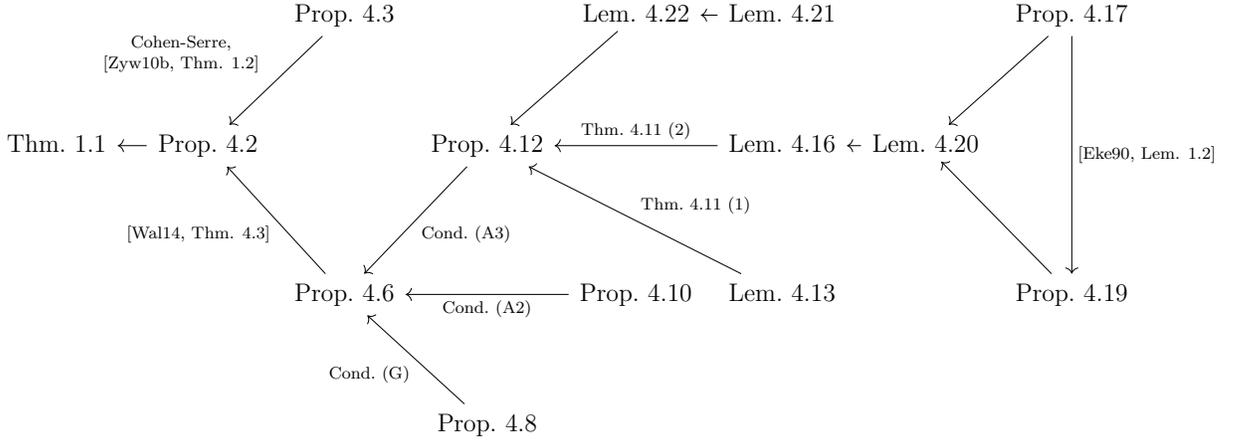
\begin{figure}
	\centering
\begin{equation}
  \nonumber
\begin{tikzpicture}[baseline= (a).base]
\node[scale=.8] (a) at (0,0){
  \begin{tikzcd}[column sep=tiny]
	  \qquad && & \text{Prop.}\nolink{~\ref{proposition:applying-cohen-serre}} \ar{ddl}[swap]{\begin{array}{c}\text{Cohen-Serre,}\\ \text{\nolink{\cite[Thm. 1.2]{zywina2010hilbert}}}\end{array}} & &  \text{Lem.}\nolink{~\ref{lemma:height}} \ar{ldd} & \text{Lem.\nolink{~\ref{lemma: height-0}}} \ar{l} && \text{Prop.\nolink{~\ref{proposition:good-cover}}} \ar{ldd}\ar{dddd}{\text{\nolink{\cite[Lem. 1.2]{ekedahl1988effective}}}}  \\
	  \\
	  \qquad \text{Thm.\nolink{~\ref{theorem:main}}} && \text{Prop.\nolink{~\ref{lemma:ab-cyc}}} \ar{ll} & & \text{Prop.\nolink{~\ref{proposition:check-C}}}  \ar{ddl}{\text{Cond.\nolink{~\ref{property-a3}}}} & & \text{Lem.\nolink{~\ref{lemma:davide-2}}} \ar{ll}[swap]{\text{Thm.\nolink{~\ref{theorem:lombardo}}\hspace{.1cm}(2)}}  & \text{Lem.\nolink{~\ref{lemma:bounding-Y-reduction}}} \ar{l} & \\
\qquad && & & & & &	  \\
\qquad && & \text{Prop.\nolink{~\ref{corollary:applying-wallace}}}\ar{uul}{\text{\nolink{\cite[Thm. 4.3]{scoopdedoo}}}} && \text{Prop.\nolink{~\ref{proposition:check-B}}}  \ar{ll}{\text{Cond.\nolink{~\ref{property-a2}}}}   & \text{Lem.\nolink{~\ref{lemma:davide-1}}}  \ar{uull}[swap]{\text{Thm.\nolink{~\ref{theorem:lombardo}}\hspace{.1cm}(1)}}  & & \text{Prop.\nolink{~\ref{proposition:good-v}}} \ar{uul} \\
&&& & &&
\\
\qquad && & & \text{Prop.\nolink{~\ref{proposition:verifying-assumptions}}}  \ar{uul}{\text{Cond.\nolink{~\ref{assumption-4}}}} & & &
 \end{tikzcd}
};
\end{tikzpicture}
\end{equation}
\caption{
A schematic diagram for the proof of the main theorem, Theorem~\ref{theorem:main}.}
\label{figure:proof-schematic}
\end{figure}
\def\arraystretch{1.3}

\subsection{Equivalence of Big Geometric Monodromy and Big Monodromy}
\label{subsection:big-geometric-monodromy-equivalence}
In the course of the proof, it will be useful to know that our given family
$A \rightarrow U$ not only has big monodromy, but also has big
geometric monodromy.
In particular, this is crucially needed to define the constant $C$ in point
$(b)$ of Section~\ref{toomanynotes}, which is used in 
applying the results of~\cite{scoopdedoo} (see Section~\ref{subsubsection:setup-and-statement-of-wallace}).
We now prove the following result, implying that our given family
has big geometric monodromy.

\begin{proposition}
	\label{proposition:big-geometric-monodromy-reduction}
	Suppose $A \rightarrow U$ is a non-isotrivial family of abelian varieties of relative dimension $g \geq 2$, with $U$ a smooth geometrically connected
	scheme over
	a number field $K$.
	Then, $A$ has big geometric monodromy if and only if it has big 
	monodromy.
\end{proposition}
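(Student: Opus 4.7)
The proposition is an equivalence, and I would handle the two directions separately.

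\textbf{Big geometric monodromy $\Rightarrow$ big monodromy.} This direction requires no non-isotriviality hypothesis. By Remark~\ref{remark:det-rho-is-chi}, the multiplier image satisfies $\on{mult}(H_A) = \chi(G_K)$, which is open in $\widehat{\mathbb{Z}}^\times$ since $[K:\mathbb{Q}] < \infty$. Combining the short exact sequence
\[ 1 \to H_A \cap \Sp_{2g}(\widehat{\mathbb{Z}}) \to H_A \xrightarrow{\on{mult}} \chi(G_K) \to 1 \]
with the inclusion $H_A^{\on{geom}} \subseteq H_A \cap \Sp_{2g}(\widehat{\mathbb{Z}})$, openness of $H_A^{\on{geom}}$ in $\Sp_{2g}(\widehat{\mathbb{Z}})$ forces openness of $H_A \cap \Sp_{2g}(\widehat{\mathbb{Z}})$ therein, and hence openness of $H_A$ in $\GSp_{2g}(\widehat{\mathbb{Z}})$.

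\textbf{Big monodromy $\Rightarrow$ big geometric monodromy.} This is where non-isotriviality enters. Openness of $H_A$ in $\GSp_{2g}(\widehat{\mathbb{Z}})$ implies openness of $H_A \cap \Sp_{2g}(\widehat{\mathbb{Z}})$ in $\Sp_{2g}(\widehat{\mathbb{Z}})$; since $H_A^{\on{geom}}$ is closed, it therefore suffices to show $[H_A \cap \Sp_{2g}(\widehat{\mathbb{Z}}) : H_A^{\on{geom}}] < \infty$. I would establish openness of $H_A^{\on{geom}}$ by verifying the two hypotheses of Lemma~\ref{theorem:adelic-open} applied to $H_A^{\on{geom}} \subset \Sp_{2g}(\widehat{\mathbb{Z}})$: namely, that $(H_A^{\on{geom}})_\ell$ is open in $\Sp_{2g}(\mathbb{Z}_\ell)$ for every $\ell$, and equals $\Sp_{2g}(\mathbb{Z}_\ell)$ for all but finitely many $\ell$.

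For the latter condition, note that $(H_A^{\on{geom}})_\ell$ is a closed normal subgroup of $(H_A)_\ell$ contained in $\Sp_{2g}(\mathbb{Z}_\ell)$, thanks to the exact sequence $1 \to \pi_1(U_{\overline K}) \to \pi_1(U) \to G_K \to 1$. For $\ell$ large, $(H_A)_\ell = \GSp_{2g}(\mathbb{Z}_\ell)$ by bigness, so the mod-$\ell$ reduction $H_A^{\on{geom}}(\ell)$ is normal in $\GSp_{2g}(\mathbb{Z}/\ell)$ and lies in $\Sp_{2g}(\mathbb{Z}/\ell)$. Simplicity of $\PSp_{2g}(\mathbb{Z}/\ell)$ for $g \geq 2$, $\ell \geq 3$ (Lemma~\ref{lemma:simple-quotients-of-symplectic-group}) classifies such normal subgroups as $\{1\}$, $\{\pm \on{id}\}$, or $\Sp_{2g}(\mathbb{Z}/\ell)$; non-isotriviality rules out the first two for almost all $\ell$, after which the lifting result \cite[Theorem 1]{landesman-swaminathan-tao-xu:lifting-symplectic-group} upgrades mod-$\ell$ equality to $\ell$-adic equality. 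For the former condition (prime-by-prime openness), non-isotriviality implies that the connected component of the algebraic Zariski closure of $(H_A^{\on{geom}})_\ell$ in $\Sp_{2g,\mathbb{Q}_\ell}$ agrees with that of $(H_A)_\ell \cap \Sp_{2g}(\mathbb{Z}_\ell)$, yielding finite index.

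The main obstacle will be rigorously extracting these two inputs from the mere hypothesis of non-isotriviality: specifically, proving that $H_A^{\on{geom}}(\ell) \notin \{\{1\}, \{\pm \on{id}\}\}$ for almost all $\ell$, and that $(H_A^{\on{geom}})_\ell$ has finite index in $(H_A)_\ell \cap \Sp_{2g}(\mathbb{Z}_\ell)$ for each individual $\ell$. Both require invoking structural results about the algebraic monodromy of non-isotrivial abelian schemes---essentially the fact that the connected algebraic monodromy group is reductive and contains the geometric part with finite index modulo the multiplier---which is the technical heart of the argument.
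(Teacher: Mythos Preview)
Your easy direction matches the paper exactly.

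For the hard direction, your prime-by-prime strategy via Lemma~\ref{theorem:adelic-open} is genuinely different from the paper's approach, and the second hypothesis of that lemma is where your argument breaks down. The prime-by-prime openness of $(H_A^{\on{geom}})_\ell$ can indeed be obtained along the lines you sketch: normality of $(H_A^{\on{geom}})_\ell$ in the open subgroup $(H_A)_\ell \subset \GSp_{2g}(\mathbb{Z}_\ell)$ forces its Lie algebra to be an ideal in $\mathfrak{sp}_{2g,\mathbb{Q}_\ell}$, hence $0$ or everything; non-isotriviality together with the standard fact that finite geometric monodromy implies isotriviality rules out $0$. But the claim that $H_A^{\on{geom}}(\ell) \notin \{\{1\},\{\pm\id\}\}$ for almost all $\ell$ does not follow from the structural results you invoke. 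Those results concern Zariski closures over $\mathbb{Q}_\ell$, and the gap between ``Zariski-dense in $\Sp_{2g}$'' (or even ``open in $\Sp_{2g}(\mathbb{Z}_\ell)$ for each $\ell$'') and ``surjective mod $\ell$ for almost all $\ell$'' is precisely the content of the proposition. Concretely, $\prod_\ell \Gamma_\ell$ is a closed normal subgroup of $\Sp_{2g}(\widehat{\mathbb{Z}})$ whose $\ell$-adic projections are all open yet whose mod-$\ell$ reductions are all trivial; so group-theoretic normality and per-prime openness alone cannot force the conclusion. You need to exploit something specific to $H_A^{\on{geom}}$ beyond these properties.

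The paper supplies exactly that missing input, via a completely different mechanism. It first slices down to $U \subset \mathbb{P}^1_K$ so that the comparison theorem identifies $H_A^{\on{geom}}$ with the profinite closure of the image $M_A^{\on{geom}} \subset \Sp_{2g}(\mathbb{Z})$ of the \emph{topological} monodromy representation. Forming the $\GSp_{2g}(\widehat{\mathbb{Z}})$-normal closure $G$ of $H_A^{\on{geom}}$, one reduces to showing $G$ has finite index in $\Sp_{2g}(\widehat{\mathbb{Z}})$; if not, the induced map $\Sp_{2g}(\mathbb{Z}) \to \Sp_{2g}(\widehat{\mathbb{Z}})/G$ has kernel a normal subgroup of infinite index in the \emph{discrete} group $\Sp_{2g}(\mathbb{Z})$. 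The Margulis normal subgroup theorem then forces this kernel, and hence $M_A^{\on{geom}}$, into the center $\{\pm 1\}$, contradicting non-isotriviality. The passage to $\Sp_{2g}(\mathbb{Z})$ is what makes a normal-subgroup dichotomy available; no such dichotomy holds in $\Sp_{2g}(\widehat{\mathbb{Z}})$, which is why your purely profinite approach stalls.
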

\begin{proof}
We first show the easier direction: if the family $A \to U$ has big geometric 	monodromy then $A \to U$ also has big monodromy, in the sense that $\mono_A$ is open in $\GSp_{2g}(\wh{\ZZ})$. To see this, consider the exact sequence
    \[
    	\begin{tikzcd}
        	0 \ar{r} & \Sp_{2g}(\zh) \ar{r} & \GSp_{2g}(\zh) \ar{r}{\on{mult}} & \zh^\times \ar{r} & 0.
        \end{tikzcd}
    \]
    Since $\mono_A^{\on{geom}} \subset \mono_A$, the big geometric monodromy assumption tells us that $\mono_A \cap \Sp_{2g}(\zh)$ is open in $\Sp_{2g}(\zh)$. It therefore suffices to show that $\on{mult}(\mono_A)$ is open in $\zh^\times$. But $\on{mult}(\mono_A) = \chi(G_K)$, as mentioned in Remark~\ref{remark:det-rho-is-chi}, and $\chi(G_K)$ has finite index because $K/\mathbb Q$ \mbox{has finite degree.}

It only remains to prove that if the family has big monodromy
	and is non-isotrivial, it has big geometric monodromy.
	To show this, from the exact sequence
\begin{equation}
		\nonumber
		\begin{tikzcd}
			1 \ar {r} & \pi_1(U_{\overline K})  \ar {r} & \pi_1(U) \ar {r} & \pi_1(K) \ar {r} & 1
		\end{tikzcd}\end{equation}
	$\pi_1(U_{\overline K}) \subset \pi_1(U)$ is normal.
	Therefore, $\mono_A^{\on{geom}}$ is a normal subgroup of $\mono_A$, and hence also a normal subgroup of $\mono_A \cap \Sp_{2g}(\zh)$.
	Let $\psi: \Sp_{2g}(\bz) \ra \Sp_{2g}(\zh)$ denote the natural profinite completion map.	
	Since $\mono_A^{\on{geom}} \subset \mono_A \cap \Sp_{2g}(\zh)$ is normal, 
	it follows that $\psi^{-1}(\mono_A^{\on{geom}}) \subset \psi^{-1}(\mono_A \cap \Sp_{2g}(\zh))$ is normal.
	Since $\mono_A$ has finite index in $\GSp_{2g}(\zh)$, $\psi^{-1}(\mono_A \cap \Sp_{2g}(\zh))$
	has finite index in $\Sp_{2g}(\bz)$. Since $g \geq 2$ (so that $\Sp_{2g}(\bz)$ has rank at least $2$),
	by the Margulis normal subgroup theorem, 
	(see, for example \cite[Theorem 17.1.1]{morris:introduction-to-arithmetic-groups},)
	$\psi^{-1}(\mono_A^{\on{geom}})$ either has finite index in $\psi^{-1}(\mono_A \cap \Sp_{2g}(\zh))$
	or is finite.
	We will show that in the first case $A$ has big geometric monodromy and in the second case $A$ is isotrivial.

	In the case that $\psi^{-1}(\mono_A^{\on{geom}})$ has finite index in $\psi^{-1}(\mono_A \cap \Sp_{2g}(\zh))$, 
$\psi^{-1}(\mono_A^{\on{geom}})$ also has finite index in $\Sp_{2g}(\bz)$. 
Then, since $\mono_A^{\on{geom}}$ is closed,
the finite set
$\Sp_{2g}(\bz)/\psi^{-1}(\mono_A^{\on{geom}})$ is dense in the profinite
space $\Sp_{2g}(\zh)/H_A^{\on{geom}}$.
It follows that $\mono_A^{\on{geom}}$ also has finite index
in $\Sp_{2g}(\zh)$, meaning $A$ has big geometric monodromy.

	To conclude the proof, it only remains to show that if $\psi^{-1}(H_A^{\on{geom}})$ is finite, then $A$ is isotrivial.
	In this case, let $M_A^{\on{geom}}$ denote the image of the topological monodromy
	representation $\pi_1^{\on{top}}(U_{\bc}) \rightarrow \Sp_{2g}(\bz)$.
	By \cite[Expos\'e XIII, Proposition 4.6]{SGA1Grothendieck1971}, we have 
	$\pi_1(U_{\bc}) \simeq \pi_1(U_{\overline K})$, and therefore the comparison theorem tells us that $\mono_A^{\on{geom}}$
	is the profinite completion of $M_A^{\on{geom}}$.
	This implies $M_A^{\on{geom}} \subset \psi^{-1}(\mono_A^{\on{geom}})$
	and so $M_A^{\on{geom}}$ is finite. It follows that $H_A^{\on{geom}}$ is finite, being the profinite completion
	of $M_A^{\on{geom}}$.
	After a making a finite base change, we may assume
	$H_A^{\on{geom}}$ is trivial.
	Then, it is a standard fact that $A$ is isotrivial when its monodromy
	representation is trivial. For example, this follows from
	\cite{grothendieck:un-theoreme-sur-les-homomorphismes}.
\end{proof}

\subsection{Notation and Standing Assumptions}\label{toomanynotes}

Before proceeding with the proof, we set some notation and assumptions, which will remain in place for the remainder of this section.
\begin{enumerate}
\item As mentioned in Remark~\ref{remkydoo}, the case where $g=1$ is handled in~\cite[Theorem 7.1]{zywina2010hilbert}, so we will restrict our consideration to the case where $g \geq 2$.
\item 	
    Since we are assuming that $A \to U$ has big monodromy, it follows that $A \to U$ has big geometric monodromy, by Proposition~\ref{proposition:big-geometric-monodromy-reduction}. Define $C$ to be the smallest
integer bigger than $2$, depending only on $U$, with the property that for all primes $\ell > C$ we have $\mono_A^{\on{geom}}(\ell) = \Sp_{2g}(\mathbb Z/\ell \ZZ)$ and $\mono_A(\ell) = \GSp_{2g}(\mathbb Z/\ell \ZZ)$.
\item Using \cite[Proposition 6.1]{zywina2010hilbert} and the explanation
	given after the statement of
	\cite[Theorem 7.1]{zywina2010hilbert}, one readily checks that in Theorem~\ref{theorem:main}, the asymptotic statement for $K$-valued points (i.e., points in $U(K)$) can be deduced immediately from the statement for \emph{lattice points} (i.e., points in $U(K) \cap \OO_K^r$). In what follows, we will work with $K$-valued points or lattice points depending on what is most convenient.
\item Let $K^{\on{cyc}} \subset \ol{K}$ denote the maximal cyclotomic extension of $K$, and let $K^{\ab} \subset \ol{K}$ denote the maximal abelian extension of $K$.
\item In what follows, for a subgroup $H$ of a topological group $G$, let $[H,H]$ denote the \emph{closure} of the usual commutator subgroup.
\end{enumerate}

\subsection{Main Body of the Proof} \label{attheskyfall}

We begin by reducing the proof of Theorem~\ref{theorem:main} to proving Proposition~\ref{lemma:ab-cyc}.
\begin{proof}[Proof of Theorem~\ref{theorem:main} assuming Proposition~\ref{lemma:ab-cyc}]
As argued in~\cite[Proof of Theorem 7.1]{zywina2010hilbert}, for any $u \in U(K)$ we have
\begin{align*}
[\mono_A :\mono_{A_u}] & = [\mono_A \cap \Sp_{2g}(\wh{\ZZ}) : \rho_{A_u}(\Gal(\ol{K}/K^{\on{cyc}}))].
\intertext{In the case that $K = \QQ$, the Kronecker-Weber Theorem tells us that $\bq^{\cyc} = \bq^{\ab}$, so we have}
[\mono_A : \mono_{A_u}] & = \delta_{\QQ} \cdot [[\mono_A, \mono_A] : \rho_{A_u}(\Gal(\ol{\bq}/\bq^{\ab}))],
\end{align*}
where $\delta_{\QQ}$ is the index of $[\mono_A, \mono_A]$ in $\mono_A \cap \Sp_{2g}(\wh{\ZZ})$. Then Theorem~\ref{theorem:main} follows immediately from point (c) of Section~\ref{toomanynotes} and the following proposition.
\end{proof}
	\begin{proposition}\label{lemma:ab-cyc}
		Let $B, n > 0$. We have the following asymptotic statements, where the implied constants depend only on $U$ and $n$:
		\begin{enumerate}
			\item[\customlabel{asymptotic-commutator}{(1)}] For every number field $K$,
			\[
				\frac{ |\{ u\in U(K) \cap \mc{O}^r_K: \lVert u \rVert \le B,\, \rho_{A_u}(\on{Gal}(\ol{K} / K^{\on{ab}})) = [\mono_A, \mono_A] \}| }{ |\{ u\in U(K) \cap \mc{O}^r_K: \lVert u \rVert \le B \}| } = 1 + O(( \log B )^{-n}).
			\]
		\item[\customlabel{asymptotic-symplectic}{(2)}] Furthermore, if $K \neq \bq$,
			\[
				\frac{ |\{ u\in U(K) \cap \mc{O}^r_K: \lVert u \rVert \le B,\, \rho_{A_u}(\on{Gal}(\ol{K} / K^{\on{cyc}})) = \mono_A \cap \Sp_{2g}(\zh) \}| }{ |\{ u\in U(K) \cap \mc{O}^r_K: \lVert u \rVert \le B \}| } = 1 + O(( \log B )^{-n}).
			\]
		\end{enumerate}
	\end{proposition}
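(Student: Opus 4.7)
My approach is to apply Proposition~\ref{theorem:adelic-surjective-subset} with $G = [\mono_A, \mono_A]$ for part~\ref{asymptotic-commutator} and $G = \mono_A \cap \Sp_{2g}(\zh)$ for part~\ref{asymptotic-symplectic}. Both are open in $\Sp_{2g}(\zh)$: the second by the big monodromy hypothesis, and the first by Proposition~\ref{theorem:commutator-open}. Proposition~\ref{theorem:adelic-surjective-subset} then yields an integer $M$ depending only on $A \to U$ such that a closed subgroup $\wall \subset G$ equals $G$ if and only if $\wall(M) = G(M)$ and $\wall(\ell) = \Sp_{2g}(\mathbb{Z}/\ell\mathbb{Z})$ for every prime $\ell \nmid M$. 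Specializing $\wall = \rho_{A_u}(\on{Gal}(\overline{K}/K^{\ab}))$ and $\rho_{A_u}(\on{Gal}(\overline{K}/K^{\cyc}))$ respectively reduces both statements to bounding the density of $u \in U(K) \cap \mathcal{O}_K^r$ with $\|u\| \le B$ for which one of these finitely-many-for-each-$\ell$ conditions fails.

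To control the mod-$M$ condition I would invoke the effective Cohen–Serre version of the Hilbert Irreducibility Theorem (applied as Proposition~\ref{proposition:applying-cohen-serre}) to the finite \'etale cover of $U$ attached to the surjection $\pi_1(U) \twoheadrightarrow \mono_A(M)$; this yields an error of size $O((\log B)^{-n})$ with constants depending only on $U$, $M$, and $n$, and hence contributes an acceptable bound at the finitely many primes dividing $M$.

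The more substantial work is summing over primes $\ell \nmid M$, which I would package into a single application of Wallace's effective Hilbert irreducibility (Proposition~\ref{corollary:applying-wallace}). To invoke it I must verify conditions~\ref{assumption-4},~\ref{property-a1},~\ref{property-a2}, and~\ref{property-a3} for the spread-out cover $\mathcal{V}_\ell \to \mathcal{U}_{\mathcal{O}_{P_\ell}}$ of Section~\ref{subsection:notation-for-families}. Conditions~\ref{assumption-4} and~\ref{property-a1} follow routinely from the construction. Condition~\ref{property-a2}, comparing the monodromy of a geometric special fiber to that of the geometric generic fiber, I would establish via the Grothendieck Specialization Theorem (as in Proposition~\ref{proposition:check-B}), using the big geometric monodromy guaranteed by Proposition~\ref{proposition:big-geometric-monodromy-reduction}. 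Condition~\ref{property-a3}, which bounds the contribution of very large primes $\ell$, I would verify (as in Proposition~\ref{proposition:check-C}) by invoking Lombardo's theorem (Theorem~\ref{theorem:lombardo}) to bound the index of the mod-$\ell$ image by a power of the Faltings height of $A_u$, converting to the naive height $\|u\|$ via Lemma~\ref{lemma:height}, and then applying the large sieve (Theorem~\ref{theorem:large-sieve}) with local factor estimates obtained from Proposition~\ref{proposition:good-v}, which in turn rests on Ekedahl's scheme-theoretic lemma and Proposition~\ref{proposition:good-cover}.

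The hardest part, I expect, will be the verification of condition~\ref{property-a3}: one must balance Wallace's sieve bound (which is effective for $\ell$ not too large) against Lombardo's Faltings-height bound (which becomes effective precisely once $\ell$ is large compared to the height), calibrate the large-sieve weights so that the contribution at each prime is bounded from below uniformly, and match the two regimes at a threshold of the form $(\log B)^s$ for suitable $s = s(n)$ so that the summed error over all $\ell \nmid M$ stays $O((\log B)^{-n})$ with constants depending only on $A \to U$ and $n$.
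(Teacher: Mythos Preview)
Your outline for part~\ref{asymptotic-commutator} matches the paper's proof closely: apply Proposition~\ref{theorem:adelic-surjective-subset} with $G=[\mono_A,\mono_A]$, then bound the mod-$M$ failure via Proposition~\ref{proposition:applying-cohen-serre} and the mod-$\ell$ failures (for $\ell\nmid M$) via Proposition~\ref{corollary:applying-wallace}. The paper adds one small technical step you omit: it replaces $M$ by a multiple $M'$ divisible by every prime below the constant $C$, so that Proposition~\ref{corollary:applying-wallace} applies only to primes $\ell>C$. The paper also makes explicit the implication you need for the mod-$\ell$ step, namely that for $\ell\ge 3$ the perfectness of $\Sp_{2g}(\bz/\ell\bz)$ gives $\rho_{A_u,\ell}(\Gal(\ol K/K^{\ab}))\neq\Sp_{2g}(\bz/\ell\bz)\Rightarrow \mono_{A_u}(\ell)\not\supset\Sp_{2g}(\bz/\ell\bz)$. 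Your lengthy discussion of verifying \ref{assumption-4}--\ref{property-a3} is not part of this proof; the paper packages that entirely inside Proposition~\ref{corollary:applying-wallace} and treats it as a black box here.

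Your treatment of part~\ref{asymptotic-symplectic}, however, diverges from the paper and has a gap. The paper does \emph{not} run the same argument with $G=\mono_A\cap\Sp_{2g}(\zh)$; instead it deduces \ref{asymptotic-symplectic} from \ref{asymptotic-commutator} by invoking \cite[Proposition~7.7]{zywina2010hilbert}, using that $[\mono_A,\mono_A]$ is open and normal in $\mono_A\cap\Sp_{2g}(\zh)$. This is precisely where the hypothesis $K\neq\bq$ enters, and your symmetric approach never explains why that hypothesis is needed. More concretely, your direct approach requires the implication $\mono_{A_u}(M)=\mono_A(M)\Rightarrow(\mono_{A_u}\cap\Sp_{2g}(\zh))(M)=(\mono_A\cap\Sp_{2g}(\zh))(M)$ to feed the mod-$M$ failure into Proposition~\ref{proposition:applying-cohen-serre}, but intersection with $\Sp_{2g}(\zh)$ and reduction mod $M$ do not commute in general, so this implication is not automatic. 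You would need either to justify it or to follow the paper's route through part~\ref{asymptotic-commutator}.
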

    \begin{remark}
 	Proposition~\ref{lemma:ab-cyc} is a generalization of~\cite[Proposition 7.9]{zywina2010hilbert} from the case $g = 1$ to all dimensions. We shall prove it assuming Proposition~\ref{proposition:applying-cohen-serre} and Proposition~\ref{corollary:applying-wallace}. The basic idea behind the argument is to reduce the problem of studying the (global) monodromy groups to one of studying the mod-$M'$ and mod-$\ell$ monodromy groups.
    \end{remark}
	\begin{proof}[Proof assuming Proposition~\ref{proposition:applying-cohen-serre} and Proposition~\ref{corollary:applying-wallace}]
		Assuming point (1), the proof of point (2) is completely analogous to the proof of~\cite[Proposition 7.9(ii)]{zywina2010hilbert}, which consists of two key steps. The first is the fact that $[\mono_A, \mono_A]$ is an open normal subgroup of $\mono_A \cap \Sp_{2g}(\wh{\ZZ})$, which follows from Proposition~\ref{theorem:commutator-open}. The second is~\cite[Proposition 7.7]{zywina2010hilbert}, which is a variant of Hilbert's Irreducibility Theorem and does not depend in any way on the context of elliptic curves (with which~\cite[Section 7]{zywina2010hilbert} is concerned). It therefore suffices to prove point (1).

Since $\Gal(\ol{K}/K^{\ab}) = [G_K, G_K]$, it follows by the continuity of $\rho_{A_u}$ and the compactness of profinite groups that $\rho_{A_u}(\Gal(\overline K/K^{\ab})) = [\mono_{A_u}, \mono_{A_u}]$.
Thus $\rho_{A_u}(\Gal(\overline K/K^{\ab}))$ is a closed subgroup of $\left[\mono_A, \mono_A \right]$. Moreover, by Proposition~\ref{theorem:commutator-open}, $[ \mono_A, \mono_A ]$ is an open subgroup of $\Sp_{2g}(\zh)$, so we may apply Proposition~\ref{theorem:adelic-surjective-subset} with $G = [ \mono_A, \mono_A ]$ and $H = \rho_{A_u}(\Gal(\overline K/K^{\ab}))$. In so doing, we obtain a positive integer $M$ so that the only closed subgroup of $[\mono_A, \mono_A]$ whose mod-$M$ reduction equals $[\mono_A, \mono_A](M) = [\mono_A(M), \mono_A(M)]$ and whose mod-$\ell$ reduction equals $\Sp_{2g}(\bz/\ell \ZZ)$ for every prime number $\ell \nmid M$ is $[\mono_A, \mono_A]$ itself. The same property is true when $M$ is replaced by any multiple $M'$ of $M$, and we choose a multiple $M'$ which is divisible by all primes less than $C$, where $C$ is defined as in point (b) of Section~\ref{toomanynotes}. The defining property of $M'$ then implies that
\begin{align}
			& \frac{|\{ u \in U(K) \cap \mathcal O_K^r : \|u \| \leq B,\, \rho_{A_u}(\Gal(\overline K/K^{\ab})) \neq [ \mono_A, \mono_A ] \}|}{|\{ u \in U(K) \cap \mathcal O_K^r : \|u \| \leq B \} |} \leq \nonumber \\
			& \qquad  \frac{|\{ u \in U(K) \cap \mathcal O_K^r : \|u \| \leq B,\, \rho_{A_u, M'}(\Gal(\overline K/K^{\ab})) \neq [ \mono_A(M'), \mono_A(M') ] \}|}{|\{ u \in U(K) \cap \mathcal O_K^r : \|u \| \leq B \} |}\,\,+ \raisetag{-0.6cm}{\label{equation:goursat-m-bound}}\\
			& \qquad \frac{|\{ u \in U(K) \cap \mathcal O_K^r : \|u \| \leq B,\, \rho_{A_u, \ell}(\Gal(\overline K/K^{\ab})) \neq \Sp_{2g}(\mathbb Z/\ell \mathbb Z) \text{ for some } \ell \nmid M' \}|}{|\{ u \in U(K) \cap \mathcal O_K^r : \|u \| \leq B \} |} \raisetag{-0.6cm}{\label{equation:goursat-l-bound}}.
\end{align}
The rest of this section is devoted to finding upper bounds for~\eqref{equation:goursat-m-bound} and~\eqref{equation:goursat-l-bound}. To bound~\eqref{equation:goursat-m-bound}, notice that we have
$$\rho_{A_u, M'}(\Gal(\overline K/K^{\ab})) \neq [\mono_A(M'), \mono_A(M')] \Longrightarrow \mono_{A_u}(M') \neq \mono_A(M').$$ It then follows from Proposition~\ref{proposition:applying-cohen-serre} that~\eqref{equation:goursat-m-bound} is bounded by $O( (\log B)/B^{\left[ K:\mathbb Q \right]/2} )$. To bound~\eqref{equation:goursat-l-bound}, notice that for $\ell \geq 3$ we have
$$\rho_{A_u, \ell}(\Gal(\ol{K}/K^{\ab})) \neq \Sp_{2g}(\bz / \ell \ZZ) \Longrightarrow \mono_{A_u}(\ell) \not\supset \Sp_{2g}(\bz / \ell \ZZ) ,$$ because~\cite[Proposition 1(a)]{landesman-swaminathan-tao-xu:lifting-symplectic-group} tells us that $\Sp_{2g}(\bz /\ell \ZZ)$ has trivial abelianization for $\ell \ge 3$. Since $C \geq 3$ by definition, it follows from Proposition~\ref{corollary:applying-wallace} that~\eqref{equation:goursat-l-bound} is $O(( \log B )^{-n} )$, since $\ell \nmid M'$ implies that $\ell > C$.
Combining the above estimates completes the proof of point (1).
\end{proof}
It now remains to bound the terms~\eqref{equation:goursat-m-bound} and~\eqref{equation:goursat-l-bound}.

\subsection{Bounding the Contribution of~\eqref{equation:goursat-m-bound}} \label{subsection:cohen-serre}

The next result is the means by which we bound~\eqref{equation:goursat-m-bound}; it is an immediate corollary of the Cohen-Serre version of Hilbert's Irreducibility Theorem (see~\cite[Theorem 1.2]{zywina2010hilbert}) since
the set in the numerator of~\eqref{equation:cohen-serre}
is a ``thin set.''

\begin{proposition} \label{proposition:applying-cohen-serre}
		For every integer $M' \ge 2$, we have
		\begin{align}
			\label{equation:cohen-serre}
			\frac{|\{u \in U(K) \cap \mc{O}^r_K : \lVert u \rVert \le B,\, \mono_{A_u}(M') \neq \mono_A(M') \} |}{ |\{ u\in U(K) \cap \mc{O}^r_K: \lVert u \rVert \le B \}| } \ll \frac{ \log B}{B^{[K:\bq]/2}},
		\end{align}
where the implied constant depends only in $U$ and $M'$.\footnote{For functions $f,g$ in the variable $B$, we say that $f(B) \ll g(B)$ if there exists a constant $c > 0$ such that $|f(B)| \leq c \cdot |g(B)|$ for all sufficiently large $B$.}
\end{proposition}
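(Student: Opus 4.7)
The plan is to recognize the set in the numerator of~\eqref{equation:cohen-serre} as a thin set in $U(K)$ in the sense of Serre, and then invoke the Cohen--Serre quantitative form of Hilbert's Irreducibility Theorem (\cite[Theorem 1.2]{zywina2010hilbert}) to obtain the stated bound. Concretely, I first replay the construction from Section~\ref{subsection:notation-for-families} with $m = M'$: the mod-$M'$ representation $\rho_{A,M'}\colon \pi_1(U) \to \GSp_{2g}(\mathbb{Z}/M'\mathbb{Z})$ factors through the finite quotient $\mono_A(M')$, and this finite quotient corresponds to a connected finite Galois \'etale cover $V_{M'} \to U$ with Galois group exactly $\mono_A(M')$.

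Next, I interpret the condition $\mono_{A_u}(M') \neq \mono_A(M')$ geometrically. For $u \in U(K)$, pulling back $\rho_{A,M'}$ along $\iota\colon \spec K \to U$ shows that $\mono_{A_u}(M')$ is identified (up to $\mono_A(M')$-conjugacy) with the image of the decomposition homomorphism at $u$, i.e.\ with the Galois group of the fiber $(V_{M'})_u$ as a $K$-scheme. In particular, $\mono_{A_u}(M') \neq \mono_A(M')$ if and only if $(V_{M'})_u$ fails to be connected of the full expected Galois group, which for a Galois cover is equivalent to $(V_{M'})_u$ having a $K$-point, i.e.\ $u$ lifting to $V_{M'}(K)$. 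Since $V_{M'}\to U$ is a non-trivial connected finite \'etale cover (using that $\mono_A(M') \neq \{1\}$ whenever the statement is not vacuous, and handling the trivial case separately), the image $\pi(V_{M'}(K)) \subset U(K)$ is a type-(2) thin set in Serre's sense.

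Then I apply~\cite[Theorem 1.2]{zywina2010hilbert}, which says precisely that for any thin subset $\Omega \subset \mathbb{P}^r(K)$, the number of lattice points in $\Omega$ of height at most $B$ is $O(B^{r[K:\mathbb{Q}] - [K:\mathbb{Q}]/2} \log B)$, while the total number of lattice points in $U(K) \cap \mathcal{O}_K^r$ of height at most $B$ grows like $B^{r[K:\mathbb{Q}]}$ (by a standard lattice-point count in a box, using that $U$ is dense open in $\mathbb{P}^r_K$). Dividing yields the claimed bound $O(\log B / B^{[K:\mathbb{Q}]/2})$, with implied constant depending only on $U$ and $M'$ (through the cover $V_{M'}$).

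The only step requiring care is the identification of the bad locus with a thin set: one must check that the fiber-by-fiber description of $\mono_{A_u}(M')$ via the cover $V_{M'}\to U$ is correct, which comes down to the functoriality of $\pi_1$ with respect to the base-change $\iota\colon \spec K \to U$ and the observation that for a connected Galois cover the decomposition group at a $K$-point is a proper subgroup exactly when the fiber splits off a $K$-rational component. Everything else is an invocation of results already in the literature, so I expect no substantial obstacle beyond this bookkeeping.
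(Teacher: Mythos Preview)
Your approach matches the paper's: recognize the numerator as a thin set and invoke the Cohen--Serre form of Hilbert irreducibility via~\cite[Theorem 1.2]{zywina2010hilbert}. The paper gives no further detail than this.

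However, your thinness argument contains an incorrect equivalence. You write that for a Galois cover, the fiber $(V_{M'})_u$ failing to have full Galois group is \emph{equivalent} to $(V_{M'})_u$ having a $K$-point. This is false: if the decomposition group at $u$ is a proper nontrivial subgroup $H \subsetneq G = \mono_A(M')$, the fiber splits into $[G:H]$ components, each isomorphic to $\spec L$ for a Galois extension $L/K$ with group $H$, and there is no $K$-rational point on $V_{M'}$ above $u$ unless $H$ is trivial. So the bad locus is generally strictly larger than the image of $V_{M'}(K)$.

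The standard fix is to pass to the intermediate covers. For each maximal proper subgroup $H \subsetneq G$, form the quotient cover $V_{M'}/H \to U$, which is connected finite \'etale of degree $[G:H] \ge 2$. A point $u \in U(K)$ has $\mono_{A_u}(M')$ contained in (a conjugate of) $H$ if and only if $u$ lifts to a $K$-point of $V_{M'}/H$. Since $G$ is finite, there are only finitely many maximal subgroups, and the bad locus is the finite union over these $H$ of the images of $(V_{M'}/H)(K)$ in $U(K)$, which is thin of type~(2). With this correction your argument goes through and yields exactly the paper's proof.
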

	
\subsection{Bounding the Contribution of~\eqref{equation:goursat-l-bound}} \label{subsection:applying-wallace}
	
To complete the proof of Theorem~\ref{theorem:main}, it remains to bound~\eqref{equation:goursat-l-bound}. We do this in Proposition~\ref{corollary:applying-wallace}, which relies on a strong version of Hilbert's Irreducibility Theorem due to Wallace, namely~\cite[Theorem 3.9]{scoopdedoo}. Before we can state and apply Wallace's result, we must introduce the various conditions upon which it depends. The setup detailed in~\cite[Section 3.2]{scoopdedoo} applies in a more general context than the one described below, but we specialize our discussion for the sake of brevity.

\subsubsection{Setup and Statement of~\cite[Theorem 3.9]{scoopdedoo}}
\label{subsubsection:setup-and-statement-of-wallace}

We start by introducing some notation to help us count points $u \in U(K)$ whose associated monodromy groups $\mono_{A_u}$ are not maximal. Let $B > 0$, and make the following two definitions:
		\begin{align*}
		E_\ell(B) & \defeq \{u \in U(K) : \on{Ht}(u) \le B,\, \mono_A^{\on{geom}}(\ell) \not\subset \mono_u(\ell) \}, \text{ and}\\
		E(B) &\defeq \bigcup_{\text{ prime }\ell > C} E_{\ell}(B),
	\end{align*}
where $C$ is defined as in point (b) of Section~\ref{toomanynotes}. The set $E_\ell(B)$ should be thought of as the set of exceptional points of height bounded by $B$ for the $\ell$-adic representation, and the set $E(B)$ should likewise be thought of as the set of points of height bounded by $B$ that are exceptional for some $\ell > C$.
Note in particular that for any $\ell > C$ we have $\mono_A(\ell) / \mono_A^{\on{geom}}(\ell) \simeq (\ZZ/\ell \ZZ)^\times$; this condition is important for the proof of~\cite[Theorem 3.9]{scoopdedoo} to go through, so we impose the following restriction:
\vspace*{0.1cm}
\begin{align}\label{thatswhatpeoplesay}
\text{\emph{For the rest of this section, we will maintain $\ell > C$ as a standing assumption.}}
\end{align}
For ease of notation, we redefine the set $S \subset \Sigma_K$ of ``bad'' primes, defined in Section~\ref{subsection:notation-for-families}, by adjoining to it all primes $\ell < C$.

\begin{remark}\label{thanksdavide}
Note that our definition of the exceptional set $E(B)$ differs slightly from that given in~\cite[Theorem 1.1]{scoopdedoo}, where it is defined to be the union over \emph{all} primes $\ell$ of the $\ell$-adic exceptional sets $E_{\ell}(B)$. This difference is inconsequential, as we can always deal with a finite collection of primes using Proposition~\ref{proposition:applying-cohen-serre}. Indeed, this is exactly why we replace $M$ by a multiple $M'$ divisible by all primes $\ell < C$ in the proof of Proposition~\ref{lemma:ab-cyc}.
\end{remark}

Now that we have introduced the setup needed for stating~\cite[Theorem 3.9]{scoopdedoo}, we declare the four criteria required for the theorem to be applied. For this, it will now be crucial to recall notation from the geometric setup detailed in Section~\ref{subsection:notation-for-families}.
\begin{condition} \label{definition:assumptions}
Recall from Section~\ref{subsection:notation-for-families} that $P_m$ denotes the set of primes of $\mathcal{O}_K$ dividing an integer $m$ and that $V_\ell$ denotes the connected Galois \'etale cover of $U$ giving rise to the monodromy group $H_A(\ell)$ for a prime $\ell$. In order to apply~\cite[Theorem 3.9]{scoopdedoo}, we need to verify the following geometric condition on the covers $V_\ell \to U$ as $\ell$ ranges through the primes greater than $C$:
\begin{enumerate}
	\item[\customlabel{assumption-4}{(G)}] Let $\zeta_\ell$ denote a primitive $\ell^{\mathrm{th}}$ root of unity. Each connected component of the base-change $(V_{\ell})_{K(\zeta_\ell)}$ is geometrically irreducible.
\end{enumerate}
We also need the following three asymptotic conditions concerning the monodromy groups $\mono_A(\ell)$, $\mono_A^{\on{geom}}(\ell)$, and $\mono_{A, \pp}(\ell)$ for~\cite[Theorem 3.9]{scoopdedoo} to be applied:
	\begin{enumerate}
		\item[\customlabel{property-a1}{(A1)}] There exist constants $\beta_1, \beta_2 > 0$ such that
		\[
			| \mono_A(\ell) | \ll \ell^{\beta_1} \quad \text{and} \quad | \{\text{conjugacy classes of } \mono_A(\ell) \} | \ll \ell^{\beta_2},
		\]
        where the implied constants depend only on $U$.
		\item[\customlabel{property-a2}{(A2)}] There exists a constant $\beta_3 > 0$ such that
		\[
			\geometricprimes \ell \defeq |\{ \text{prime } \mf{p} \subset \OO_K : \mf{p} \in S \cup P_\ell \text{ or } \mono_{A, \pp}^{\on{geom}}(\ell) \not\simeq \mono_A^{\on{geom}}(\ell) \} | \ll \ell^{\beta_3},
		\]
        where the implied constant depends only on $A \rightarrow U$.
		\item[\customlabel{property-a3}{(A3)}] For each $B > 0$, there exists a subset
		\[
			F(B) \subset \{u \in U(K) : \on{Ht}(u) \le B\}
		\]
		and constants $c, \gamma > 0$ depending only on $A \rightarrow U$ such that
		\begin{align*}
		\lim_{B \to \infty} \frac{|F(B)|}{|\{u \in U(K) : \on{Ht}(u) \le B\}|} = 1	\quad \text{and} \quad F(B) \cap E(B) \subset \bigcup_{\ell \le c (\log B)^\gamma} E_{ \ell}(B).
		\end{align*}
	\end{enumerate}
\end{condition}

We are now in a position to state Wallace's main result:
\begin{theorem}[\protect{\cite[Theorem 3.9]{scoopdedoo}}] \label{theorem:wallace-hit}
	Suppose that condition \ref{assumption-4} holds and that conditions \ref{property-a1}--\ref{property-a3} hold with the values $\beta_1, \beta_2, \beta_3, \gamma$.\footnote{The constant $c$ from condition~\ref{property-a3} is absorbed into the implied constant in~\eqref{thepressure'sonyoufeelityouvegotitallbelieveitthistimeforafricawhatever}.} Then
we have the following bound on the proportion of exceptional points of height bounded by $B$: 	\begin{equation}\label{thepressure'sonyoufeelityouvegotitallbelieveitthistimeforafricawhatever}
			\frac{|E(B)|}{|\{u \in U(K) : \on{Ht}(u) \le B\}|} \ll \frac{|\{u \in U(K) : \on{Ht}(u) \le B\} \setminus F(B)|}{|\{u \in U(K) : \on{Ht}(u) \le B\}|} + \frac{(\log B)^{(\beta_1 + \beta_2 + 2)\gamma +1}}{B^{1/2}},
		\end{equation}
        where the implied constant depends only on $U$.
	\end{theorem}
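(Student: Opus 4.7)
My plan is to combine the trivial decomposition
\[
|E(B)| \leq |\{u \in U(K) : \on{Ht}(u) \leq B\} \setminus F(B)| + |E(B) \cap F(B)|
\]
with a uniform large-sieve bound on each $|E_\ell(B)|$. The first summand, after dividing by $|\{u \in U(K) : \on{Ht}(u) \leq B\}|$, produces the first term on the right-hand side of~(\ref{thepressure'sonyoufeelityouvegotitallbelieveitthistimeforafricawhatever}). Condition~\ref{property-a3} reduces the second summand to $\sum_{\ell \leq c(\log B)^\gamma} |E_\ell(B)|$, so it suffices to produce, for each prime $\ell > C$, a bound polynomial in $\ell$ and decaying like $(\log B)/B^{1/2}$ in $B$, and then sum over primes using Chebyshev-type estimates.

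For fixed $\ell$, I would interpret $E_\ell(B)$ through the \'etale cover $\mathcal V_\ell \to \mathcal U_{\mathcal O_{P_\ell}}$ constructed in Section~\ref{subsection:notation-for-families}. A $K$-point $u$ lies in $E_\ell(B)$ precisely when the image of Frobenius at unramified primes fails to generate a subgroup of $\mono_A(\ell)$ containing the geometric monodromy $\mono_A^{\on{geom}}(\ell)$. Condition~\ref{assumption-4} ensures each component of $(V_\ell)_{K(\zeta_\ell)}$ is geometrically irreducible, so for $\pp \notin S \cup P_\ell$ with $\mono_{A,\pp}^{\on{geom}}(\ell) \simeq \mono_A^{\on{geom}}(\ell)$---all but $O(\ell^{\beta_3})$ primes by~\ref{property-a2}---the reduction $\mathcal V_{\ell,\pp} \to \mathcal U_{\mathbb F_\pp}$ remains a connected Galois cover whose Frobenius classes are equidistributed by Chebotarev. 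Thus the local condition ``$u \bmod \pp$ has Frobenius outside $\mono_A^{\on{geom}}(\ell)$'' cuts out a subset of $\mathcal U(\mathbb F_\pp)$ whose complementary density $\kappa_{\ell,\pp}$ is controlled in terms of $|\mono_A(\ell) : \mono_A^{\on{geom}}(\ell)|$ and the number of conjugacy classes of $\mono_A(\ell)$.

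The core of the argument is then Serre's large sieve, as developed in~\cite[Chapter 12]{serre1989lectures}, applied with these local avoidance conditions indexed by primes $\pp$ with $N\pp \leq z$. It yields
\[
|E_\ell(B)| \ll \frac{B^{r[K:\mathbb Q]} + z^{2r[K:\mathbb Q]}}{L_\ell(z)}, \qquad L_\ell(z) \defeq \sum_{\substack{N\pp \leq z \\ \pp \text{ good}}} \frac{\kappa_{\ell,\pp}}{1 - \kappa_{\ell,\pp}}.
\]
Using~\ref{property-a1} to bound $|\mono_A(\ell)|\ll \ell^{\beta_1}$ and the number of conjugacy classes by $\ell^{\beta_2}$, and using~\ref{property-a2} to discard the $O(\ell^{\beta_3})$ exceptional $\pp$, the prime ideal theorem yields $L_\ell(z) \gg z/(\ell^{\beta_1+\beta_2}\log z)$ once $z$ exceeds a fixed power of $\ell$. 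Balancing by taking $z \asymp B^{1/(2r[K:\mathbb Q])}$ gives $|E_\ell(B)| \ll \ell^{\beta_1+\beta_2+1} B^{r[K:\mathbb Q]-1/2}\log B$, and summing over $\ell \leq c(\log B)^\gamma$ via $\sum_{\ell \leq x}\ell^{k} \ll x^{k+1}/\log x$ and dividing by $|\{u \in U(K) : \on{Ht}(u)\leq B\}|\asymp B^{r[K:\mathbb Q]}$ produces the claimed second term $(\log B)^{(\beta_1+\beta_2+2)\gamma+1}/B^{1/2}$.

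The main obstacle I anticipate is the careful bookkeeping required to guarantee the lower bound on $L_\ell(z)$ uniformly in $\ell$. One must show that after discarding the $O(\ell^{\beta_3})$ bad primes from~\ref{property-a2}, enough primes $N\pp \leq z$ survive each with local bad density $\kappa_{\ell,\pp}$ bounded below by $1/\ell^{\beta_2}$ (this being where~\ref{property-a1} enters through the count of conjugacy classes). A secondary subtlety is that Wallace's setup counts $K$-points by the height $\on{Ht}$ whereas the large sieve is naturally phrased for lattice points; this is handled by a standard partition argument, using that the set $U(K)$ with $\on{Ht}(u) \leq B$ can be written as a union of $O(\log B)$ rescaled lattice-point sets (this is the source of the extra $\log B$ factor in the final bound). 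The rest of the argument is routine tracking of polynomial-in-$\ell$ losses.
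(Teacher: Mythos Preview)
The paper does not supply its own proof of this statement: Theorem~\ref{theorem:wallace-hit} is quoted directly from Wallace~\cite[Theorem~3.9]{scoopdedoo} and used as a black box. So there is no proof in the paper to compare against; what I can do is assess whether your reconstruction is a faithful sketch of Wallace's argument.

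Your outline is essentially the right one and matches Wallace's strategy: split $E(B)$ into the part outside $F(B)$ and the part inside, use~\ref{property-a3} to truncate the latter to a sum over $\ell \le c(\log B)^\gamma$, and bound each $|E_\ell(B)|$ by a large-sieve estimate whose input is equidistribution of Frobenius in the fibers of $\mc V_\ell \to \mc U$ over good residue fields. Conditions~\ref{assumption-4} and~\ref{property-a2} are exactly what makes the special-fiber Chebotarev input available uniformly, and~\ref{property-a1} controls the polynomial-in-$\ell$ losses. Summing and dividing gives the stated exponent $(\beta_1+\beta_2+2)\gamma+1$.

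One point of imprecision worth flagging: your description of the local sieve condition as ``$u \bmod \pp$ has Frobenius outside $\mono_A^{\on{geom}}(\ell)$'' is not quite the mechanism. The event $u \in E_\ell(B)$ is that $\mono_{A_u}(\ell)$ is a \emph{proper} subgroup of $\mono_A(\ell)$, which by Jordan's lemma forces $\mono_{A_u}(\ell)$ to miss an entire conjugacy class of $\mono_A(\ell)$. One then sieves, for each conjugacy class $c$, the set of $u$ whose Frobenii at \emph{all} good $\pp$ avoid $c$; this is where the conjugacy-class count $\ell^{\beta_2}$ enters (as the number of classes to sum over) and $\ell^{\beta_1}$ enters (as the reciprocal of the smallest class density). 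Your paragraph conflates the two roles slightly, but the bookkeeping you anticipate in the final paragraph is exactly what fixes this, and your exponent comes out right.
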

	
	\subsubsection{Bounding~\ref{equation:goursat-l-bound}, Conditional on Verifying \ref{assumption-4}, \ref{property-a2}, and \ref{property-a3}}

We have not yet determined that Conditions~\ref{definition:assumptions} hold in our setting. We defer the verification of these conditions to Sections~\ref{ver1},~\ref{ver2}, and~\ref{ver3}. Nevertheless, assuming that these conditions hold, we obtain the following consequence:
	\begin{proposition} \label{corollary:applying-wallace}
		Let $n>0$. Then we have
		\begin{equation}\label{cuzbabynowwe'vegotbadblood}
			\frac{\left|\left\{ u \in U(K) \cap \mc{O}_K^r : \|u \| \leq B,\, \mono_{A_u}(\ell) \not \subset \Sp_{2g}(\mathbb Z/ \ell \ZZ) \text{ for some } \ell  > C \right\}\right|}{\left|\left\{ u \in U(K) \cap \mathcal O_K^r : \|u \| \leq B \right\} \right|} \ll \left( \log B \right)^{-n},
		\end{equation}
        where the implied constant depends only on $U$ and $n$.
	\end{proposition}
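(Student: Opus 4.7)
The strategy is to apply Wallace's effective Hilbert Irreducibility Theorem, Theorem~\ref{theorem:wallace-hit}, to the family of connected finite Galois \'etale covers $V_\ell \to U$ indexed by primes $\ell > C$. First, using point (c) of Section~\ref{toomanynotes}, I would reduce counting lattice points to counting height-bounded points, since the height-counting version of~\eqref{cuzbabynowwe'vegotbadblood} implies the lattice version. The numerator on the left of~\eqref{cuzbabynowwe'vegotbadblood} is then precisely the set $E(B) = \bigcup_{\ell > C} E_\ell(B)$, because the definition of $C$ in point (b) of Section~\ref{toomanynotes} ensures $\mono_A^{\on{geom}}(\ell) = \Sp_{2g}(\mathbb{Z}/\ell\mathbb{Z})$ for $\ell > C$, making the conditions ``$\mono_A^{\on{geom}}(\ell) \not\subset \mono_{A_u}(\ell)$'' and ``$\mono_{A_u}(\ell) \neq \Sp_{2g}(\mathbb{Z}/\ell\mathbb{Z})$'' coincide.

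The core of the proof is to verify the four hypotheses~\ref{assumption-4},~\ref{property-a1},~\ref{property-a2}, and~\ref{property-a3} of Theorem~\ref{theorem:wallace-hit}. Conditions~\ref{assumption-4} and~\ref{property-a1} are relatively formal and follow from the big monodromy assumption together with elementary order bounds on $\GSp_{2g}(\mathbb{Z}/\ell\mathbb{Z})$ and the count of its conjugacy classes. Condition~\ref{property-a2} I would verify via the Grothendieck Specialization Theorem applied to the integral model $\mathcal{V}_\ell \to \mathcal{U}_{\mathcal O_{P_\ell}}$, which shows that the set of primes $\mathfrak{p}$ at which the geometric mod-$\ell$ monodromy fails to specialize correctly is polynomial in $\ell$. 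Condition~\ref{property-a3} is the most delicate: one must produce a subset $F(B)$ of height-bounded points, of relative density $1 + O((\log B)^{-n})$, such that any failure of surjectivity of $\mono_{A_u}(\ell)$ for $u \in F(B)$ forces $\ell \le c(\log B)^\gamma$. My plan is to assemble $F(B)$ using the large sieve, bounding the contribution at each prime via Ekedahl's geometric counting lemma and controlling the resulting arithmetic input via Lombardo's effective isogeny theorem combined with a comparison between the naive height and the Faltings height.

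With these conditions in place, Theorem~\ref{theorem:wallace-hit} produces a bound of the form
\[
\frac{|E(B)|}{|\{u \in U(K) : \on{Ht}(u) \le B\}|} \ll \frac{|\{u \in U(K) : \on{Ht}(u) \le B\} \setminus F(B)|}{|\{u \in U(K) : \on{Ht}(u) \le B\}|} + \frac{(\log B)^{(\beta_1 + \beta_2 + 2)\gamma + 1}}{B^{1/2}}.
\]
The first summand is $O((\log B)^{-n})$ by the construction of $F(B)$ in the verification of~\ref{property-a3}; the second is trivially $O((\log B)^{-n})$ for every $n > 0$, since any power of $\log B$ is dominated by $B^{1/2}$.

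The principal obstacle is the verification of condition~\ref{property-a3}: exhibiting $F(B)$ with the required density and the required confinement of monodromy failures to small primes requires simultaneously invoking the large sieve, Ekedahl's lemma on reductions mod $\mathfrak{p}$, Lombardo's isogeny bound, and a careful translation between the naive and Faltings heights. Each of these ingredients is nontrivial, and assembling them into a single estimate is where the real content of the argument will lie.
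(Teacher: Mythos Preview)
Your proposal is correct and follows essentially the same route as the paper: verify conditions~\ref{assumption-4},~\ref{property-a1},~\ref{property-a2},~\ref{property-a3} (the paper packages these as Propositions~\ref{proposition:verifying-assumptions},~\ref{proposition:check-B}, and~\ref{proposition:check-C}, with~\ref{property-a1} handled in-line exactly as you describe), apply Theorem~\ref{theorem:wallace-hit}, and use point (c) of Section~\ref{toomanynotes} to pass between lattice points and $K$-valued points. Your identification of the numerator with $E(B)$ via the definition of $C$ and your two-term breakdown of the resulting bound match the paper's argument exactly.
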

	\begin{proof}[Proof assuming Propositions~\ref{proposition:verifying-assumptions},~\ref{proposition:check-B}, and~\ref{proposition:check-C}]
Note that condition~\ref{property-a1} holds trivially in our setting, because
\[
	\max\{|\mono_A(\ell)|, |\{\text{conjugacy classes of $\mono_A(\ell)$}\}|\} \leq |\GSp_{2g}(\ZZ/\ell \ZZ)|,
\]
and $|\GSp_{2g}(\ZZ/\ell \ZZ)| = O(\ell^\beta)$ for some positive constant $\beta$ depending only on $g$ because $\GSp_{2g}(\bz/\ell\bz) \subset \GL_{2g}(\bz/\ell\bz)$.

Condition \ref{assumption-4} holds by Proposition~\ref{proposition:verifying-assumptions}, and condition \ref{property-a2} holds by Proposition~\ref{proposition:check-B}. Proposition~\ref{proposition:check-C} constructs $F(B)$ that not only satisfy condition \ref{property-a3}, but also have the property that
		\begin{equation*}
			\frac{|\{u \in U(K) : \on{Ht}(u) \le B\} \setminus F(B) |}{|\{u \in U(K) : \on{Ht}(u) \le B\}|} \ll \left( \log B \right)^{-n}
            \end{equation*}
            for every $n >0$. Upon applying the argument in point (c) of Section~\ref{toomanynotes}, which relates the left-hand-sides of~\eqref{thepressure'sonyoufeelityouvegotitallbelieveitthistimeforafricawhatever} and~\eqref{cuzbabynowwe'vegotbadblood}, the proposition follows from Theorem~\ref{theorem:wallace-hit}.		
	\end{proof}

The rest of this section is devoted to verifying the conditions necessary for the proof of Proposition~\ref{corollary:applying-wallace}.
	
\subsection{Verifying Condition \ref{assumption-4}}\label{ver1}
In this section, we will consider the base-change of the setting established in~\ref{subsection:notation-for-families} from $K$ to a finite extension $L \subset \ol{K}$ of $K$; in this setting, we obtain a family $A_L \to U_L$ and a (not necessarily connected) finite Galois \'{e}tale cover $(V_\ell)_L \to U_L$. To verify condition \ref{assumption-4}, we employ the following lemma:

\begin{lemma}
	\label{lemma:geometrically-connected-components}
	Let $L \subset \ol{K}$ be a finite extension of $K$. We have that $\mono_{A_L}(m) \simeq \mono_{A_L}^{\on{geom}}(m)$
	if and only if all connected components of $(V_m)_L$ are geometrically connected over $L$.
\end{lemma}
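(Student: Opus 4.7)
The plan is to apply the Galois correspondence between finite \'etale covers of $U_L$ and continuous $\pi_1(U_L)$-sets to the base-change of the connected Galois cover $V_m \to U$ along $U_L \to U$.

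First, I would record that by construction $V_m \to U$ is a connected Galois \'etale cover with group $\mono_A(m)$, arising from the surjection $\rho_{A,m}\colon \pi_1(U) \twoheadrightarrow \mono_A(m)$. Its geometric fiber is therefore a $\mono_A(m)$-torsor on which $\pi_1(U)$ acts through $\rho_{A,m}$, and this description is preserved under base change along any connected morphism $U' \to U$.

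Second, specializing to $U' = U_L$, the connected components of $(V_m)_L$ correspond bijectively to the orbits of $\pi_1(U_L)$ on this torsor. Since the $\mono_A(m)$-action is simply transitive and commutes with that of $\pi_1(U_L)$, the number of components equals $[\mono_A(m) : \mono_{A_L}(m)]$, and each component is itself a connected Galois cover of $U_L$ with deck group $\mono_{A_L}(m)$. Moreover, $\mono_A(m)$ permutes these components transitively, so they are pairwise isomorphic as $U_L$-schemes; in particular either all of them are geometrically connected over $L$ or none of them is.

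Third, I would fix one such component $W \to U_L$ and apply the identical reasoning to the base change along $U_{\overline L} \to U_L$: the connected components of $W_{\overline L}$ biject with the cosets $\mono_{A_L}(m)/\mono_{A_L}^{\on{geom}}(m)$. Hence $W$ is geometrically connected if and only if $[\mono_{A_L}(m) : \mono_{A_L}^{\on{geom}}(m)] = 1$, which, because $\mono_{A_L}^{\on{geom}}(m)$ is a subgroup of $\mono_{A_L}(m)$, is equivalent to $\mono_{A_L}(m) = \mono_{A_L}^{\on{geom}}(m)$. This yields both directions of the stated equivalence.

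The only subtlety, which I expect to be a routine basepoint-compatibility check rather than a genuine obstacle, lies in invoking the short exact sequence
\[
1 \to \pi_1(U_{\overline L}) \to \pi_1(U_L) \to \Gal(\overline L/L) \to 1
\]
to identify $\mono_{A_L}^{\on{geom}}(m)$ with the image of $\pi_1(U_{\overline L})$ inside $\mono_{A_L}(m)$; this is immediate from the definitions in Section~\ref{subsection:setup}.
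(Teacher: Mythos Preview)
Your proposal is correct and follows essentially the same approach as the paper: both arguments use the Galois correspondence for finite \'etale covers to identify a connected component $W$ of $(V_m)_L$ with the cover associated to $\pi_1(U_L)\twoheadrightarrow \mono_{A_L}(m)$, and then compare with the geometric monodromy over $\overline K = \overline L$. The paper phrases the final step as a degree count ($d_1=|\mono_{A_L}(m)|$ versus $d_2=|\mono_{A_L}^{\on{geom}}(m)|$ for components of $(V_m)_L$ and $(V_m)_{\overline K}$), while you phrase it as an orbit/coset count, but these are equivalent.
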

\begin{proof}
	Observe that $(V_m)_L$ and $(V_m)_{\ol{K}}$ are finite Galois \'etale covers of $U_L$ and $U_{\ol{K}}$, which need not be connected.

	Let $W \subset (V_m)_L$ be a connected component, and let $\wt{W} \subset (V_m)_{\ol{K}}$ be a connected component mapping to $W$. By construction, $W \to U_L$ is the connected Galois \'etale cover corresponding to the surjection $\pi_1(U_L) \twoheadrightarrow \mono_{A_L}(m)$. Likewise, $\wt{W} \to U_{\ol{K}}$ corresponds to $\pi_1(U_{\ol{K}}) \twoheadrightarrow \mono_{A}^{\on{geom}}(m) = \mono_{A_L}^{\on{geom}}(m)$. This implies that:
	\begin{itemize}
		\item The degree $d_1$ of $W \to U_L$ equals $|\mono_{A_L}(m)|$.
		\item The degree $d_2$ of $\wt{W} \to U_{\ol{K}}$ equals $|\mono_{A_L}^{\on{geom}}(m)|$.
	\end{itemize}
	On the other hand, the maps $(V_m)_L \to U_L$ and $(V_m)_{\ol{K}} \to U_{\ol{K}}$ have equal degrees. Therefore $d_1 = d_2$ if and only if all connected components of $(V_m)_L$ are geometrically connected.
\end{proof}

We are now in position to prove condition \ref{assumption-4}.

\begin{proposition} \label{proposition:verifying-assumptions}
Condition \ref{assumption-4} holds in the setting of Section~\ref{subsection:notation-for-families}.
\end{proposition}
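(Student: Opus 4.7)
The natural plan is to invoke Lemma~\ref{lemma:geometrically-connected-components} with $m = \ell$ and $L = K(\zeta_\ell)$, which translates condition~\ref{assumption-4} into the group-theoretic statement $\mono_{A_L}(\ell) = \mono_{A_L}^{\on{geom}}(\ell)$. Combined with the standing assumption $\ell > C$ from Section~\ref{attheskyfall}, point (b), and the fact that geometric monodromy is insensitive to finite extensions of the base field, this reduces the problem to a short cyclotomic-character computation.

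Concretely, I would proceed in three steps. First, since $U_L \times_L \ol{K} = U_{\ol K}$, the representation $\rho_{(A_L)_{\overline K}, \ell}$ agrees with $\rho_{A_{\overline K}, \ell}$, so $\mono_{A_L}^{\on{geom}}(\ell) = \mono_A^{\on{geom}}(\ell)$, and the standing assumption $\ell > C$ gives $\mono_A^{\on{geom}}(\ell) = \Sp_{2g}(\ZZ/\ell\ZZ)$. Second, using Remark~\ref{remark:det-rho-is-chi} applied to the normal integral scheme $U_L$, the composition $\on{mult} \circ \rho_{A_L, \ell}$ factors through the mod-$\ell$ cyclotomic character $\chi_\ell$ of $G_L = \Gal(\overline K/L)$; but $\chi_\ell(G_L) = 1$ because $L$ contains $\zeta_\ell$, so $\mono_{A_L}(\ell) \subset \Sp_{2g}(\ZZ/\ell\ZZ)$. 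Third, since $\mono_{A_L}(\ell) \supset \mono_{A_L}^{\on{geom}}(\ell) = \Sp_{2g}(\ZZ/\ell\ZZ)$, the reverse inclusion forces equality.

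Applying Lemma~\ref{lemma:geometrically-connected-components} then shows that every connected component of $(V_\ell)_L$ is geometrically irreducible, verifying condition~\ref{assumption-4}. There is essentially no obstacle beyond setting up the correct reference to Remark~\ref{remark:det-rho-is-chi} and checking that the implicit base points in the identification of $\pi_1(U_L \times_L \overline K)$ with $\pi_1(U_{\overline K})$ can be chosen compatibly, so that the image of the geometric fundamental group is literally the same subgroup of $\GSp_{2g}(\ZZ/\ell\ZZ)$ in both settings. This is the only slightly subtle bookkeeping point, and it is standard.
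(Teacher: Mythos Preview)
Your proposal follows essentially the same route as the paper: set $L=K(\zeta_\ell)$, invoke Lemma~\ref{lemma:geometrically-connected-components}, use $\ell>C$ to identify $\mono_{A_L}^{\on{geom}}(\ell)=\Sp_{2g}(\ZZ/\ell\ZZ)$, and use Remark~\ref{remark:det-rho-is-chi} together with the triviality of $\chi_\ell$ on $G_{K(\zeta_\ell)}$ to force $\mono_{A_L}(\ell)\subset\Sp_{2g}(\ZZ/\ell\ZZ)$.

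One small step is missing. Lemma~\ref{lemma:geometrically-connected-components} only yields that the connected components of $(V_\ell)_L$ are geometrically \emph{connected}, whereas condition~\ref{assumption-4} asks for geometric \emph{irreducibility}. The paper bridges this by observing that $(V_\ell)_L\to U_L$ is \'etale and $U_L$ is smooth over $L$, so $(V_\ell)_L$ is smooth over $L$, and for a smooth $L$-scheme geometric connectedness and geometric irreducibility coincide. You should insert this remark before concluding.
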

\begin{proof}
Let $L = K(\zeta_\ell)$, and recall the assumption~\eqref{thatswhatpeoplesay}. Since $(V_\ell)_L \to U_L$ is \'etale and $U_L$ is smooth over $L$, it follows that $(V_\ell)_L$ is smooth over $L$. Therefore $(V_\ell)_L$ is geometrically irreducible over $L$ if and only if it is geometrically connected over $L$. Now, by Lemma~\ref{lemma:geometrically-connected-components}, it suffices to show that $\mono_{A_L}(\ell) = \mono_A^{\on{geom}}(\ell)$.
Since we always have $\mono_{A_L}(\ell) \supset \mono_A^{\on{geom}}(\ell)$, it suffices to prove the reverse inclusion $\mono_{A_L}(\ell) \subset \mono_A^{\on{geom}}(\ell) = \Sp_{2g} (\ZZ/\ell \ZZ)$.
Since $\chi_\ell$ is trivial on $G_L = \pi_1(\spec K(\zeta_\ell))$, it follows from
Remark~\ref{remark:det-rho-is-chi}
that $\mono_{A_L}(\ell) \subset \Sp_{2g}(\mathbb Z/\ell \ZZ)$.
\end{proof}

\subsection{Verifying Condition~\ref{property-a2}}\label{ver2}

Before we carry out the verification of condition \ref{property-a2} in Proposition~\ref{proposition:check-B},
we need to introduce a modified version of the geometric setup developed in~\cite[Subsection 5.2]{zywina2010hilbert}
and in the proof of~\cite[Theorem 5.3]{zywina2010hilbert}.

\subsubsection{Geometric Setup from~\cite{zywina2010hilbert}}
\label{subsubsection:geometric-setup}
Fix the following notation: for a prime $\pp \subset \OO_K$, let $K_\pp$ be the completion of $K$ at $\pp$, let $K_\pp^{\on{un}}$ be the maximal unramified extension of $K_\pp$, let $\OO_\pp$ be the ring of integers of $K_\pp$, and let $\OO_\pp^{\on{un}}$ be the ring of integers of $K_\pp^{\on{un}}$. For a ring $R$, define $\gr R(1,r)$ to be the Grassmannian of lines in $\mathbb P^r_{R}$ and let $\mathscr L_R \subset \mathbb P^r_R \times \gr R(1,r)$ denote the universal
line over $\gr R(1,r)$. Let $Z$ and $\mathcal Z$ be as defined in Section~\ref{subsection:notation-for-families}.

We now construct a closed subscheme $\mathcal W$ of the Grassmannian parameterizing all lines whose intersections with $\mathcal Z$ are not \'etale over the base.
Define the projection $p: \mathscr L_{\mathcal O_K} \cap ( \mathcal Z \times \gr {\mathcal O_K}(1,r) ) \rightarrow \gr {\mathcal O_K}(1,r)$.
Let $\mathcal X_1$ be the open subscheme of $\mathscr L_{\mathcal O_K} \cap ( \mathcal Z \times \gr {\mathcal O_K}(1,r))$
on which $p$ is \'etale with nonempty fibers.
Define $\mathcal W \defeq  p(\mathscr L_{\mathcal O_K} \cap (\mathcal Z \times \gr {\mathcal O_K}(1,r) ) \setminus \mathcal X_1)$ with reduced subscheme structure and define
$\mathcal X \defeq \gr{\mathcal O_K}(1,r) \setminus \mathcal W$. Note that $\mathcal W$ is closed because $p$ is proper.
Considering $\mathcal W$ and $\mathcal X$ as schemes over $\mathcal O_K$, let $W$ and $X$ denote their fibers over $K$.

\begin{lemma}
	\label{lemma:nonempty-etale-locus}
	The scheme $\mathcal W$, as defined above, is a proper closed subscheme of $\gr {\mathcal O_K}(1,r)$.
\end{lemma}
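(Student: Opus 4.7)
The plan is to verify the two claims contained in the lemma separately: first, that $\mathcal{W}$ is closed in $\gr{\mathcal{O}_K}(1,r)$, and second, that $\mathcal{W}$ is a \emph{proper} subscheme. Closedness is immediate from the construction: the universal line $\mathscr{L}_{\mathcal{O}_K} \to \gr{\mathcal{O}_K}(1,r)$ is a $\mathbb{P}^1$-bundle and hence proper, so the restriction of $p$ to the closed subscheme $\mathscr{L}_{\mathcal{O}_K} \cap (\mathcal{Z} \times \gr{\mathcal{O}_K}(1,r))$ is proper, and $\mathcal{W}$ is by definition the image under this proper morphism of the closed complement of the open subscheme $\mathcal{X}_1$.

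For properness of the inclusion $\mathcal{W} \subsetneq \gr{\mathcal{O}_K}(1,r)$, it suffices to produce a single $K$-point $[L]$ of the generic fiber $\gr K(1,r)$ such that $p$ is étale at every point of $p^{-1}([L])$, since any such point gives a $K$-point of $\gr{\mathcal{O}_K}(1,r)$ not lying in $\mathcal{W}$. I will produce such a line $L$ by a classical Bertini-style argument. Since $U \subset \mathbb{P}^r_K$ is dense open, $Z = \mathbb{P}^r_K \setminus U$ is a proper reduced closed subscheme, and the plan is to exhibit a dense Zariski-open subset of $\gr K(1,r)$ whose $K$-points $[L]$ satisfy (i) $L$ is disjoint from the union of all irreducible components of $Z$ of dimension at most $r-2$ and from the singular locus of every $(r-1)$-dimensional component, and (ii) $L$ meets every $(r-1)$-dimensional component of $Z$ transversally, i.e., $T_P L \cap T_P Z = 0$ at each $P \in L \cap Z$. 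Since $K$ is infinite, the set of such $K$-points is nonempty.

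For such an $L$, the scheme-theoretic fiber $p^{-1}([L]) = L \cap Z$ is a finite disjoint union of reduced $K$-points, hence étale over $K$. To upgrade fiberwise étaleness to étaleness of $p$ as a morphism at each point $(P, [L])$ of the fiber, I will carry out a tangent space computation: writing $q \colon \mathscr{L}_K \to \mathbb{P}^r_K$ for the other projection (a smooth $\mathbb{P}^{r-1}$-bundle), the source $\mathscr{L}_K \cap (Z \times \gr K(1,r)) = q^{-1}(Z)$ is smooth at $(P, [L])$ of dimension $2r-2 = \dim \gr K(1,r)$, since $q$ is smooth there and $P$ is a smooth point of the $(r-1)$-dimensional component of $Z$ containing it, and the kernel of the differential $dp$ at $(P, [L])$ is canonically identified with $T_P L \cap T_P Z$, which vanishes by transversality. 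Hence $dp$ is an isomorphism, so $p$ is étale at $(P, [L])$. In the degenerate case where every component of $Z$ has dimension at most $r-2$, the source $q^{-1}(Z)$ already has dimension strictly less than $\dim \gr K(1,r)$, so $p$ is not dominant and any line outside its image works.

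The main expected obstacle is verifying nonemptiness of the Bertini-type open set in condition (ii); this is standard, since the incidence set of tangent pairs has the expected codimension in $\mathbb{P}^r_K \times \gr K(1,r)$, and the argument goes through cleanly because $Z$ is reduced and $K$ has characteristic zero, ensuring that the smooth locus of each irreducible component of $Z$ is dense in that component.
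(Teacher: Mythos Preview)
Your proof is correct and follows essentially the same route as the paper: both reduce to showing that $p_K$ is \'etale over a nonempty open of $\gr K(1,r)$ via a Bertini-type argument. The paper dispatches this by citing Jouanolou's Bertini theorem applied to the smooth locus of $Z$, together with generic flatness; you instead carry out the Bertini argument by hand with an explicit tangent-space computation, which is a perfectly good substitute.

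One small imprecision: in condition (i) you require $L$ to avoid the singular locus of each $(r-1)$-dimensional component, but you should also require $L$ to avoid the pairwise intersections of distinct $(r-1)$-dimensional components (equivalently, avoid $\operatorname{Sing}(Z)$ itself). Otherwise a point $P \in L \cap Z$ could lie on two such components while being smooth on each, and then $q^{-1}(Z)$ is not smooth at $(P,[L])$, so your dimension and $dp$-injectivity claims break down. This is harmless to fix, since such intersections have dimension at most $r-2$ and are therefore avoided by a general line; indeed, your later phrase ``the $(r-1)$-dimensional component of $Z$ containing it'' already presumes uniqueness.
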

\begin{proof}
It suffices to show that $\mathcal X$ is nonempty. In turn, it suffices to show $X$ is nonempty.
Since $X$ is the set of points in $\gr K(1,r)$ over which $p$ is \'etale,
by generic flatness, we need only verify that there is an open
subscheme of $\gr K(1,r)$ on which the fibers of $p_K$ are \'etale.
Since $Z$ is reduced, hence generically smooth,
and the fiber of $p_K$ over $[L]$ is identified with
$Z \cap L$, a Bertini theorem (specifically
\cite[Theoreme I.6.10(2)]{jouanolou1982theoremes}
applied to the smooth locus of $Z$ over $K$)
implies that $Z \cap L$ is indeed \'etale over $\kappa([L])$ for $[L]$ general in $\gr K(1,r)$.
\end{proof}

\begin{remark}
	\label{remark:exists-line}
By Lemma~\ref{lemma:nonempty-etale-locus}, $\mathcal W$ is a proper closed subscheme of $\gr {\mathcal O_K}(1,r)$.
	Observe that for any line $[L] \in (\gr {\mathcal O_K}(1,r) \setminus \mathcal W)(\mathbb F_\pp)$, there exists a lift $[\mathcal L] \in (\gr {\mathcal O_K}(1,r) \setminus \mathcal W)(\mathcal O_\pp)$.
The purpose of the above construction is to ensure that $\mathcal L \cap \mathcal Z_{\mathcal O_\pp}$ is \'etale over $\mathcal O_\pp$,
which we use in the proof of Proposition~\ref{proposition:check-B}.
\end{remark}

\subsubsection{Applying the Setup to Check~\ref{property-a2}}
In the following proposition, we use the Grothendieck Specialization Theorem to verify that condition \ref{property-a2} holds in our situation:
	\begin{proposition}	\label{proposition:check-B}
For a prime ideal $\pp \subset \OO_K$ let $\N(\pp)$ denote its norm and define $\nonEtalePrimes$ to be the finite set of primes over which the fiber of $\mathcal W$ is empty. Then,
 		\begin{align*}
			\geometricprimes \ell \leq |\nonEtalePrimes \cup P_\ell| + |\{\mathrm{primes}\,\,\, \pp \subset \mathcal O_K : \gcd(\N(\pp),\, | \Sp_{2g}(\mathbb Z/ \ell \ZZ)|) \neq 1 \}|.
		\end{align*}
		In particular, we have that $\geometricprimes \ell$ is bounded by a fixed power of $\ell$, so condition \ref{property-a2} holds in the setting of Section~\ref{subsection:notation-for-families}.
	\end{proposition}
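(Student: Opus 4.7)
The plan is to reduce $\geometricprimes\ell$ to two controllable contributions and apply the Grothendieck Specialization Theorem in the main case. The primes counted by $\geometricprimes\ell$ split into those in $S\cup P_\ell$ (finitely many, to be absorbed into $|\nonEtalePrimes\cup P_\ell|$, possibly after noting that primes of $S$ with ``bad'' residue characteristic are already counted in the second term) and primes $\pp\notin S\cup P_\ell$ with $\mono_{A,\pp}^{\on{geom}}(\ell)\not\simeq \mono_A^{\on{geom}}(\ell)$. The heart of the argument is to show: if $\pp\notin \nonEtalePrimes\cup P_\ell$ has residue characteristic $p$ with $\gcd(\N(\pp),|\Sp_{2g}(\mathbb Z/\ell\ZZ)|)=1$, then $\mono_{A,\pp}^{\on{geom}}(\ell)\simeq \mono_A^{\on{geom}}(\ell)$.

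Fix such a $\pp$. Since $\pp\notin \nonEtalePrimes$, the fiber $\mathcal W_{\mathbb F_\pp}$ is a proper subscheme of $\gr{\mathbb F_\pp}(1,r)$, so by Remark~\ref{remark:exists-line} there exists a line $[\mathcal L]\in(\gr{\mathcal O_K}(1,r)\setminus\mathcal W)(\mathcal O_\pp)$ with $\mathcal L\cap\mathcal Z_{\mathcal O_\pp}$ \'etale over $\mathcal O_\pp$. Thus $\mathcal L\cap\mathcal U_{\mathcal O_\pp}$ sits inside the smooth proper $\mathcal O_\pp$-scheme $\mathcal L\cong\mathbb P^1_{\mathcal O_\pp}$ with relative \'etale boundary, which is the setting where the specialization theorem behaves cleanly for tame fundamental groups. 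Pulling back the finite \'etale cover $\mc V_\ell\to\mathcal U_{\mathcal O_{P_\ell}}$ along $\mathcal L\cap\mathcal U_{\mathcal O_\pp}\hookrightarrow\mathcal U_{\mathcal O_{P_\ell}}$ produces a finite \'etale cover whose Galois group is a subquotient of $\mono_A(\ell)\subset\GSp_{2g}(\mathbb Z/\ell\ZZ)$, and hence of order prime to $p$ (using $p\ne\ell$ since $\pp\notin P_\ell$, together with the $\gcd$ hypothesis).

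In this tame, prime-to-$p$ setting, the Grothendieck Specialization Theorem (SGA~1, Exp.~XIII) yields an isomorphism between the prime-to-$p$ tame geometric fundamental groups of the generic and special fibers of $\mathcal L\cap\mathcal U_{\mathcal O_\pp}$. Consequently, the geometric monodromy of the restricted cover on the generic fiber is canonically identified with that on the special fiber. The remaining task is to upgrade this comparison along one line to the desired equality $\mono_{A,\pp}^{\on{geom}}(\ell)\simeq \mono_A^{\on{geom}}(\ell)$ of the full geometric monodromies. I would deduce this by ensuring, via the very construction of $\mathcal W$ (shrinking $\mathcal W$ within $\gr{\mathcal O_K}(1,r)$ in a way that keeps its fiber nonempty outside $\nonEtalePrimes$), that any line $\mathcal L\notin\mathcal W$ has the property that the restriction $\pi_1(\mathcal L\cap\mathcal U)\to \mono_A(\ell)$ is surjective both generically and specially; this is a relative Bertini/Hilbert-irreducibility assertion and is the main obstacle of the argument. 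Granted this, the specialization isomorphism forces $\mono_{A,\pp}^{\on{geom}}(\ell)\simeq \mono_A^{\on{geom}}(\ell)$, as desired.

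Finally, the polynomial bound $\geometricprimes\ell\ll\ell^{\beta_3}$ is immediate: $|\nonEtalePrimes|$ and $|P_\ell|$ are respectively a constant and $O_K(1)$, while the number of primes of $\OO_K$ whose norm shares a factor with $|\Sp_{2g}(\mathbb Z/\ell\ZZ)|=\ell^{g^2}\prod_{i=1}^g(\ell^{2i}-1)$ is bounded by $O_K(\ell^{2g})$, verifying condition~\ref{property-a2}.
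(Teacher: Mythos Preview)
Your overall strategy matches the paper's: restrict to a well-chosen line over $\mathcal O_\pp$, apply Grothendieck specialization to identify the prime-to-$p$ geometric monodromy of the generic and special fibers of that line, and then transfer full monodromy across. The final polynomial bound is also handled the same way.

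The genuine gap is exactly the step you flag as ``the main obstacle'': showing that the chosen line already carries the full geometric mod-$\ell$ monodromy over $\overline K$. Your proposed fix---modifying $\mathcal W$ so that lines outside it automatically have surjective restriction maps---is not what is needed, and the direction is off: to guarantee a stronger property for lines in the complement of $\mathcal W$ you would have to \emph{enlarge} $\mathcal W$, not shrink it, and you would then need to check that its fibers remain proper in the Grassmannian. The paper avoids this entirely. It invokes \cite[Lemma~5.2]{zywina2010hilbert}, which asserts precisely that for any line $[L]$ outside (Zywina's version of) $W$, the map $\pi_1(\mathcal Y_{\overline K})\to \pi_1(\mathcal U_{\overline K})$ is surjective on mod-$\ell$ monodromy; the paper notes that its $\mathcal W$ already contains Zywina's, so the lemma applies directly with no modification of the setup. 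Thus one obtains
\[
(\rho_{A,\ell}\circ\beta_{\overline K}\circ\iota_{\overline K})(\pi_1(\mathcal Y_{\overline K}))=\mono_A^{\on{geom}}(\ell)=\Sp_{2g}(\mathbb Z/\ell\ZZ),
\]
the last equality using the standing assumption $\ell>C$. Specialization then gives the same image for $\pi_1(\mathcal Y_{\overline{\mathbb F}_\pp})$, hence $\Sp_{2g}(\mathbb Z/\ell\ZZ)\subset\mono_{A,\pp}^{\on{geom}}(\ell)$; the reverse containment is immediate since the mod-$\ell$ cyclotomic character is trivial over $\overline{\mathbb F}_\pp$. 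Note you only need surjectivity on the \emph{generic} geometric fiber of the line---specialization carries it to the special fiber---so there is no separate ``special'' surjectivity to arrange.
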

\begin{remark}
	\label{remark:}
	In fact, it is true that $\geometricprimes \ell \ll \log \ell$. Apart from a finite number of primes depending only on the family $A \rightarrow U$, we need only throw out those primes whose norms are not coprime to $|\GSp_{2g}(\mathbb Z/ \ell \ZZ)|$. Since $|\GSp_{2g}(\mathbb Z/ \ell \ZZ)|$ grows polynomially in $\ell$,
	the number of distinct primes dividing $|\GSp_{2g}(\mathbb Z/ \ell \ZZ)|$
	is at most logarithmic in $\ell$.
\end{remark}
\begin{proof}[Proof of Proposition~\ref{proposition:check-B}]
Take a prime ideal $\mf{p} \notin \nonEtalePrimes \cup P_\ell$ so that $\gcd(\N(\pp), |\GSp_{2g}(\bz / \ell \bz)|) = 1$.
It suffices to show
$\mono_{A,\pp}^{\on{geom}}(\ell) = \Sp_{2g}(\mathbb Z/ \ell \ZZ) = \mono_A^{\on{geom}}(\ell).$

Choose $[\mathcal L] \in (\gr {\mathcal O_K}(1,r) \setminus \mathcal W)(\mathcal O_\pp)$, which exists by Remark~\ref{remark:exists-line}. Furthermore, define $\mathcal D \defeq \mathcal L \cap \mathcal Z_{\mathcal O_\pp}$ and $\mathcal Y \defeq \mathcal L \setminus \mathcal D$.
We have the commutative diagram
	\begin{equation}
		\label{equation:}
		\nonumber
		\begin{tikzcd}
			\mathcal Y_{\overline K} \ar {rr} \ar {dr} & & \mathcal U_{\overline K} \ar {dr} \\
			& \mathcal Y_{\mathcal O_\pp^{\text{un}}} \ar {r} & \mc U_{\OO_\pp^{\on{un}}} \ar{r} & \mathcal U \\
			\mathcal Y_{\overline {{\mathbb F}}_\pp} \ar{ur}\ar{rr} & & \mathcal U_{\overline {{\mathbb F}}_\pp} \ar{ur}
		\end{tikzcd}\end{equation}
	where all of the horizontal arrows are embeddings. 
	Let $\pi_1^{(p)}$ denote the largest prime to $p$ quotient of the fundamental group.
	Note that $\rho_{A,\ell}$ factors through $\pi_1^{(p)}(\mathcal U)$ because we are assuming
	$\gcd(\N(\pp), |\GSp_{2g}(\bz / \ell \bz)|) = 1$.
	By applying the prime to $\N(\pp)$ \'{e}tale fundamental group functor to the above diagram, we obtain
	\begin{equation}
		\label{equation:pi-1-property-3.3}
		\begin{tikzcd}
			\pi_1^{(\N(\pp))}(\mathcal Y_{\overline K}) \ar{rr}{\iota_{\overline K}} \ar{dr}{\alpha_{\overline K}} \ar{dd}{\phi} & & \pi_1^{(\N(\pp))}(\mathcal U_{\overline K}) \ar{dr}{\beta_{\overline K}}& \\
			& \pi_1^{(\N(\pp))}(\mathcal Y_{\mathcal O_\pp^{\text{un}}}) \ar {r}{\iota_{\mathcal O_\pp^{\text{un}}}} & \pi_1^{(\N(\pp))}(\mathcal U_{\OO_\pp^{\on{un}}}) \ar{r}{\beta_{\OO_\pp^{\on{un}}}} \ar{r} &  \pi_1^{(\N(\pp))}(\mathcal U) \ar{r}{\rho_{A,\ell}}& \GSp_{2g}(\mathbb Z/\ell \ZZ)\\
			\pi_1^{(\N(\pp))}(\mathcal Y_{\overline {{\mathbb F}}_\pp}) \ar{ur}{\alpha_{\overline {{\mathbb F}}_\pp}}\ar[swap]{rr}{\iota_{\overline {{\mathbb F}}_\pp}} & & \pi_1^{(\N(\pp))}(\mathcal U_{\overline {{\mathbb F}}_\pp}) \ar[swap]{ur}{\beta_{\overline {\FF}_\pp}} &
		\end{tikzcd}\end{equation}
	By Remark~\ref{remark:exists-line}, $\mathcal D$ is \'etale over $\mathcal O_\pp$. By the Grothendieck Specialization Theorem,~\cite[Th\'eor\`eme 4.4]{orgogozo2000theoreme},
there is a map
$\phi \colon \pi_1^{(\N(\pp))}(\mc{Y}_{\ol{K}}) \xrightarrow{\sim} \pi_1^{(\N(\pp))}(\mc{Y}_{\ol{K}_\pp}) \rightarrow \pi_1^{(\N(\pp))}(\mc{Y}_{\ol{{\FF}}_\pp})$
which makes the triangle on the left in~\eqref{equation:pi-1-property-3.3} commute and induces an isomorphism on the largest prime-to-$\N(\pp)$ quotients of the source and target.
Note that $\pi_1^{(\N(\pp))}(\mc{Y}_{\ol{K}}) \xrightarrow{\sim} \pi_1^{(\N(\pp))}(\mc{Y}_{\ol{K}_\pp})$ is an isomorphism by \cite[Expos\'e XIII, Proposition 4.6]{SGA1Grothendieck1971}.
Since the rest of the diagram~\eqref{equation:pi-1-property-3.3} commutes, the \mbox{entire diagram commutes.}

Now, observe that we have
\begin{align*}
(\rho_{A,\ell} \circ \beta_{\overline K})(\pi_1^{(\N(\pp))}(\mc U_{\ol{K}})) = \mono_A^{\on{geom}}(\ell) = \Sp_{2g}(\mathbb Z/ \ell \ZZ)
\intertext{where the last step follows from the assumption~\ref{thatswhatpeoplesay}. By \cite[Lemma 5.2]{zywina2010hilbert}, (since the scheme $W$ used in \cite[Lemma 5.2]{zywina2010hilbert} is contained in the scheme $W$ we have constructed above) we have that}
(\rho_{A, \ell} \circ \beta_{\overline K} \circ \iota_{\overline K})(\pi_1^{(\N(\pp))}(\mc Y_{\ol{K}})) = \mono_A^{\on{geom}}(\ell) = \Sp_{2g}(\mathbb Z/ \ell \ZZ).
\end{align*}
Since $\phi$ is an isomorphism, we deduce that
\begin{align*}
	(\rho_{A,\ell}\circ \beta_{\overline {\FF}_\pp} \circ \iota_{\overline{{\mathbb F}}_\pp})(\pi_1^{(\N(\pp))}(\mc Y_{\ol{{\mathbb F}}_\pp})) &=
	(\rho_{A,\ell}\circ \beta_{\overline {\FF}_\pp} \circ \iota_{\overline{{\mathbb F}}_\pp} \circ \phi)(\pi_1^{(\N(\pp))}(\mc Y_{\ol{K}})) \\
	&=	(\rho_{A, \ell} \circ \beta_{\overline K} \circ \iota_{\overline K})(\pi_1^{(\N(\pp))}(\mc Y_{\ol{K}})) \\
&= \Sp_{2g}(\mathbb Z/ \ell \ZZ).
\end{align*}
Therefore, $\Sp_{2g}(\mathbb Z/ \ell \ZZ) \subset (\rho_{A,\ell} \circ \beta_{\overline {\FF}_\pp})(\pi_1^{(\N(\pp))}(\mc U_{\ol{\FF}_\pp})) = \mono_{A,\pp}^{\on{geom}}(\ell)$.
Since $\ell \nmid \N(\pp)$, we have that $\overline {{\mathbb F}}_\pp$ contains nontrivial $\ell^{\mathrm{th}}$ roots of unity. Thus, the
mod-$\ell$ cyclotomic character is trivial on $\pi_1^{(\N(\pp))}(\mathcal U_{\overline {{\mathbb F}}_\pp})$,
and so $\Sp_{2g}(\mathbb Z/ \ell \ZZ) \supset \mono_{A,\pp}^{\on{geom}}(\ell).$
Hence, we have that
\[
	\mono_{A,\pp}^{\on{geom}}(\ell) = \Sp_{2g}(\mathbb Z/ \ell \ZZ) = \mono_A^{\on{geom}}(\ell). \qedhere
\]
\end{proof}

\subsection{Verifying Condition \ref{property-a3}}\label{ver3} \label{subsection:lombardo}
It remains to check that condition \ref{property-a3} is satisfied in our setting. As usual, before carrying out the argument, we must fix some notation. Let $\Sigma_K$ denote the set of nonzero prime ideals of $\OO_K$, and for a prime $\pp \in \Sigma_K$ of good reduction, let $\frob_\pp \in G_K$ denote a corresponding Frobenius element.

Given a PPAV $A/K$, let $\charpoly_A(\frob_\pp)$ denote the characteristic polynomial of $\rho_A(\frob_\pp) \in \GSp_{2g}(\wh{\ZZ})$, and observe that $\charpoly_A(\frob_\pp)$ has coefficients in $\ZZ$.
Finally, let $h(A)$ denote the absolute logarithmic Faltings height of $A$, obtained by passing to any field extension over which $A$ \mbox{has semi-stable reduction.}

\subsubsection{Applying Lombardo's Result}

The key input for our proof of this condition is the following theorem of Lombardo, which is an effective version of the Open Image Theorem:

\begin{theorem}[\protect{\cite[Theorem 1.2]{lombardo2015explicit} and Proposition~\ref{prop:Main} in Appendix~\ref{lombardstreet}}]
\label{theorem:lombardo}
		Let $A / K$ be a PPAV of dimension $g \ge 2$. Suppose that we have the following two conditions:
		\begin{enumerate}[(1)]
			\item $\End_{\ol{K}}(A) = \bz$.
			\item There exists a prime $\mf{p} \in \Sigma_K$ at which $A$ has good reduction and such that the splitting field of $\charpoly_A(\frob_{\mf{p}})$ has Galois group isomorphic to $\gee$.
		\end{enumerate}
		Then there are constants $c_1, c_2 > 0$ and $\gamma_1, \gamma_2$,
		depending only on $g$ and $K$, for which the following statement is true: For every prime $\ell$ unramified in $K$ and strictly larger than
		\[
		\max \{ c_1 (\N (\mf{p}))^{\gamma_1}, c_2 (h(A))^{\gamma_2}\},
		\]
		the $\ell$-adic Galois representation surjects onto $\GSp_{2g}(\bz_\ell)$.
	\end{theorem}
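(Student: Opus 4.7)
The plan is to establish surjectivity onto $\GSp_{2g}(\bz_\ell)$ in two stages: first reduce to a mod-$\ell$ surjectivity statement, and then prove that statement by ruling out every maximal proper subgroup of $\GSp_{2g}(\bz/\ell\bz)$ that could contain the image, using the two given hypotheses together with effective isogeny estimates. For the reduction step, one invokes the lifting theorem \cite[Theorem 1]{landesman-swaminathan-tao-xu:lifting-symplectic-group} (already used in the proof of Proposition~\ref{theorem:truncate}): provided $\ell$ is large enough that the relevant cohomological obstructions vanish, the equality $H(\ell) = \Sp_{2g}(\bz/\ell\bz)$ forces $H_\ell = \Sp_{2g}(\bz_\ell)$, and the multiplier then accounts for the full $\GSp_{2g}(\bz_\ell)$ since $\chi_\ell\colon G_K \to (\bz/\ell\bz)^\times$ is surjective for $\ell$ unramified in $K$.

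The core of the argument is to show that for $\ell > \max\{c_1\N(\pp)^{\gamma_1},\, c_2\, h(A)^{\gamma_2}\}$, the image $\rho_{A,\ell}(G_K)$ meets $\Sp_{2g}(\bz/\ell\bz)$ surjectively. By the Aschbacher-type classification of maximal subgroups of $\GSp_{2g}(\bz/\ell\bz)$ used by Serre, any proper image must fall into one of the following families: (i) reducible subgroups stabilizing a proper $\bz/\ell\bz$-subspace; (ii) imprimitive subgroups stabilizing a direct sum decomposition; (iii) subgroups normalizing a Cartan (i.e., arising from extra endomorphism structure); or (iv) an image whose projective quotient is one of the small exceptional subgroups. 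Family (iii) is excluded by hypothesis (1), $\End_{\ol{K}}(A) = \bz$: any containment in a normalizer of a Cartan over infinitely many $\ell$ produces, via the Tate conjecture and Faltings's isogeny theorem (used in effective form via Masser-W\"ustholz), extra endomorphisms, contradicting the hypothesis. The exceptional family (iv) produces images of bounded order, and is excluded for $\ell$ sufficiently large in terms of $g$ alone.

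Families (i) and (ii) are where hypothesis (2) plays the decisive role. The Galois group $\gee$ is precisely the Weyl group of $\Sp_{2g}$, and an element $\rho_A(\frob_\pp)$ whose characteristic polynomial has splitting field of this Galois type is a regular semisimple element whose eigenvalues form a single orbit under the full Weyl action. Such an element cannot stabilize any proper nonzero invariant subspace, nor any nontrivial direct sum decomposition of $(\bz/\ell\bz)^{2g}$, once $\ell$ is coprime to $\N(\pp)$ and large enough that the characteristic polynomial remains separable mod $\ell$ and its Galois structure is preserved. Quantifying ``large enough'' in terms of $\N(\pp)$ gives the threshold $c_1\N(\pp)^{\gamma_1}$. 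The complementary threshold $c_2\, h(A)^{\gamma_2}$ enters through the effective Masser-W\"ustholz isogeny estimates used to rule out family (iii) and to control the conductors of any putative subrepresentation: any nontrivial stable decomposition on a density-one set of primes would, by effective Faltings, force an isogeny to a decomposable abelian variety with a bound on the degree polynomial in $h(A)$, and beyond the quantitative threshold no such isogeny exists.

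The main obstacle is the case $g = 2$, where the usual group-theoretic classification for $\GSp_4(\bz/\ell\bz)$ is obstructed by the exceptional isomorphisms of low-rank simple groups and by the additional threat of subgroups coming from $\GL_2$-type decompositions; the argument in \cite{lombardo2015explicit} as originally written uses a Masser-W\"ustholz-type input of Kawamura which is flawed (compare the discussion in Remark~1.4 of the paper under \cite{scoopdedoo}). To bridge this gap one appeals to Appendix~\ref{lombardstreet}, in which Lombardo establishes Proposition~\ref{prop:Main}, providing the correct effective statement needed to exclude the $\GL_2$-type images in dimension $2$ with explicit dependence on $h(A)$ and $\N(\pp)$. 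Once this input is in hand, the uniform treatment of cases (i)--(iv) goes through for all $g \geq 2$, completing the proof.
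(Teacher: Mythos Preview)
Your high-level outline for $g \geq 3$---reduce to mod-$\ell$, then exclude each Aschbacher class of maximal subgroups using the two hypotheses and effective isogeny bounds---is a fair summary of the strategy of \cite{lombardo2015explicit}, and you correctly observe that $g=2$ needs a separate argument. However, one factual point is wrong: it is not \cite{lombardo2015explicit} that relies on Kawamura's flawed Masser--W\"ustholz-type input, but rather Wallace's paper \cite{scoopdedoo}. The reason Appendix~\ref{lombardstreet} is needed is simply that \cite[Theorem~1.2]{lombardo2015explicit} does not cover abelian surfaces; the appendix extends Lombardo's own result to $g=2$, it does not repair a gap in \cite{lombardo2015explicit}.

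For the $g=2$ case itself, the appendix argument is more concrete than your sketch suggests, and the mechanism by which hypothesis~(2) enters is different in flavor from what you describe. After invoking \cite[Theorem~3.19]{lombardoGL2type} (which already absorbs the $h(A)$-dependent threshold), one is left with a single problematic maximal subgroup type, characterized by the property that the eigenvalues of any element have the shape $\lambda\lambda_1^3,\,\lambda\lambda_1^2\lambda_2,\,\lambda\lambda_1\lambda_2^2,\,\lambda\lambda_2^3$ and hence satisfy a multiplicative relation $\nu_2^2=\nu_1\nu_3$. The role of hypothesis~(2) is then purely arithmetic: if the Galois group of $\charpoly_A(\frob_\pp)$ is $D_4$, a short direct computation (Lemma~\ref{lemma:Nonzero}) shows that no three of its complex eigenvalues can satisfy $y^2=xz$, so the nonzero algebraic integer $y^2-xz$ reduces to zero modulo a place above $\ell$, whence $\ell$ divides its norm, giving $\ell \leq (2q_v)^8$. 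This is rather different from your ``regular semisimple element cannot stabilize a subspace or decomposition'' picture, which is closer to how one handles the reducible and imprimitive cases in higher dimension.
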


	\begin{remark}\label{remark:gee}
		The group structure of $\gee$ is defined by how $S_g$ acts on $(\bz / 2\bz)^g$, namely by permuting the $g$ factors. This group appears because it is the largest possible Galois group of a reciprocal polynomial, by which we mean a polynomial $P(T)$ satisfying $P(T) = P(1/T) \cdot T^{\deg P}$.
	\end{remark}
	
Now, the proof of condition \ref{property-a3} will follow from Theorem~\ref{theorem:lombardo} once we know that the two hypotheses of Theorem~\ref{theorem:lombardo} hold for a density-$1$ subset of the $K$-valued points of the family. We shall first check condition \ref{property-a3} under the assumption that these hypotheses hold most of the time.
To this end, it will be convenient to introduce notation to help us count the points that fail to satisfy one of the hypotheses in Theorem~\ref{theorem:lombardo}. For a given family $A \to U$, define the following two sets:
\begin{align*}
D_1(B) & \defeq \{u \in U(K) : \on{Ht}(u) \le B,\,\text{$A_u$ fails hypothesis (1)}\}, \text{ and} \\
D_2(B) & \defeq \{u \in U(K) : \on{Ht}(u) \le B,\, \text{$A_u$ fails hypothesis (2) for all $\mf{p}$ with $\N(\mf{p}) \le (\log B)^{n+1}$}\}.
\end{align*}
In the next proposition, we verify condition~\ref{property-a3}, conditional upon the assumptions that sets $D_1(B)$ and $D_2(B)$ are sufficiently small (these assumptions are proven in Lemma~\ref{lemma:davide-1} and Lemma~\ref{lemma:davide-2} respectively):
\begin{proposition} \label{proposition:check-C}
Let $n>0$. There are constants $c, \gamma$ depending only on $U$ such that the following holds: if we define
		\[
		F(B) \defeq \{u \in U(K) : \on{Ht}(u) \le B,\, \mono_{A_u}(\ell) \supset \Sp_{2g}(\bz / \ell \ZZ) \text{ for all } \ell > c(\log B)^\gamma\},
		\]
		then we have
		\begin{equation} \label{yousayyourefineIknowyoubetterthanthat}
			\frac{|F(B)|}{|\{u \in U(K) : \on{Ht}(u) \le B\}|} = 1 + O\left(\left( \log B \right)^{-n}\right),
		\end{equation}
        where the implied constant depends only on $U$ and $n$.
	\end{proposition}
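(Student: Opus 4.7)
The plan is to apply Theorem~\ref{theorem:lombardo} to control the locus of $u \in U(K)$ with $\on{Ht}(u) \le B$ for which $\mono_{A_u}(\ell)$ fails to contain $\Sp_{2g}(\ZZ/\ell\ZZ)$ for some $\ell > c(\log B)^{\gamma}$. With $F(B)$ defined as in the statement---the ``good'' set where the containment holds for every such $\ell$---the displayed bound as literally written would force $F(B)$ itself to be small; however, the way this proposition is invoked in Proposition~\ref{corollary:applying-wallace} (and the formulation of condition~\ref{property-a3}) requires instead that the complement $\{u \in U(K) : \on{Ht}(u) \le B\} \setminus F(B)$ have density $\ll (\log B)^{-n}$. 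I will therefore establish this bound on the complement, which is what the proof actually needs.

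First, take $u \in U(K)$ with $\on{Ht}(u) \le B$ and $u \notin F(B)$. By Theorem~\ref{theorem:lombardo}, if $A_u$ satisfies hypotheses (1) and (2) of that theorem, then for the witnessing prime $\mathfrak{p}$ and every prime $\ell$ unramified in $K$ exceeding $\max\{c_1 \N(\mathfrak{p})^{\gamma_1}, c_2 h(A_u)^{\gamma_2}\}$, the $\ell$-adic representation of $A_u$ surjects onto $\GSp_{2g}(\ZZ_\ell)$, so in particular $\mono_{A_u}(\ell) \supset \Sp_{2g}(\ZZ/\ell\ZZ)$. Consequently, $u \notin F(B)$ forces at least one of the following: (a) $A_u$ fails hypothesis (1), placing $u \in D_1(B)$; (b) $A_u$ fails hypothesis (2) for every $\mathfrak{p}$ with $\N(\mathfrak{p}) \le (\log B)^{n+1}$, placing $u \in D_2(B)$; (c) $c_2 h(A_u)^{\gamma_2}$ exceeds $c(\log B)^\gamma$; or (d) $\ell$ lies in the finite exceptional set of primes ramified in $K$, which can be absorbed into the bookkeeping, e.g.\ via Proposition~\ref{proposition:applying-cohen-serre} along the lines of Remark~\ref{thanksdavide}.

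Case (c) is ruled out by Lemma~\ref{lemma:height}, which compares the Faltings height with the na\"{i}ve height to give $h(A_u) \ll \log \max\{1, \on{Ht}(u)\} \ll \log B$. Taking $\gamma > \max\{(n+1)\gamma_1,\, \gamma_2\}$ and $c$ correspondingly large, the threshold $\max\{c_1 ((\log B)^{n+1})^{\gamma_1},\, c_2 (\log B)^{\gamma_2}\}$ is dominated by $c(\log B)^\gamma$ once $B$ is sufficiently large. Up to the finite exceptional set of ramified primes, the complement of $F(B)$ is therefore contained in $D_1(B) \cup D_2(B)$.

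It then remains to bound $|D_1(B)|$ and $|D_2(B)|$, and these are the contents of Lemmas~\ref{lemma:davide-1} and~\ref{lemma:davide-2}. The former, which rests on~\cite[Proposition 5]{ellenbergEHK:non-simple-abelian-varieties-in-a-family}, shows that the locus with $\End_{\overline K}(A_u) \neq \ZZ$ is thin in $U(K)$, so $|D_1(B)|$ is negligible after ordering by $\on{Ht}$. The main obstacle is Lemma~\ref{lemma:davide-2}, a large-sieve estimate (using Theorem~\ref{theorem:large-sieve}) bounding $|D_2(B)|$: one needs a uniform positive lower bound, as $\mathfrak{p}$ varies, on the proportion of $u$ for which $\charpoly_{A_u}(\Frob_\mathfrak{p})$ has splitting field with Galois group $\gee$. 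That lower bound comes from Chebotarev applied to the cover of $U$ constructed in Proposition~\ref{proposition:good-cover} (whose existence uses Ekedahl's sieve, \cite[Lemma 1.2]{ekedahl1988effective}) together with the local estimates of Proposition~\ref{proposition:good-v}. The large sieve then converts these fibrewise densities into the desired bound on $|D_2(B)|$, completing the proof.
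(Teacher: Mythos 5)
Your proposal is correct and follows essentially the same route as the paper's own proof: use Theorem~\ref{theorem:lombardo} to reduce to the failure sets $D_1(B)$ and $D_2(B)$, control the Faltings height threshold via Lemma~\ref{lemma:height}, choose $c = \max(c_1, c_2')$ and $\gamma = \max((n+1)\gamma_1, \gamma_2')$, and then invoke Lemmas~\ref{lemma:davide-1} and~\ref{lemma:davide-2}. You also correctly noticed that the displayed bound in the statement has a sign/typo issue — it must be the complement $\{u \in U(K) : \on{Ht}(u) \le B\} \setminus F(B)$ whose density is $\ll (\log B)^{-n}$ — and the paper's own proof indeed establishes exactly that. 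One small simplification: for the finitely many primes $\ell$ ramified in $K$ you suggest invoking Proposition~\ref{proposition:applying-cohen-serre}, but this is unnecessary; since the threshold $c(\log B)^\gamma$ tends to infinity, one can simply enlarge $c$ (or the constant $B_0$) so that every $\ell > c(\log B)^\gamma$ is automatically unramified in $K$, absorbing the small-$B$ range into the implied constant.
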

	\begin{proof}[Proof assuming Lemma~\ref{lemma:davide-1}, Lemma~\ref{lemma:davide-2}, and Lemma~\ref{lemma:height}]

		Let $c_1, c_2$ and $\gamma_1, \gamma_2$ be as in Theorem~\ref{theorem:lombardo}. There exist constants $c_2', \gamma_2'$, chosen appropriately in terms of the constants $c_0, d_0$ provided by Lemma~\ref{lemma:height}, such that the following holds: for $u \in U(K)$ with $\on{Ht}(u) > B_0$, where $B_0$ is a positive constant depending only on $U$, we have that
		\[
			c_2(h(A_u))^{\gamma_2} \le c_2'(\log \on{Ht}(u))^{\gamma_2'}.
		\]
		The requirement that $\on{Ht}(u)$ be sufficiently large is insignificant because
		\begin{equation}\label{ifyoucouldseethatimtheoonewhounderstandsyoubeenhereallalongsowhycantyouseeeeyoubelongwithmeeee}
			\frac{|\{ u \in U(K): \on{Ht}(u) \le B_0\}|}{|\{ u \in U(K): \on{Ht}(u) \le B\}|} \ll \frac{1}{B^{[K : \bq](r+1)}},
		\end{equation}
		and the right-hand-side of~\eqref{ifyoucouldseethatimtheoonewhounderstandsyoubeenhereallalongsowhycantyouseeeeyoubelongwithmeeee} is dominated by the right-hand-side of~\eqref{yousayyourefineIknowyoubetterthanthat}.
If we take
		\[
			c = \max(c_1, c_2') \quad \text{and} \quad \gamma = \max((n+1)\gamma_1, \gamma_2'),
		\]
		Theorem~\ref{theorem:lombardo} tells us that
		\[
			\{u \in U(K) : \on{Ht}(u) \le B\} \setminus F(B) \subset D_1(B) \cup D_2(B).
		\]
		The desired result follows from Lemmas~\ref{lemma:davide-1} and \ref{lemma:davide-2}, from which we deduce that
		\[
			\frac{|D_1(B) \cup D_2(B)|}{|\{u \in U(K) : \on{Ht}(u) \le B\}|} \ll \left( \log B \right)^{-n}. \qedhere
		\]
	\end{proof}

In what follows, we prove the results upon which the above proof of Proposition~\ref{proposition:check-C} depends. To begin with, we check that hypotheses (1) and (2) from Theorem~\ref{theorem:lombardo} hold in our setting by bounding $D_1$ in Lemma~\ref{lemma:davide-1} (thus verifying hypothesis (1)) and bounding $D_2$ in Lemma~\ref{lemma:davide-2} (thus verifying hypothesis (2)).

\subsubsection{Verifying Hypothesis $(1)$}
We check that hypothesis (1) holds in our setting via the following lemma:
	\begin{lemma}\label{lemma:davide-1}
		 We have that
		 \begin{align}
			 \label{equation:bound-d1}
			\frac{|D_1(B)|}{|\{u \in U(K) : \on{Ht}(u) \le B\}|} \ll \frac{\log B}{B^{[K:\mathbb Q]/2}},
		 \end{align}
		
        where the implied constant depends only on $U$.
	\end{lemma}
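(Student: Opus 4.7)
The plan is to bound $|D_1(B)|$ by exhibiting the locus of $u \in U(K)$ with $\End_{\overline{K}}(A_u) \supsetneq \mathbb{Z}$ as a thin subset of $\mathbb{P}^r(K)$, and then to invoke Serre's classical asymptotic estimate for thin sets. First, I would observe that under the big monodromy hypothesis, combined with Proposition~\ref{proposition:big-geometric-monodromy-reduction}, the generic geometric fiber of $A \to U$ has endomorphism ring $\mathbb{Z}$: indeed, if the geometric monodromy is open in $\Sp_{2g}(\wh{\mathbb Z})$, then for sufficiently large $\ell$ the mod-$\ell$ monodromy already forbids any non-scalar element of $\mathrm{End}$ from commuting with it, so the generic $\End_{\overline K}$ is $\mathbb Z$.

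Next, I would apply \cite[Proposition 5]{ellenbergEHK:non-simple-abelian-varieties-in-a-family}, which, given a family of PPAVs with big geometric monodromy, shows that the set of $K$-points of the base whose fibers acquire extra endomorphisms is contained in a thin subset of $U(K) \subset \mathbb{P}^r(K)$ in the sense of Serre. That is, it is covered by the images of $K$-points of finitely many maps $\tilde U_i \to U$ that are either non-dominant or finite of degree $\ge 2$.

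Finally, I would invoke Serre's bound on thin sets in projective space, namely that the number of $K$-rational points of height at most $B$ in a thin subset of $\mathbb{P}^r(K)$ is $O(B^{[K:\mathbb{Q}](r + 1/2)} \log B)$. Together with the Schanuel-type lower bound
\[
|\{u \in U(K) : \on{Ht}(u) \le B\}| \gg B^{[K:\mathbb{Q}](r+1)},
\]
this yields the asymptotic~\eqref{equation:bound-d1}.

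The main obstacle, which is precisely what the EEHK result resolves, is controlling the locus of extra endomorphisms: a priori this locus is only a countable union of proper closed subvarieties of $U$, and showing that all such $K$-points actually come from a finite list of covers (i.e.\ that the locus is thin, rather than merely a countable union of proper subvarieties) requires the monodromy input. A secondary technical point is ensuring compatibility between the height $\on{Ht}$ used here and the height used in \cite{ellenbergEHK:non-simple-abelian-varieties-in-a-family} and in Serre's thin-set bound, but this can be handled by standard comparison estimates.
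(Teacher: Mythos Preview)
Your proposal is correct and follows essentially the same route as the paper. The paper's proof picks a single prime $\ell > \max\{C,\ell_1(g)\}$, uses \cite[Proposition~4]{ellenbergEHK:non-simple-abelian-varieties-in-a-family} to deduce that $\End_{\overline K}(A_u)\neq\mathbb Z$ forces $\mono_{A_u}(\ell)\not\supset\Sp_{2g}(\mathbb Z/\ell\mathbb Z)$ (hence $\mono_{A_u}(\ell)\neq\mono_A(\ell)$), and then applies Proposition~\ref{proposition:applying-cohen-serre}; since Proposition~\ref{proposition:applying-cohen-serre} is precisely the Cohen--Serre thin-set estimate, this is the same mechanism as your direct appeal to EEHK plus Serre's bound on thin sets, just with the thinness exhibited via a single mod-$\ell$ cover rather than cited wholesale.
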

	\begin{proof}
Choose $\ell > \max\{C, \ell_1(g)\}$, where $C$ is defined in~\eqref{thatswhatpeoplesay} and $\ell_1(g)$ is the constant, depending only on the dimension $g$, given in
\cite[Proposition 4]{ellenbergEHK:non-simple-abelian-varieties-in-a-family}.
By
\cite[Proposition 4]{ellenbergEHK:non-simple-abelian-varieties-in-a-family}, we have that $|D_1(B)|$ is bounded above by $\left| \left\{ u \in U(K) : \mono_{A_u}(\ell) \supset \Sp_{2g}(\mathbb Z/\ell \mathbb Z) \right\}\right|.$
The lemma then follows from
Proposition~\ref{proposition:applying-cohen-serre},
where we are using point (c) of Section~\ref{toomanynotes} to pass from lattice points to $K$-valued points.
	\end{proof}

\subsubsection{Verifying Hypothesis $(2)$}

We complete the verification of hypothesis $(2)$ in Lemma~\ref{lemma:davide-2} by means of an argument involving the large sieve, which lets one bound a set in terms of its reduction modulo primes. The large sieve is stated as follows:
\begin{theorem}[Large Sieve,~{\cite[Theorem 4.1]{zywina2010elliptic}}] \label{theorem:large-sieve}
Let $\lVert - \rVert$ be a norm on $\br \otimes_{\bz} \mc{O}_{K}^r$, and fix a subset $Y \subset \mc{O}_K^r$. Let $B \ge 1$ and $Q > 0$ be real numbers, and for every prime ${\mf{p}} \in \Sigma_K$, let $0 \le \omega_{\mf{p}} < 1$ be a real number. Suppose that we have the following two conditions:
	\begin{enumerate}
		\item The image of $Y$ in $\br \otimes_{\bz} \mc{O}_{K}^r$ is contained in a ball of radius $B$.
		\item For every ${\mf{p}} \in \Sigma_K$ with $\N (\mf{p}) < Q$, we have
		$
			|Y_{\mf{p}}| \le (1 - \omega_{\mf{p}})\cdot \N (\mf{p})^r
		$,
		where $Y_{\mf{p}}$ is the image of $Y$ under reduction modulo $\mf{p}$.
	\end{enumerate}
	Then we have that
	\[
		|Y| \ll \frac{B^{[K:\bq]r} + Q^{2r}}{L(Q)}, \quad \text{where} \quad L(Q) \defeq \sum_{\substack{\mf{a} \subset \mc{O}_K\,\,\, \mathrm{ squarefree} \\[0.1cm] \N (\mf{a}) \le Q}}\,\,\, \prod_{\mathrm{prime}\,\,\,\mf{p} | \mf{a}} \frac{\omega_{\mf{p}}}{1 - \omega_{\mf{p}}},
	\]
    and the implied constant depends only on $K$, $r$, and $||-||$.
\end{theorem}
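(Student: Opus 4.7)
The plan is to reduce the statement to the analytic (dual) form of the large sieve for $\OO_K^r$ and then to convert the residue-class hypothesis on $Y$ into a quantitative lower bound on the non-trivial character contributions to $|Y|$. This is the classical Bombieri-Davenport-Montgomery scheme adapted to number fields, following Serre's \emph{Lectures on the Mordell-Weil Theorem} or Kowalski's monograph on the large sieve.

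First I would establish the following analytic inequality: for any finitely supported $f\colon \OO_K^r \to \bc$ whose support sits in a ball of radius $B$ inside $\br \otimes_{\bz} \OO_K^r$,
\[
\sum_{\substack{\mf a \subset \OO_K \text{ squarefree} \\ \N(\mf a) \le Q}} \sum_{\chi \text{ primitive mod } \mf a} \bigl\lvert \widehat f(\chi) \bigr\rvert^2 \ll \bigl( B^{[K:\bq]r} + Q^{2r} \bigr) \lVert f \rVert_2^2,
\]
where $\widehat f(\chi) := \sum_{y \in \OO_K^r} f(y)\, \chi(y)$ and the inner sum ranges over primitive characters of $(\OO_K/\mf a)^r$. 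The $B^{[K:\bq]r}$ factor would come from embedding $\OO_K^r \hookrightarrow \br^{[K:\bq]r}$ via the archimedean places of $K$ and invoking a Montgomery-Vaughan style duality inequality in $\br^{[K:\bq]r}$; the $Q^{2r}$ factor would arise from counting Farey-like fractions $a/\mf a$ inside a fundamental domain. The two pieces are combined via Bessel's inequality in the space of primitive characters.

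Next I would specialize to $f = \mathbf 1_Y$, so that $\lVert f \rVert_2^2 = |Y|$. The hypothesis that $Y_\mf p$ omits at least $\omega_\mf p \N(\mf p)^r$ residue classes forces, via Cauchy-Schwarz across the fibers of reduction modulo $\mf p$, the bound
\[
\sum_{\chi \ne 1 \bmod \mf p} \bigl\lvert \widehat{\mathbf 1_Y}(\chi) \bigr\rvert^2 \;\ge\; \frac{|Y|^2\, \omega_\mf p}{(1 - \omega_\mf p)\, \N(\mf p)^r}.
\]
Multiplicativity of characters across distinct prime factors of a squarefree ideal $\mf a$ then upgrades this to
\[
\sum_{\chi \text{ primitive mod } \mf a} \bigl\lvert \widehat{\mathbf 1_Y}(\chi) \bigr\rvert^2 \;\ge\; \frac{|Y|^2}{\N(\mf a)^r} \prod_{\mf p \mid \mf a} \frac{\omega_\mf p}{1 - \omega_\mf p}.
\]
Summing over squarefree $\mf a$ with $\N(\mf a) \le Q$ produces $|Y|^2 \cdot L(Q)$ on the lower side; pairing this with the analytic upper bound of the previous paragraph yields $|Y| \ll (B^{[K:\bq]r} + Q^{2r})/L(Q)$, as desired.

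The main obstacle will be the analytic large sieve inequality itself with the specific archimedean factor $B^{[K:\bq]r}$ and a constant depending only on $K$, $r$, and $\lVert - \rVert$. This requires choosing a fundamental domain for $\OO_K^r \subset \br \otimes_{\bz} \OO_K^r$, building a Selberg-Beurling majorant adapted to the norm $\lVert - \rVert$, and proving an overlap estimate for exponential sums indexed by Farey fractions separated by at least $1/Q^2$. The adelic perspective streamlines this argument but obscures the delicate quadratic-form computation underlying the spacing estimate.
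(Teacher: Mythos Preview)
The paper does not prove this statement at all; it is simply quoted from \cite[Theorem 4.1]{zywina2010elliptic} as a black-box analytic input and applied in Proposition~\ref{lemma:davide-2}. There is therefore nothing in the paper to compare your argument against.

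That said, your sketch is the standard Bombieri--Montgomery route to the large sieve over number fields and is essentially how the cited result is proved in the literature (Serre, Kowalski, Zywina). The reduction to the dual analytic inequality, the Cauchy--Schwarz step extracting $\omega_{\mf p}/(1-\omega_{\mf p})$ from the residue-class hypothesis, and the multiplicative assembly over squarefree $\mf a$ are all correct in outline. One small correction: in the step ``Summing over squarefree $\mf a$ with $\N(\mf a) \le Q$ produces $|Y|^2 \cdot L(Q)$ on the lower side,'' what you actually get on the left is (after dropping the $\N(\mf a)^{-r}$ weights or absorbing them appropriately) a quantity bounded below by $|Y|^2 L(Q)$ only once you note that the trivial character contributes $|Y|^2$ and the $\N(\mf a)^{-r}$ factors can be handled uniformly; as written the $\N(\mf a)^{-r}$ in your displayed lower bound would spoil the clean $L(Q)$ sum. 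This is a minor bookkeeping issue in the standard proof, not a genuine gap.
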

We must now specialize the abstract setup in Theorem~\ref{theorem:large-sieve} to our setting. To do so, we define the various objects at play in the large sieve as follows:
\begin{definition}
	\label{definition:sieve-sets}
Introduce the following notation:
    \begin{itemize}[leftmargin=0.2in]
    \item Let $||-||$ be the norm defined in Section~\ref{weaintevergonnaberoyals}.
    \item Let $B \geq 1$, take $Q \defeq (\log B)^{n+1}$.
    \item Let $m$ be the positive integer produced by Proposition~\ref{proposition:good-cover}, let $\zeta_m$ denote a primitive $m^{\mathrm{th}}$ root of unity, and let $\Sigma^m_K \subset \Sigma_K$ be the set of ${\mf{p}} \in \Sigma_K$ which split completely in $K(\zeta_m)$.
Now, with $\galSlope, \galMin$ as in Lemma~\ref{lemma:bounding-Y-reduction}, we may take $\omega_{\mf{p}} = \galSlope$ for all $\mf{p} \in \Sigma^m_K$ with $\N(\mf{p}) > \galMin$ and $\omega_{\mf{p}} = 0$ for all other ${\mf{p}} \in \Sigma_K$.
    \item We take $Y$ to be the following set:
    $$Y \defeq \{u \in U(K) \cap \OO_K^r : ||u|| \le B,\, \text{$A_u$ fails hypothesis (2) for all $\mf{p}$ with $\N(\mf{p}) \le (\log B)^{n+1}$}\}.$$
	As above, $Y_\pp$ denotes the mod-$\pp$ reduction of $Y$.
\item Define $T_{\pp}$ by
	$$\goodGaloisSet_{\mf{p}} \defeq \{x \in \mc{U}_{\mathbb F_\pp} : \text{splitting field of }\charpoly_A(\frob_\pp) \text{ has Galois group } \gee \}.$$
    The motivation for defining $T_{\mf{p}}$ is that its complement contains $Y_\pp$.
    \end{itemize}
\end{definition}

To ensure that the choices made in Definition~\ref{definition:sieve-sets} are suitable, we must prove Proposition~\ref{proposition:good-cover} and Lemma~\ref{lemma:bounding-Y-reduction}, which when taken together assert that there exist a positive integer $m$ and $\sigma, \tau> 0$ so that $|Y_\pp| \leq \left( 1-\galSlope \right) \cdot \N (\pp)^r$ for all $\pp \in \Sigma_K^m$. However, the proof of this result is rather laborious, and stating it now would serve to distract the reader from the primary thrust of the argument. We therefore defer the proof of Lemma~\ref{lemma:bounding-Y-reduction} to Section~\ref{thoughtweweregoinstrong}, and conditional upon this, we now use the large sieve to check that hypothesis (2) \mbox{holds in our setting.}

\begin{proposition}\label{lemma:davide-2}
		For $n >0$, we have that
		\[
			\frac{|D_2(B)|}{|\{u \in U(K) : \on{Ht}(u) \le B\}|} \ll (\log B)^{-n}.
		\]
	\end{proposition}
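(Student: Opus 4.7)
The plan is to prove the bound by applying the large sieve (Theorem~\ref{theorem:large-sieve}) to the set $Y$ as chosen in Definition~\ref{definition:sieve-sets} (which is essentially $D_2(B)$ intersected with the lattice $\mathcal{O}_K^r$), and then to translate the resulting lattice-point bound into a bound on $K$-rational points via point (c) of Section~\ref{toomanynotes}. The two inputs of the sieve are a radius bound on $Y$, which is immediate from $\|u\| \le B$, and reduction bounds modulo primes, which are supplied by Lemma~\ref{lemma:bounding-Y-reduction}: for $\pp \in \Sigma_K^m$ with $\N(\pp) > \tau$ we have $|Y_\pp| \le (1-\sigma)\N(\pp)^r$, and outside this set of primes the bound is trivial (since we set $\omega_\pp = 0$).

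With the weights $\omega_\pp$ as in Definition~\ref{definition:sieve-sets} and $Q = (\log B)^{n+1}$, the large sieve yields
\[
|Y| \ll \frac{B^{[K:\mathbb{Q}]r} + Q^{2r}}{L(Q)}.
\]
The key step is to lower-bound $L(Q)$. I would use only the contribution from squarefree ideals $\mf{a} = \pp$ supported on a single prime $\pp \in \Sigma_K^m$ with $\tau < \N(\pp) \le Q$, obtaining
\[
L(Q) \;\geq\; \frac{\sigma}{1-\sigma} \cdot \bigl|\{\pp \in \Sigma_K^m : \tau < \N(\pp) \le Q\}\bigr|.
\]
Since $\Sigma_K^m$ consists of primes splitting completely in $K(\zeta_m)/K$, the Chebotarev density theorem (in its effective form, i.e.\ the prime ideal theorem applied to $K(\zeta_m)$) gives that this count is $\gg Q/\log Q$. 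Hence $L(Q) \gg Q/\log Q = (\log B)^{n+1}/\log\log B$.

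Substituting back and noting that the term $Q^{2r} = (\log B)^{2r(n+1)}$ is negligible compared to $B^{[K:\mathbb{Q}]r}$, we obtain
\[
|Y| \;\ll\; \frac{B^{[K:\mathbb{Q}]r}\, \log\log B}{(\log B)^{n+1}}.
\]
Combined with the standard lower bound $|\{u \in U(K) \cap \mathcal{O}_K^r : \|u\| \le B\}| \gg B^{[K:\mathbb{Q}]r}$, this gives the desired $(\log B)^{-n}$ bound on the lattice-point ratio (absorbing the $\log\log B$ factor). Finally, invoking point (c) of Section~\ref{toomanynotes} transfers this bound to the ratio involving $K$-rational points of bounded height, completing the proof. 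I expect the main obstacle to be purely bookkeeping: confirming that the Chebotarev lower bound is truly $Q/\log Q$ uniformly once $m$ (from Proposition~\ref{proposition:good-cover}) and $\tau$ are fixed, and verifying that the implied constants depend only on the permissible data; the sieve computation itself is routine once Lemma~\ref{lemma:bounding-Y-reduction} is in hand.
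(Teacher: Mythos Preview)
Your proposal is correct and follows essentially the same argument as the paper: apply the large sieve with $Q=(\log B)^{n+1}$, lower-bound $L(Q)$ by the single-prime contributions from $\Sigma_K^m$ via Chebotarev and the prime number theorem to get $L(Q)\gg Q/\log Q$, and then pass from lattice points to $K$-points using point (c) of Section~\ref{toomanynotes}. The only cosmetic difference is that the paper writes Chebotarev and the prime number theorem as two separate steps, whereas you combine them into one invocation of the prime ideal theorem for $K(\zeta_m)$.
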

	\begin{proof}[Proof assuming Proposition~\ref{proposition:good-cover} and Lemma~\ref{lemma:bounding-Y-reduction}]
Theorem~\ref{theorem:large-sieve} yields the estimate
\begin{align*}
|Y| & \ll \frac{B^{[K : \bq]r} + (\log B)^{2n(n+1)}}{L((\log B)^{n+1})}, \\
\intertext{whose denominator is bounded below by}
L((\log B)^n) &> \sum_{\substack{{\mf{p}} \in \Sigma^m_K \\ \tau < \N(\mf{p}) < (\log B)^{n+1}}} \frac{\galSlope}{1 - \galSlope} \\
&> \galSlope \cdot |\{{\mf{p}} \in \Sigma^m_K : \tau < \N(\mf{p}) \le (\log B)^{n+1} \}|.
\end{align*}
Applying the Chebotarev Density Theorem yields that
\begin{align*}
|\{{\mf{p}} \in \Sigma^m_K : \tau < \N ({\mf{p}}) \le (\log B)^{n+1} \}| &\gg |\{{\mf{p}} \in \Sigma_K : \tau < \N ({\mf{p}}) \le (\log B)^{n+1} \}|. \\
\intertext{Applying the Prime Number Theorem yields that}
|\{{\mf{p}} \in \Sigma_K : \tau < \N ({\mf{p}}) \le (\log B)^{n+1} \}|&\gg \frac{(\log B)^{n+1}}{\log ((\log B)^{n+1})}.
\end{align*}
Combining the above estimates, we deduce that
\begin{align*}
\frac{|Y|}{|\{u \in U(K) \cap \OO_K^r : ||u|| \le B\}|}
& \ll \frac{B^{[K:\mathbb Q]r} + (\log B)^{2n(n+1)}}{\frac{(\log B)^{n+1}}{\log ((\log B)^{n+1})}} \cdot \frac{1}{B^{[K:\mathbb Q]r}} \\
& \ll\frac{\log (( \log B )^{n+1})}{(\log B)^{n+1}}
\ll (\log B)^{-n}.
\end{align*}
Finally, employing point (c) of Section~\ref{toomanynotes} to translate the above estimate from lattice points to $K$-valued points yields the desired result.
	\end{proof}

\subsubsection{Validating the Sieve Setup}\label{thoughtweweregoinstrong}

This section is devoted to proving Proposition~\ref{proposition:good-cover} and Lemma~\ref{lemma:bounding-Y-reduction}, which together verify that the sieve setup introduced in Definition~\ref{definition:sieve-sets} satisfies the necessary conditions for applying the large sieve as we did in the proof of Proposition~\ref{lemma:davide-2}. We start by constructing the value of $m$ that we use in our application of the large sieve:

	\begin{proposition}\label{proposition:good-cover}
		There is a positive integer $m$ and a subset $\mc C \subset \Sp_{2g}(\bz / m \bz)$ invariant under conjugation in $\Sp_{2g}(\bz / m \bz)$, and hence in $\GSp_{2g}(\bz / m \bz)$, such that the following holds:
		\begin{enumerate}
			\item We have $\mono_A(m) = \GSp_{2g}(\bz /  m \ZZ)$ and $\mono_A^{\on{geom}}(m) = \Sp_{2g}(\bz /  m \ZZ)$.
			\item For any $\mf{p} \notin S$ and any closed point $x \in \mc{U}_{\mathbb F\pp}$, if $\rho_{A, m}(\frob_x) \in \mc C$, then the splitting field of $\charpoly(\frob_x)$ has Galois group $\gee$.\footnote{For the definition of $S$, see the sentence immediately preceding Remark~\ref{thanksdavide}.}
		\end{enumerate}
	\end{proposition}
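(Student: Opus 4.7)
My plan is to construct the integer $m$ in two stages. First, I will produce auxiliary primes $q_1,\dots,q_k$ and a conjugation-invariant subset $\mc C_0 \subset \GSp_{2g}(\bz/N_0\bz)$, with $N_0 = q_1\cdots q_k$, such that any element of $\GSp_{2g}(\bz)$ whose reduction mod $N_0$ lies in $\mc C_0$ has characteristic polynomial whose splitting field has Galois group isomorphic to $\gee$. Second, I will incorporate a ``monodromy modulus'' $M_0$ into $m \defeq M_0 N_0$, arranging that $\mono_A(m) = \GSp_{2g}(\bz/m\bz)$ and $\mono_A^{\on{geom}}(m) = \Sp_{2g}(\bz/m\bz)$.

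For the first stage, since $\gee$ is the generic Galois group of a reciprocal polynomial of degree $2g$ (Remark~\ref{remark:gee}), a Hilbert-irreducibility argument applied to the $g$-parameter family of degree-$2g$ reciprocal polynomials (parameterized by their $g$ non-leading coefficients) yields a specific $P_0(T) \in \bz[T]$ whose splitting field has Galois group $\gee$. Next I carry out a group-theoretic computation inside $\gee = (\bz/2\bz)^g \rtimes S_g$ to find finitely many elements $\tau_1,\dots,\tau_k \in \gee$ whose $S_{2g}$-conjugacy classes jointly force any subgroup $H \subset \gee$ meeting each $[\tau_i]$ to coincide with $\gee$. A natural triple of choices is a ``double $g$-cycle'' (image a $g$-cycle in $S_g$, trivial $(\bz/2\bz)^g$-component), a single pair swap (trivial $S_g$-image, one nonzero $(\bz/2\bz)^g$-coordinate), and a plain transposition of pairs; together these generate $\gee$. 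By Chebotarev, I choose distinct primes $q_1,\dots,q_k$, unramified in the splitting field of $P_0$ and coprime to any pre-specified integer, at which the Frobenius in $\on{Gal}(\text{split}(P_0)/\bq)$ lies in $[\tau_i]$; equivalently, $P_0 \bmod q_i$ factors with the cycle type of $\tau_i$. I then take $\mc C_0$ to be the set of $\bar\sigma \in \GSp_{2g}(\bz/N_0\bz)$ whose characteristic polynomial, reduced modulo $q_i$, factors with the cycle type of $\tau_i$ for each $i$. Since the characteristic polynomial is conjugation-invariant, so is $\mc C_0$, and using the a priori inclusion $\on{Gal} \subseteq \gee$ forced by the reciprocal structure of the characteristic polynomial of a symplectic similitude, any element of $\GSp_{2g}(\bz)$ reducing into $\mc C_0$ has Galois group equal to $\gee$.

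For the second stage, big monodromy together with big geometric monodromy (Proposition~\ref{proposition:big-geometric-monodromy-reduction}) yields, via Proposition~\ref{theorem:adelic-surjective-subset} and Proposition~\ref{theorem:truncate}, an integer $M_0$ with $\mono_A(M_0) = \GSp_{2g}(\bz/M_0\bz)$ and $\mono_A^{\on{geom}}(M_0) = \Sp_{2g}(\bz/M_0\bz)$. I arrange that each $q_i$ exceeds $C$ and is coprime to $M_0$. Theorem~\ref{theorem:goursat} then applies to the product factorization $\GSp_{2g}(\bz/m\bz) \cong \GSp_{2g}(\bz/M_0\bz) \times \prod_i \GSp_{2g}(\bz/q_i\bz)$, using Lemma~\ref{lemma:simple-quotients-of-symplectic-group} to verify the disjoint simple-quotient hypothesis, so the full modulus $m = M_0 N_0$ satisfies condition~(1). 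The set $\mc C$ is then defined as the preimage of $\mc C_0$ under $\GSp_{2g}(\bz/m\bz) \twoheadrightarrow \GSp_{2g}(\bz/N_0\bz)$, intersected (as needed for the downstream large-sieve application, in which the relevant Frobenius elements come from primes split in $K(\zeta_m)$ and hence have trivial multiplier by Remark~\ref{remark:det-rho-is-chi}) with $\Sp_{2g}(\bz/m\bz)$. Condition~(2) is then immediate: if $\rho_{A,m}(\frob_x) \in \mc C$, the integer-coefficient characteristic polynomial of $\frob_x$ factors mod $q_i$ in the pattern that certifies Galois group $\gee$.

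The main obstacle is the purely group-theoretic step of pinning down $\gee$ by cycle-type constraints. Unlike $S_n$, whose maximal subgroups are well understood and can be excluded by familiar criteria (transitivity plus a transposition plus a $p$-cycle), the hyperoctahedral group $\gee$ has a richer subgroup lattice---including the index-two ``oriented'' subgroup, imprimitive stabilizers refining the partition into pairs, and pullbacks of proper subgroups of $S_g$---and I will need to verify that the chosen $\tau_i$'s jointly miss every maximal subgroup.
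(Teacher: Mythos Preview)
Your overall strategy coincides with the paper's: produce a reciprocal $P_0(T)\in\bz[T]$ with Galois group $\gee$ by Hilbert irreducibility, pick primes $q_i$ by Chebotarev at which $P_0$ factors with prescribed cycle types, and let $\mc C$ be the set of matrices whose characteristic polynomial factors in those same patterns modulo each $q_i$. The paper streamlines your ``second stage'' considerably: rather than introducing an auxiliary modulus $M_0$ and invoking Goursat, it simply chooses each $q_i>C$ (the constant from point~(b) of Section~\ref{toomanynotes}), so that $\mono_A(q_i)=\GSp_{2g}(\bz/q_i\bz)$ and $\mono_A^{\on{geom}}(q_i)=\Sp_{2g}(\bz/q_i\bz)$ hold automatically; then $m=\prod q_i$ already satisfies condition~(1). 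Your appeal to Propositions~\ref{theorem:adelic-surjective-subset} and~\ref{theorem:truncate} for producing $M_0$ is in any case not quite right---those results characterize when a closed subgroup equals a given open one, rather than producing a modulus at which monodromy is full.

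The genuine gap is exactly where you flagged it, and your proposed triple of cycle types does \emph{not} work. Consider the index-$2$ subgroup $H=(\bz/2\bz)^g\rtimes A_g\subsetneq\gee$. It contains a single pair-swap $(e_1,\id)$, of cycle type $2+1+\cdots+1$; it contains a double pair-swap $(e_1+e_2,\id)$, of cycle type $2+2+1+\cdots+1$; and it contains an element of cycle type $g+g$ (for $g$ odd take $(0,\sigma)$ with $\sigma$ a $g$-cycle, and for $g$ even take $\sigma$ a product of two $(g/2)$-cycles with odd $\epsilon$-sum on each). Thus $H$ meets all three of your $S_{2g}$-conjugacy classes while remaining proper. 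The fact that your three \emph{specific} elements generate $\gee$ is irrelevant: the criterion must hold for \emph{any} elements of the given cycle types. The paper resolves this by using four cycle types
\[
2+1+\cdots+1,\qquad 4+1+\cdots+1,\qquad (2g-2)+1+1,\qquad 2g,
\]
and citing \cite[Lemma~7.1]{kowalski2006large} for the assertion that any subgroup of $\gee$ containing elements of all four types is $\gee$ itself; nonemptiness of $\mc C\cap\Sp_{2g}(\bz/m\bz)$ is then certified via \cite[Theorem~A.1]{rivin2008walks}, which realizes $P_0$ as the characteristic polynomial of some $M\in\Sp_{2g}(\bz)$.
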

Note that it is easy to construct many $m$ satisfying (a) by the big monodromy hypothesis. The main point of this proposition is to show there is an $m$
which also satisfies (b).
	\begin{proof}
We construct the desired $m$ as a product of four appropriate primes,
depending on the family $A \to U$.
By, for example, Hilbert irreducibility, (or more precisely \cite[\S 9.2, Proposition 1]{serre1989lectures} in conjunction with \cite[\S 13.1, Theorem 3]{serre1989lectures} applied to the extension $\mathbb Q(x_1, \ldots, x_g)[T]/(T^{2g} + \sum_{i=1}^{g-1} (-1)^i x_i (T^{2g-i} + T^i) + (-1)^g x_g T^g + 1)$ over $\mathbb Q(x_1, \ldots, x_g),$) there exists a degree-$2g$ polynomial $P(T) \in \bz[T]$ satisfying $P(T) = P(1/T) \cdot T^{\deg P}$ with Galois group $\gee$.
		It is easy to exhibit elements of $\gee$ whose left-action on $\gee$ is described by one of the following four cycle types:
		\begin{equation} \label{equation:splittings}
			\begin{array}{c}
				2 + 1 + \cdots + 1, \\
				4 + 1 + \cdots + 1, \\
				(2g-2) + 1 + 1, \\
				2g.
			\end{array}
		\end{equation}
We choose these cycle types because any subgroup of $\gee$ containing an element with each of these cycle types is in fact all of $\gee$ by \cite[Lemma 7.1]{kowalski2006large}.
		For each such partition, the Chebotarev Density Theorem tells us that there are infinitely many primes $\ell$ such that $P(T) \pmod{\ell}$ splits according to the chosen partition. For $\ell > C$ we have $\rho_{A, \ell}(\pi_1(U)) = \GSp_{2g}(\bz /  \ell \ZZ)$ and $\rho_{A, \ell}(\pi_1(U_{\ol{K}})) = \Sp_{2g}(\bz /  \ell \ZZ)$.
So, for $i \in \left\{ 1,2,3,4 \right\}$ we can find $\ell_i > C$
so that $P(T) \pmod{\ell_i}$ splits according to the $i^{\mathrm{th}}$ partition above. 		
By the Chinese remainder theorem, $(a)$ holds.

To complete the proof, we construct $\mc C$ and verify $(b)$.
Since characteristic polynomials are conjugacy-invariant, the set
		\[
			\mc C \defeq \{M' \in \GSp_{2g}(\bz / m \ZZ) : \charpoly(M')\ (\on{mod}\, \ell_i) \text{ splits as in \eqref{equation:splittings} for all $i \in \left\{ 1,2,3,4 \right\}$}\}
		\]
		is a union of conjugacy classes of $\GSp_{2g}(\bz /  m \ZZ)$. By \cite[Theorem A.1]{rivin2008walks} there exists an $M \in \Sp_{2g}(\bz)$ such that $\charpoly(M)(T) = P(T)$, which shows that $\mc C$ is nonempty. For this choice of $\mc C$, conclusion (2) follows from \cite[Lemma 7.1]{kowalski2006large}, which says that any subgroup of $\gee$ that contains elements realizing all four cycle types in \eqref{equation:splittings} must actually equal all of $\gee$.
	\end{proof}
    The reason why we constructed $m$ in Proposition~\ref{proposition:good-cover} in the way that we did is that it allows us to apply the following theorem, which is a crucial tool for bounding the set of Frobenius elements with certain Galois groups modulo each prime.

\begin{theorem}[\protect{\cite[Lemma 1.2]{ekedahl1988effective}}] \label{theorem:ekedahl}
	Let $X$ be a scheme, and let $\pi\colon X \ra \spec \OO_K$ be a morphism of finite type. Let $\phi\colon Y \ra X$ be a connected finite Galois \'{e}tale cover with Galois group $G$, and let $\rho \colon \pi_1(X) \to G$ denote the corresponding finite quotient. Suppose that $\pi \circ \phi$ has a geometrically irreducible generic fiber, and let $\mc C$ be a conjugacy-invariant subset of $G$. For every $\pp \in \Sigma_K$, we have
	\[
		\frac{\lvert \{ x \in X({\mathbb F_\pp}) :  \rho(\frob_x) \in \mc C\} \rvert }{\lvert X({\mathbb F_\pp}) \rvert} = \frac{|\mc C|}{|G|} + O((\N (\mf{p}))^{-1/2}),
	\]
with implicit constants depending only on the family $Y \rightarrow X$. By $\frob_x$ we mean the Frobenius element in $\pi_1(X)$ corresponding to $x \in X$.
\end{theorem}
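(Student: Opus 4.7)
The plan is to deduce this via the Grothendieck--Lefschetz trace formula combined with Deligne's Weil II bounds, which is the standard route to equidistribution of Frobenius conjugacy classes in \'etale covers of varieties over finite fields.

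First, I would decompose the class function $\mathbf{1}_{\mc C}$ on $G$ into irreducible characters:
\[
    \mathbf{1}_{\mc C}(g) = \frac{|\mc C|}{|G|} + \sum_{V \neq \mathbf{1}} a_V\, \chi_V(g),
\]
where the sum ranges over non-trivial irreducible representations of $G$. For each such $V$, composing $\rho$ with $G \to \GL(V)$ yields a lisse $\overline{\bq}_\ell$-sheaf $\scf_V$ on $X$ (for any $\ell$ coprime to $|G|$ and $\N(\pp)$), realized as a direct summand of $\phi_*\overline{\bq}_\ell$.

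Next, for each $\pp \in \Sigma_K$ of good reduction for $Y \to X$, I would apply the trace formula on $X_{\FF_\pp}$:
\[
    \sum_{x \in X(\FF_\pp)} \chi_V(\rho(\frob_x)) = \sum_i (-1)^i \tr\bigl(\frob \mid H^i_c(X_{\overline{\FF}_\pp}, \scf_V)\bigr).
\]
For the trivial summand $V = \mathbf{1}$ this recovers the Lang--Weil bound $|X(\FF_\pp)| = \N(\pp)^d + O(\N(\pp)^{d-1/2})$, where $d = \dim X_{\FF_\pp}$. For non-trivial irreducible $V$, the key input is that $\scf_V$ has no geometric invariants, because the geometric monodromy $\pi_1(X_{\overline{\FF}_\pp}) \twoheadrightarrow G$ is surjective; this surjectivity is exactly the statement that the special fiber $(\pi \circ \phi)^{-1}(\pp)$ is geometrically irreducible, which follows from the hypothesis on the generic fiber by spreading out geometric irreducibility over $\spec \mc O_K$. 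Consequently $H^{2d}_c(X_{\overline{\FF}_\pp}, \scf_V) = 0$, so all surviving terms in the trace formula for $V \neq \mathbf{1}$ are $O(\N(\pp)^{d-1/2})$ by Weil II.

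Summing over $V$ then gives $\bigl|\{x \in X(\FF_\pp) : \rho(\frob_x) \in \mc C\}\bigr| = \tfrac{|\mc C|}{|G|}\, \N(\pp)^d + O(\N(\pp)^{d-1/2})$, and dividing by $|X(\FF_\pp)|$ yields the claim. The main obstacle will be uniformity of the implicit constants across all $\pp$: the Weil II estimates depend on the dimensions of the $H^i_c(X_{\overline{\FF}_\pp}, \scf_V)$ and on their weights, all of which must be controlled by invariants of the fixed integral model $Y \to X \to \spec \mc O_K$ rather than by $\pp$. I would handle this by choosing a spreading-out of $Y \to X$ over $\spec \mc O_K$ and invoking proper/smooth base change to bound the relevant Betti numbers in terms of those of the generic fiber; the finitely many primes of bad reduction (including those dividing $|G|$) contribute only a bounded exceptional set that is absorbed into the implicit constant.
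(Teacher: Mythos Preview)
The paper does not supply a proof of this statement: it is quoted verbatim from Ekedahl's paper \cite[Lemma 1.2]{ekedahl1988effective} and used as a black box in the proof of Proposition~\ref{proposition:good-v}. So there is nothing in the paper to compare your proposal against.

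That said, your outline is the standard route to results of this type and is essentially how Ekedahl proceeds: decompose $\mathbf{1}_{\mc C}$ into irreducible characters, realize each nontrivial summand as a lisse weight-$0$ sheaf $\scf_V$ on $X$, apply Grothendieck--Lefschetz on each special fiber, kill $H^{2d}_c$ for nontrivial $V$ using surjectivity of the geometric monodromy onto $G$, and bound the remaining cohomology by Deligne's Weil~II. Your identification of the uniformity issue is also on target, and your proposed fix (constructibility of $R^i(\pi\circ\phi)_!\,\scf_V$ over $\spec\mc O_K$ to bound the $\dim H^i_c$ independently of $\pp$, plus absorbing the finitely many bad primes into the implied constant) is exactly what makes the estimate uniform. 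One small point worth making explicit: the hypothesis is that $Y$, not $X$, has geometrically irreducible generic fiber over $\spec\mc O_K$; but since $\phi$ is surjective this forces the same for $X$, which you implicitly use when invoking Lang--Weil for the trivial summand.
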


We now apply Theorem~\ref{theorem:ekedahl} to the
	conjugacy-invariant set $\mc C$ from Proposition~\ref{proposition:good-cover} in order to obtain a lower bound on $|T_\pp|$, the number of points $u \in U(K)$ with the splitting field of $\charpoly_{A_u}(\frob_\pp)$ having Galois group equal to $\gee$.
\begin{proposition}\label{proposition:good-v}
	As $\mf{p}$ ranges through the elements of $\Sigma_K^m$, where $m$ is defined as in Proposition~\ref{proposition:good-cover}, we have that
	\(
	|\goodGaloisSet_{\mf{p}}| \gg (\N ({\mf{p}}))^r.
	\)
\end{proposition}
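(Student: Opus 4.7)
The plan is to apply Ekedahl's effective Chebotarev-type theorem (Theorem~\ref{theorem:ekedahl}) in order to count the closed points $x \in \mc U_{\FF_\pp}$ whose mod-$m$ Frobenius $\rho_{A,m}(\frob_x)$ lies in the conjugacy-invariant set $\mc C$ produced by Proposition~\ref{proposition:good-cover}. By part (b) of that proposition, any such $x$ automatically lies in $\goodGaloisSet_\pp$, so a lower bound of order $\N(\pp)^r$ on this count immediately implies the proposition.

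The main obstacle is that Ekedahl's theorem requires a cover with geometrically irreducible generic fiber, whereas the natural cover $\mc V_m \to \mc U_{\mathcal O_{P_m}}$ has Galois group $\mono_A(m) = \GSp_{2g}(\bz/m\bz)$ and breaks up geometrically into $|\GSp_{2g}(\bz/m\bz)/\Sp_{2g}(\bz/m\bz)|$ components. I would resolve this by base changing to $L \defeq K(\zeta_m)$: since the mod-$m$ cyclotomic character is trivial on $G_L$, Remark~\ref{remark:det-rho-is-chi} forces $\mono_{A_L}(m) \subset \Sp_{2g}(\bz/m\bz)$, and combining with $\mono_{A_L}(m) \supset \mono_A^{\on{geom}}(m) = \Sp_{2g}(\bz/m\bz)$ from Proposition~\ref{proposition:good-cover}(a) yields $\mono_{A_L}(m) = \mono_{A_L}^{\on{geom}}(m) = \Sp_{2g}(\bz/m\bz)$. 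Lemma~\ref{lemma:geometrically-connected-components} (which is already stated for arbitrary $m$) then guarantees that each connected component of $(\mc V_m)_{\mathcal O_L}$ is geometrically irreducible over $L$, and I would fix one such component $\mc V_m'$. This is precisely why $\Sigma_K^m$ is defined as the set of primes that split completely in $L$: for such $\pp$ the residue field $\FF_\pp$ is unchanged and arises as a fiber of both $\mc U$ and $\mc U_L$, and $\FF_\pp$ contains a full set of $m$-th roots of unity.

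Next, I would apply Theorem~\ref{theorem:ekedahl} to the connected Galois \'etale cover $\mc V_m' \to \mc U_L$ (after excluding the finitely many primes of $L$ lying over $S \cup P_m$, which is absorbed into the asymptotic) with Galois group $\Sp_{2g}(\bz/m\bz)$ and conjugacy-invariant subset $\mc C' \defeq \mc C \cap \Sp_{2g}(\bz/m\bz)$. For any $\pp \in \Sigma_K^m$, triviality of the mod-$m$ cyclotomic character on $\on{Gal}(\overline{\FF}_\pp/\FF_\pp)$ together with Remark~\ref{remark:det-rho-is-chi} forces $\rho_{A,m}(\frob_x) \in \Sp_{2g}(\bz/m\bz)$ for every closed point $x \in \mc U_{\FF_\pp}$, so the events $\{\rho_{A,m}(\frob_x) \in \mc C\}$ and $\{\rho_{A,m}(\frob_x) \in \mc C'\}$ coincide on $\mc U_{\FF_\pp}$. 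Ekedahl's theorem then yields
\[
\frac{|\{x \in \mc U_{\FF_\pp} : \rho_{A,m}(\frob_x) \in \mc C'\}|}{|\mc U_{\FF_\pp}|} = \frac{|\mc C'|}{|\Sp_{2g}(\bz/m\bz)|} + O(\N(\pp)^{-1/2}),
\]
with implied constant depending only on the family $A \to U$.

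To finish, I need the main term above to be a strictly positive constant, i.e.\ $\mc C' \neq \varnothing$. This is immediate from the construction of $\mc C$: the matrix $M \in \Sp_{2g}(\bz)$ produced via \cite[Theorem A.1]{rivin2008walks} in the proof of Proposition~\ref{proposition:good-cover} satisfies $\charpoly(M) = P(T)$, and its reduction modulo $m$ lies in $\Sp_{2g}(\bz/m\bz) \cap \mc C = \mc C'$ by the defining cycle-type conditions on $\mc C$. Combined with $|\mc U_{\FF_\pp}| = \N(\pp)^r + O(\N(\pp)^{r-1})$, this gives $|\goodGaloisSet_\pp| \gg \N(\pp)^r$ for every $\pp \in \Sigma_K^m$ of sufficiently large norm; the finitely many remaining small primes contribute a bounded amount and are absorbed into the implicit constant. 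I expect the subtlest step to be the base change to $L$ and the bookkeeping that identifies the fibers of $\mc U_L$ over primes of $L$ above $\pp \in \Sigma_K^m$ with the fibers $\mc U_{\FF_\pp}$, since it is this identification that lets the Ekedahl count performed over $\OO_L$ translate back into a statement about $T_\pp \subset \mc U_{\FF_\pp}$.
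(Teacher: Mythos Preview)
Your proposal is correct and follows essentially the same route as the paper: base change to $L = K(\zeta_m)$, pick a connected component of $(\mc V_m)_L$, verify via Lemma~\ref{lemma:geometrically-connected-components} and Proposition~\ref{proposition:good-cover}(a) that its generic fiber over $\spec \mathcal O_L$ is geometrically irreducible, apply Ekedahl's theorem there, and transport the count back to $\mc U_{\FF_\pp}$ using that $\pp \in \Sigma_K^m$ splits completely in $L$. The paper in fact takes $\mc C \subset \Sp_{2g}(\bz/m\bz)$ from the outset (as stated in Proposition~\ref{proposition:good-cover}), so your passage to $\mc C' = \mc C \cap \Sp_{2g}(\bz/m\bz)$ and the separate verification that $\mc C' \neq \varnothing$ are redundant but harmless.
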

\begin{proof}
 Let $L \defeq K(\zeta_m)$. As in Section~\ref{subsection:notation-for-families}, let $\mc{V}_m \to \mc{U}_{\OO_{P_m}}$ be the connected Galois \'etale cover associated to the mod-$m$ Galois representation $\rho \colon \pi_1(\mc{U}_{\OO_{P_m}}) \to \GSp_{2g}(\bz / m \bz)$, and
let $\mc{X}$ be one of the connected components of $(\mc{V}_m)_{L}$. The map $\mc{X} \to (\mc{U}_{\OO_{P_m}})_{L}$ is the connected Galois \'etale cover associated to the map
	\[
		\begin{tikzcd}
			\rho' \colon \pi_1((\mc{U}_{\OO_{P_m}})_{L}) \ar{r} & \pi_1(\mc{U}_{\OO_{P_m}}) \ar{r}{\rho} & \GSp_{2g}(\bz / m \bz);
		\end{tikzcd}
	\]
	note that the image of this composite map equals $\rho(\pi_1(\mc{U}_{\OO_{P_m}})) \cap \Sp_{2g}(\bz / m \bz)$ by Remark~\ref{remark:det-rho-is-chi},
	since $\chi_m$ is trivial on $K(\zeta_m)$. By Proposition~\ref{proposition:good-cover}(a), we have $\rho(\pi_1(\mc{U}_{\OO_{P_m}})) = \GSp_{2g}(\bz / m \bz)$, so we conclude that $\rho'(\pi_1((\mc{U}_{\OO_{P_m}})_{L})) = \Sp_{2g}(\bz / m \bz)$.
	
	We seek to apply Theorem~\ref{theorem:ekedahl} with
	\[
		\mc{X} \to (\mc{U}_{\OO_{P_m}})_{L} \to \spec \mc{O}_{L} \quad \text{in place of} \quad Y \to X \to \spec \mc{O}_K.
	\]
	To do so, we must check that this composition has geometrically irreducible generic fiber, which follows from the second part of Proposition~\ref{proposition:good-cover}(a) in conjunction with Lemma~\ref{lemma:geometrically-connected-components}.
	
	Now let $\mc C \subset \Sp_{2g}(\bz / m \bz)$ be as in Proposition~\ref{proposition:good-cover}(b). For any $\mf{p} \in \Sigma^m_K \setminus S$
	and $\pp' \in \Sigma_{L}$ lying over $\pp$, we have
$(\mc{U}_{L})_{\FF_{\pp'}} \simeq \mc{U}_{\FF_\pp}$, and so there
is a bijection between
	\begin{align*}
		\{x \in \mc{U}_{L}({\FF_{\pp'}}) : \rho'(\frob_x) \in \mc C\}
		\quad \text{and} \quad \{x \in \mc{U}({\FF_{\pp}}) : \rho(\frob_x) \in \mc C\}.
	\end{align*}
	By Proposition~\ref{proposition:good-cover}(b), $\goodGaloisSet_{\mf{p}}$ contains the latter set, so we have
	\begin{align*}
		|\goodGaloisSet_{\mf{p}}| \ge |\{x \in \mc{U}({\FF_{\pp'}}) : \rho(\frob_x) \in \mc C\}| &= |\{x \in \mc{U}_{L}({\FF_{\pp'}}) : \rho'(\frob_x) \in \mc C\}| \\
		&= \left( \frac{|\mc C|}{|G|} + O((\N(\mf{p}'))^{-1/2}) \right) \cdot |\mc{U}_{L}({\FF_{\pp'}})|,
	\end{align*}
	where the last step above follows from Theorem~\ref{theorem:ekedahl}. Now, we have the estimate
    \[
		|\mc{U}_{L}({\mathbb F_{\mf{p}'}})| \gg (\N(\mf{p}'))^r,
	\]
	because the complement of $(\mc{U}_{L})_{\mathbb F_{\pp'}}$ in $(\bp^r_{\mc{O}_{L}})_{\mathbb F_{\pp'}}$ has codimension at least 1, since $\pp \notin S$. Combining our results, and using that $S$ is a finite set, we find that
	\begin{align*}
		|T_\pp| \geq \left( \frac{|\mc C|}{|G|} + O\left(\N(\mf{p}')^{-1/2}\right) \right) \cdot |\mc{U}_{L}({\FF_{\pp'}})| &\gg \N(\mf{p}')^r = \N(\mf{p})^r. \qedhere
	\end{align*}
\end{proof}

The following lemma completes our verification of the sieve setup by constructing the necessary constants $\sigma, \tau$.

\begin{lemma}
	\label{lemma:bounding-Y-reduction}
	There are constants $\galSlope,\galMin > 0$ so that for all $\pp \in \Sigma^m_K$ with $\N(\pp) > \tau$, we have
	$|Y_\pp| \leq \left( 1-\galSlope \right) \cdot \N (\pp)^r.$
\end{lemma}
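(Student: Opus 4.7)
The plan is to combine the defining property of $Y$, which forces reductions of its elements to avoid the set $T_\pp$, with the lower bound $|T_\pp| \gg \N(\pp)^r$ provided by Proposition~\ref{proposition:good-v}.

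First I would unpack the definition of $Y$: if $u \in Y$, then $A_u$ fails hypothesis (2) of Theorem~\ref{theorem:lombardo} at \emph{every} prime $\pp$ with $\N(\pp) \le (\log B)^{n+1}$. For $\pp \notin S$ in this range, this translates into the statement that the reduction $\overline u \in \mathbb F_\pp^r$ of $u$ modulo $\pp$ satisfies $\overline u \notin T_\pp$. Indeed, there are two cases: if $A_u$ has bad reduction at $\pp$, then $\overline u$ lies in $\mathcal Z_{\mathbb F_\pp}$, which is disjoint from $T_\pp$ since $T_\pp \subset \mathcal U_{\mathbb F_\pp}$; and if $A_u$ has good reduction at $\pp$ but the splitting field of $\charpoly_{A_u}(\frob_\pp)$ does not have Galois group $\gee$, then $\overline u \notin T_\pp$ directly by the definition of $T_\pp$. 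Either way, $Y_\pp \cap T_\pp = \varnothing$, so
\[
|Y_\pp| \le \N(\pp)^r - |T_\pp|.
\]

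Next I would invoke Proposition~\ref{proposition:good-v} to produce constants $c, \tau_0 > 0$ such that $|T_\pp| \ge c \cdot \N(\pp)^r$ for all $\pp \in \Sigma_K^m$ with $\N(\pp) > \tau_0$. Setting $\galSlope \defeq c$ and $\galMin \defeq \max\{\tau_0,\, \max_{\pp \in S}\N(\pp)\}$ (the latter finite since $S$ is finite) then yields $|Y_\pp| \le (1 - \galSlope)\N(\pp)^r$ for all $\pp \in \Sigma_K^m$ with $\N(\pp) > \galMin$, as required.

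The only mild subtlety---rather than a genuine obstacle---is that the defining property of $Y$ only controls reductions at primes with $\N(\pp) \le (\log B)^{n+1}$, so the bound in the lemma is informative precisely in that range. This matches the intended usage in the proof of Proposition~\ref{lemma:davide-2}, where the large sieve parameter is $Q = (\log B)^{n+1}$ and only primes with $\N(\pp) < Q$ contribute nonzero $\omega_\pp$, so the restricted range suffices. A minor bookkeeping point, which I would address in passing, is that $|T_\pp|$ and $|Y_\pp|$ are a priori measured inside $\mathcal U_{\mathbb F_\pp}$ and $\mathbb F_\pp^r$ respectively, but the discrepancy between these ambient sets (i.e., hyperplanes at infinity and the complement of $\mathcal U$) is of lower order $O(\N(\pp)^{r-1})$ and gets absorbed by slightly shrinking $\galSlope$.
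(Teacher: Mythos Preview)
Your proof is correct and follows essentially the same approach as the paper: use Proposition~\ref{proposition:good-v} to get $|T_\pp| \gg \N(\pp)^r$, observe that $Y_\pp$ avoids $T_\pp$, and absorb the $O(\N(\pp)^{r-1})$ discrepancy from the complement of $\mathcal U_{\mathbb F_\pp}$ by slightly adjusting the constants. Your version is in fact more explicit than the paper's about why $Y_\pp \cap T_\pp = \varnothing$ and about the range restriction $\N(\pp) \le (\log B)^{n+1}$, both of which the paper leaves implicit.
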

\begin{proof}
By Proposition~\ref{proposition:good-v}, there are constants $\galSlope', \galMin' > 0$ so that, for all $\pp \in \Sigma^m_K$ with $\N(\pp) > \galMin'$,
we have $|\goodGaloisSet_{\mf{p}}| \geq \galSlope' \cdot(\N(\pp))^r$.
For such $\mf{p}$, we have that
\[
	|Y_{\mf{p}}| \le (1 - \galSlope') \cdot (\N(\mf{p}))^r + O((\N(\mf{p}))^{r-1}),
\]
where the error term is on order of $\N(\pp)$ smaller than the main term because $\mc{Z}$ has codimension at least $1$ in $\bp^r_{\mc{O}_K}$. By replacing $\galSlope'$ with a slightly smaller $\galSlope$ and $\galMin'$ with a slightly larger $\galMin$, we may write
\[
	|Y_{\mf{p}}| \le (1 - \galSlope) \cdot (\N(\mf{p}))^r. \hfill \qedhere
\]
\end{proof}

\subsubsection{Discussion of Heights}
\label{subsubsection:appendix-on-heights}
In this section, we prove a result that describes the relationship between the absolute multiplicative height on projective space and the absolute logarithmic Faltings height. Let $\on{Ht}$ be the height on $\bp_K^r$ as defined in~\ref{weaintevergonnaberoyals}, and let $h$ be the Faltings height. Let $\log\on{Ht}$ be the absolute logarithmic height on $\bp^r(\overline{K})$, and note that $\log\on{Ht}$ naturally restricts to a logarithmic height function defined on the open subscheme $U \subset \bp_K^r$.

Let $\ag$ be the moduli stack of $g$-dimensional PPAVs, and let $p \colon \ug \ra\ag$ be the universal family of abelian varieties. Let $\pi\colon \ag\ra \coarseag$ be its coarse moduli space, and let $j(A)\in \coarseag(K)$ be the closed point represented by $A$. As in \cite[Section 2]{FalFinite}, we choose $n\in\bn$ such that the line bundle $\mathscr{L}=((\pi \circ p)_*\omega_{\ug/\ag})^{\otimes n}$ is very ample, where $\omega_{\ug/\ag}$ is the canonical sheaf of $p \colon \ug \ra \ag$. Fix an embedding $i\colon\coarseag\hookrightarrow\bp^N$ with $i^*\so_{\mathbb P^N}(1) \simeq \mathscr{L}$. The modular height $\log \on{Ht}(j(A))$ of $A$ is then the restriction along $i$ of the absolute logarithmic height (i.e.,~the absolute logarithmic height of $j(A)$ considered as a point of $\bp^N(K)$). On the other hand, $\so_{\mathbb P^N}(1)$ is a metrized line bundle and restricts to give a metric on $\mathscr{L}$ \cite[p.~36]{FalRP}; we denote by $\log\on{Ht}_{\mathscr{L}}$ the corresponding height function on $\coarseag$.

We now relate the height on projective space and the Faltings height by piecing together results from the literature on heights:

\begin{lemma} \label{lemma: height-0}
Let $g$ be a positive integer, $K$ a number field, and let $n\in \bn$ be as in the definition of the modular height. Then there exist constants $\alpha$ and $\beta$ such that for every principally polarized abelian variety $A$ over $K$, we have
\[|n\cdot h(A)-\log\on{Ht}(j(A))|\leq \alpha \cdot \log\max\{1, \log\on{Ht}(j(A))\}+\beta.\]
\end{lemma}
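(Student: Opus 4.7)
The plan is to exploit the fact that both $n \cdot h(A)$ and $\log\on{Ht}(j(A))$ are Arakelov heights on $\coarseag$ associated to the same line bundle $\mathscr{L}$, differing only in the choice of Hermitian metric at the archimedean places. The comparison then reduces to analyzing the ratio of these two metrics on $\mathscr{L}$.

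First, I would identify $\log\on{Ht}(j(A))$ with $\log\on{Ht}_{\mathscr{L}}(j(A))$ up to a uniformly bounded error: this is essentially the statement that the absolute logarithmic height on $\bp^N$ coincides with the Arakelov height with respect to $\so_{\bp^N}(1)$ equipped with its Fubini--Study metric, a standard consequence of the definition of heights on projective space. So it suffices to compare $n \cdot h(A)$ with $\log\on{Ht}_{\mathscr{L}}(j(A))$. By the definition of the Faltings height and of $\mathscr{L} = ((\pi\circ p)_*\omega_{\ug/\ag})^{\otimes n}$, the quantity $n \cdot h(A)$ equals the Arakelov height of $j(A)$ with respect to $\mathscr{L}$ equipped with the $n$-th tensor power of the canonical $L^2$ (Hodge) metric on the Hodge bundle $(\pi \circ p)_*\omega_{\ug/\ag}$. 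Consequently, $n \cdot h(A) - \log\on{Ht}_{\mathscr{L}}(j(A))$ is, up to bounded error, the sum over archimedean places of $K$ of the logarithm of the ratio of the $L^2$ metric and the pulled-back Fubini--Study metric on $\mathscr{L}$, evaluated at $j(A)$.

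The crux, and the main obstacle, is the following analytic statement: this log ratio is a continuous function on $\coarseag(\bc)$ that is \emph{not} bounded, because the Hodge metric has logarithmic singularities along the boundary of any toroidal compactification of $\coarseag$. However, the growth of this function at the boundary is controlled: it is bounded by $\alpha \cdot \log\max\{1,\log\on{Ht}(j(A))\} + \beta$ for suitable constants $\alpha,\beta$, reflecting the fact that a point $j(A)$ of large modular height corresponds to an abelian variety lying close to the boundary of $\coarseag$, and that the Hodge metric degenerates only logarithmically in the standard degeneration parameters (the $q$-coordinates of the toroidal compactification). This quantitative comparison is well-established: it is essentially contained in the work of Faltings and Chai on degenerations of abelian varieties, with explicit bounds of the desired shape worked out by Silverman for $g=1$ and by Pazuki for arbitrary $g$. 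In practice, the proof would conclude by invoking the appropriate comparison theorem from this literature and unwinding the normalization constants for $n$ and for the choice of projective embedding $i$, rather than re-deriving the boundary analysis from scratch.
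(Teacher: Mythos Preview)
Your proposal is correct and follows essentially the same two-step decomposition as the paper: first compare $\log\on{Ht}(j(A))$ with $\log\on{Ht}_{\mathscr{L}}(j(A))$ up to a bounded constant, then compare $n\cdot h(A)$ with $\log\on{Ht}_{\mathscr{L}}(j(A))$ with a $\log\log$ error. The only difference is that the paper dispatches both steps by direct citation---\cite[B.3.2(b)]{afraidofheights} for the first and \cite[Proof of Lemma 3]{FalFinite} for the second---whereas you unpack the content of Faltings' argument (the logarithmic singularity of the Hodge metric along the boundary of a toroidal compactification) before pointing to the same circle of references.
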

\begin{proof}
By~\cite[Proof of Lemma 3]{FalFinite}, there exist constants $\alpha_1$ and $\beta_1$ such that for all abelian varieties $A/K$, we have
\[|n\cdot h(A)-\log\on{Ht}_{\mathscr{L}}(j(A))|\leq \alpha_1 \cdot \log(\log\on{Ht}_{\mathscr{L}}(j(A)))+\beta_1.\]
By \cite[B.3.2(b)]{afraidofheights}, there is a constant $\beta_2$ such that
\[|\log\on{Ht}_{\mathscr{L}}(j(A))-\log\on{Ht}(j(A))|\leq\beta_2. \qedhere \]
\end{proof}

\begin{lemma} \label{lemma:height}
		There exist constants $c_0$ and $d_0$ depending only on $A \to U$ such that
		\[
			h(A_u) \le c_0 \log \on{Ht}(u) + d_0
		\]
		for all $u \in U(K)$.
	\end{lemma}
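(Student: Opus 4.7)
The plan is to combine Lemma~\ref{lemma: height-0}, which relates the Faltings height to the modular height, with the functoriality of naive heights under morphisms of projective varieties. Concretely, the family $A \to U$ induces a modular morphism $\psi\colon U \to \coarseag$ sending $u \mapsto j(A_u)$, and I will bound $\log\on{Ht}(j(A_u))$ above by a constant multiple of $\log\on{Ht}(u)$, plus a constant.

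First I would consider the composition $i \circ \psi \colon U \to \mathbb P^N$, where $i\colon \coarseag \hookrightarrow \mathbb P^N$ is the embedding fixed in Section~\ref{subsubsection:appendix-on-heights}. Since $U \subset \mathbb P^r_K$ is a quasi-projective variety and $i\circ\psi$ is a morphism, it extends to a rational map $\Phi\colon \mathbb P^r_K \dashrightarrow \mathbb P^N$ defined by some tuple of homogeneous polynomials of a common degree $d$ depending only on the family $A \to U$. By the standard functoriality of the absolute multiplicative height under such a rational map (see, e.g., \cite[Theorem B.2.5]{afraidofheights}), there is a constant $\beta_3$, depending only on $\Phi$, with
\[
    \log\on{Ht}(i(\psi(u))) \;\le\; d\cdot \log\on{Ht}(u) + \beta_3
\]
for every $u \in U(K)$ on which $\Phi$ is defined, which includes all of $U(K)$.

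Next, I would apply Lemma~\ref{lemma: height-0} to $A_u$, which yields
\[
    n\cdot h(A_u) \;\le\; \log\on{Ht}(j(A_u)) + \alpha\cdot \log\max\{1,\,\log\on{Ht}(j(A_u))\} + \beta.
\]
Substituting the bound from the previous step gives
\[
    n\cdot h(A_u) \;\le\; d\cdot \log\on{Ht}(u) + \beta_3 + \alpha\cdot \log\max\{1,\,d\cdot\log\on{Ht}(u) + \beta_3\} + \beta.
\]
Since the $\log\log$ term grows more slowly than any positive multiple of $\log\on{Ht}(u)$, there is a constant $C$ depending only on $\alpha,\beta,\beta_3,d$ such that the right-hand side is bounded by $(d+1)\log\on{Ht}(u) + C$ whenever $\log\on{Ht}(u)$ is sufficiently large; for the finitely many remaining $u$ (which form a bounded region in $\mathbb P^r(K)$ since we are over a number field) the inequality holds after enlarging the additive constant. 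Dividing by $n$ yields the desired bound with $c_0 \defeq (d+1)/n$ and $d_0$ chosen appropriately.

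The only mildly delicate point is the extension of $i \circ \psi$ to a rational map $\mathbb P^r_K \dashrightarrow \mathbb P^N$ defined by forms of a common degree $d$; this is standard since any morphism from an open subscheme of projective space to projective space is given by sections of $\mathcal O(d)$ for some $d$, after clearing denominators. Once this is in place, the remainder of the argument is routine manipulation of heights and does not present a genuine obstacle.
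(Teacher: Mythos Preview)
Your proposal is correct and follows essentially the same approach as the paper: bound $\log\on{Ht}(j(A_u))$ in terms of $\log\on{Ht}(u)$ via functoriality of heights under the morphism $U \to \coarseag \hookrightarrow \mathbb P^N$, and then apply Lemma~\ref{lemma: height-0}. The paper compresses the first step into a one-line citation of \cite[p.~19, Section 2.6, Theorem]{serre1989lectures} and leaves the absorption of the $\log\log$ term implicit, whereas you spell out the rational-map extension, cite \cite[Theorem B.2.5]{afraidofheights} instead, and handle the sublogarithmic error explicitly; these are differences of exposition, not of strategy.
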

	\begin{proof}
	By \cite[p.~19, Section 2.6, Theorem]{serre1989lectures},
$\on{Ht}(j(A_u)) \ll \on{Ht}(u)$ and $\on{Ht}(u) \ll \on{Ht}(j(A_u))$ for all $u \in U$. The result then follows from Lemma~\ref{lemma: height-0}.
    \end{proof}

\section{Applications of Theorem~\ref{theorem:main}}\label{iknewyouweretrouble}

The purpose of this section is to demonstrate that the main result, Theorem~\ref{theorem:main}, can be applied to a number of interesting families of PPAVs, such as families containing a dense open substack of the locus of Jacobians of hyperelliptic curves, trigonal curves, or plane curves. In Section~\ref{crit}, we prove a general tool that is needed to guarantee big monodromy for the loci in our applications, and in Section~\ref{androidbeatsios}, we examine each of these \mbox{applications in detail.}

\subsection{Finite-Index Criterion}\label{crit}

In this section we prove Proposition~\ref{proposition:stacky-dominant-map-surjective-fundamental-group}, which will be applied
in the setting of Theorem~\ref{theorem:main} to determine
that $U$ has big monodromy when its image in the moduli stack of
abelian varieties has big monodromy. We begin by recalling an elementary criterion giving surjectivity for the map on \'{e}tale fundamental groups induced by a morphism of Deligne-Mumford stacks. By {\em Deligne-Mumford stack}, we mean a stack in the \'etale topology with representable diagonal (i.e., representable by algebraic spaces),
which has an \'etale surjective morphism from a scheme.
For a general reference on stacks, see~\cite{olsson2016algebraic} or~\cite{laumon-bailly:champs-algebriques}; also, see~\cite{stacks-project} for a more comprehensive reference.

\begin{lemma}
       \label{lemma:surjectivity-criterion-pi1}
       Suppose $f\colon X \rightarrow Y$ is a map of Deligne-Mumford stacks.
       The fiber product $U \times_Y X$ is connected
       for all finite connected \'etale maps $U \rightarrow Y$
       if and only if
       the induced map $\pi_1(X) \rightarrow \pi_1(Y)$ is surjective.
       In particular, if $X$ and $Y$ are normal, integral, and Noetherian, and $f: X \rightarrow Y$ is a flat map with connected geometric
       generic fiber, then the induced map $\pi_1(X) \rightarrow \pi_1(Y)$ is surjective.
\end{lemma}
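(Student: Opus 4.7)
The proof splits into the equivalence and the ``in particular'' consequence. For the equivalence, the plan is to invoke the Grothendieck--Galois formalism: the category of finite \'etale covers of $Y$ is equivalent to the category of finite continuous $\pi_1(Y)$-sets, with connected covers corresponding to transitive sets, and the pullback functor $U \mapsto U \times_Y X$ identified with restriction of scalars along $\pi_1(X) \to \pi_1(Y)$. Thus the stated condition becomes: every transitive $\pi_1(Y)$-set remains transitive as a $\pi_1(X)$-set. Surjectivity obviously implies this, while if the image $H \subset \pi_1(Y)$ is a proper closed subgroup, then because $\pi_1(Y)$ is profinite one may choose a proper open subgroup $V \supsetneq H$, and the transitive $\pi_1(Y)$-set $\pi_1(Y)/V$ is not transitive under $\pi_1(X)$ since the coset $V$ is $H$-fixed.

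For the ``in particular'' statement, I would apply this criterion: let $U \to Y$ be any finite connected \'etale cover, so that the goal is to show $V \defeq U \times_Y X$ is connected. Since $Y$ is normal and $U \to Y$ is \'etale, $U$ is normal, and hence integral because $U$ is connected. It then suffices to show $V$ is irreducible, which I would do by analyzing the flat projection $V \to U$.

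The generic fiber of this projection equals $X_{\eta_Y} \times_{\kappa(\eta_Y)} \kappa(\eta_U)$, the base change of the generic fiber of $f$ along the finite separable extension $\kappa(\eta_U)/\kappa(\eta_Y)$. Since $X$ is normal, both $X_{\eta_Y}$ (a localization of $X$) and $X_{\eta_U}$ (a further \'etale base change) are normal; since $X_{\overline{\eta_Y}}$ is connected by hypothesis and the map $X_{\overline{\eta_Y}} \to X_{\eta_U}$ is a faithfully flat surjection, $X_{\eta_U}$ is connected, hence integral. Flatness of $V \to U$ over the integral base $U$ then produces a bijection between generic points of $V$ and generic points of $X_{\eta_U}$, forcing $V$ to be irreducible and thus connected. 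The main obstacle is precisely this last step: flatness is what confines every generic point of $V$ to lie over $\eta_U$, while the combination of normality of $X$ with connectedness of the geometric generic fiber is what guarantees that $X_{\eta_U}$ has only one generic point. In the stacky setting, one reduces to this scheme-theoretic argument by passing to a smooth atlas.
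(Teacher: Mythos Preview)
Your proof is correct and follows essentially the same approach as the paper's. Both argue the equivalence via the Galois correspondence and a profinite transitivity criterion, and both deduce the second statement by using flatness of $U \times_Y X \to U$ over the integral base $U$ to reduce irreducibility of the fiber product to that of the generic fiber, which is then seen to be irreducible by combining normality with the connected-geometric-generic-fiber hypothesis; the paper phrases this last step as ``reducible $V$ forces reducible geometric generic fiber,'' but the content is identical.
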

\begin{proof}
The first part holds in greater generality as a statement about Galois categories; see~\cite[\href{https://stacks.math.columbia.edu/tag/0BN6}{Tag 0BN6}]{stacks-project}.
As for the second part,
we only need verify that a connected finite \'etale cover $U \rightarrow Y$ pulls back to a connected cover of $X$.
Note that because $X$ and $Y$ are normal and integral, \'etale covers of $X$ and $Y$ are connected if and only if they are irreducible. Here, we are using that
normal and connected implies irreducible and that
normality is local in the \'etale topology over Noetherian stacks.
To see why normality is local in the \'etale topology over a Deligne-Mumford
stack, 
note first that normality is local in the \'etale topology
over any base scheme by
~\cite[\href{http://stacks.math.columbia.edu/tag/03E7}{Tag 03E7}]{stacks-project}.
Using this, one defines a Deligne-Mumford
stack to be normal if any \'etale cover by a scheme is normal.
From this definition,
it follows that
normality of a Deligne-Mumford stack is equivalent to normality of any \'etale cover.

Thus, we only need show that if $U \rightarrow Y$ is any irreducible finite \'etale cover, then so is $X \times_Y U \rightarrow X$.
But this follows from the assumptions that $f$ is flat and $U$ is integral,
which implies all generic points of $X \times_Y U$ map to the generic point of $U$.
So, if $X \times_Y U$ were reducible, the geometric generic fiber over $U$ would also be
reducible, which contradicts the assumption that $f$ has connected geometric generic fiber, since a geometric generic fiber of
$X \times_Y U$ is also a geometric generic fiber of $f$.
\end{proof}

\begin{proposition} \label{proposition:stacky-dominant-map-surjective-fundamental-group}
Let $k$ be an arbitrary field of characteristic $0$. Suppose $X$ is a scheme and $Y$ is a Deligne-Mumford stack over $k$, both of which are normal, integral, separated, and finite type over $k$, and let $f\colon X \rightarrow Y$ be a dominant map.
Then, the image of the induced map
       $\pi_1(X) \rightarrow \pi_1(Y)$ has finite index in $\pi_1(Y)$. If, in addition, the geometric
       generic fiber of $f$ is connected, then the map $\pi_1(X) \rightarrow \pi_1(Y)$ is surjective.
\end{proposition}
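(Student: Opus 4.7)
The plan is to prove part (2) first by reducing to Lemma~\ref{lemma:surjectivity-criterion-pi1} via generic flatness, and then to deduce part (1) by factoring $f$ through an intermediate normalization that enforces connected geometric generic fibers.

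For part (2), I would first apply generic flatness (valid on Noetherian DM stacks after reducing to an \'etale atlas) to produce a dense open substack $V \subset Y$ over which $f$ is flat. Since $f$ is dominant, $f^{-1}(V)$ is dense open in $X$, so $f^{-1}(V)$ and $V$ are still normal and integral, and the geometric generic fiber of $f^{-1}(V) \to V$ is the same as that of $f$, hence connected. Lemma~\ref{lemma:surjectivity-criterion-pi1} then yields a surjection $\pi_1(f^{-1}(V)) \twoheadrightarrow \pi_1(V)$. The next ingredient I would establish is that for any normal integral Noetherian algebraic stack $Z$ and dense open substack $W \subset Z$, the map $\pi_1(W) \to \pi_1(Z)$ is surjective: using the first part of Lemma~\ref{lemma:surjectivity-criterion-pi1}, it suffices to note that any connected finite \'etale cover $\wt Z \to Z$ is irreducible (normal plus connected), so $\wt Z \times_Z W$ is a dense open substack of $\wt Z$ and therefore irreducible, hence connected. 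Applying this to the inclusions $f^{-1}(V) \subset X$ and $V \subset Y$ and chasing the resulting commutative square of $\pi_1$'s, in which the top arrow and both vertical arrows are surjective, gives the desired surjectivity of $\pi_1(X) \to \pi_1(Y)$.

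For part (1), I would construct an intermediate factorization that upgrades part (2) into a finite-index statement. Since $X$ is of finite type over $k$, the function field $k(X)$ is finitely generated over $k$, and hence over the subfield $k(Y)$; a standard transcendence-basis argument then shows that the algebraic closure $\wt L$ of $k(Y)$ in $k(X)$ is a finite extension of $k(Y)$, separable since $\on{char}(k) = 0$. Let $\wt Y \to Y$ be the normalization of $Y$ in $\wt L$; this is a finite, generically \'etale morphism of normal integral DM stacks, and the universal property of normalization, together with the normality of $X$, causes $f$ to factor as $X \to \wt Y \to Y$. By construction $\wt L$ is algebraically closed in $k(X)$, so the generic fiber of $X \to \wt Y$ is geometrically integral; in particular its geometric generic fiber is connected. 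Part (2) now yields a surjection $\pi_1(X) \twoheadrightarrow \pi_1(\wt Y)$. On the other hand, there is a dense open $V \subset Y$ over which $\wt Y \times_Y V \to V$ is finite \'etale, so $\pi_1(\wt Y \times_Y V) \to \pi_1(V)$ has finite-index image; combining with the dense-open surjection argument established above transfers this to show that $\pi_1(\wt Y) \to \pi_1(Y)$ has finite-index image, and composing with the surjection from $\pi_1(X)$ finishes the proof.

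The main technical obstacle I anticipate is verifying that each of the scheme-theoretic ingredients — generic flatness, the dense-open $\pi_1$-surjection, and the construction of the normalization $\wt Y$ in a finite field extension — transports cleanly to the DM stack $Y$. Each of these should be handled by descent from an \'etale atlas $Y' \to Y$, but some care is required to package the constructions intrinsically on $Y$; in particular, checking that the normalization of a normal integral DM stack in a finite separable extension of its function field is again a normal integral DM stack finite over $Y$ is the most delicate point.
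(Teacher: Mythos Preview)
Your proof is correct and takes a genuinely different route from the paper's. Both proofs handle the surjectivity statement similarly, by reducing to Lemma~\ref{lemma:surjectivity-criterion-pi1} after arranging flatness (you via generic flatness plus a dense-open $\pi_1$-surjection argument, the paper via generic smoothness). The real divergence is in the finite-index statement. The paper first reduces to $f$ smooth, then uses the local structure of smooth morphisms to a Deligne--Mumford stack to factor an \'etale cover $U \to X$ through $\mathbb{A}^N_Y$, so that $U \to \mathbb{A}^N_Y$ is \'etale and, after shrinking, finite \'etale; finiteness of index then comes from this finite \'etale cover together with the surjectivity of $\pi_1(\mathbb{A}^N_Y) \to \pi_1(Y)$. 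Your approach instead builds a Stein-type factorization $X \to \wt Y \to Y$, where $\wt Y$ is the normalization of $Y$ in the algebraic closure of $k(Y)$ in $k(X)$: the first map has geometrically connected generic fiber, so part~(2) applies, while the second is generically finite \'etale, which gives the finite index directly. Your argument is arguably more conceptual, isolating the obstruction to surjectivity as a finite field extension, and it avoids invoking the \'etale-local structure of smooth morphisms to a stack; the paper's argument, on the other hand, sidesteps the need to construct and verify properties of the normalization of a Deligne--Mumford stack in a finite extension of its function field, which (as you note) is the one point requiring care in your approach.
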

\begin{proof}
To begin, we reduce to the case in which $f$ is smooth.
By generic smoothness, we may replace $X$
by a dense open $X' \subset X$ so that $f|_{X'}$ is smooth.
Since, $\pi_1(X') \rightarrow \pi_1(X)$ is a surjection by Lemma~\ref{lemma:surjectivity-criterion-pi1}, in order to prove the proposition, we may replace $X$ by $X'$.

The last sentence of this Proposition follows from Lemma~\ref{lemma:surjectivity-criterion-pi1} (here we only needed that the map be $f$ be flat, but we have already reduced to the case it is smooth). To conclude, we only need prove that the image of $\pi_1(X)\rightarrow \pi_1(Y)$ has finite index in $\pi_1(Y)$, without the assumption that the geometric generic fiber of $f$ is connected. Since $f$ is smooth and $Y$ is Deligne-Mumford, we can find a scheme $U$ and a dominant \'etale map $U \rightarrow X$ so that $U \rightarrow Y$ factors through $\mathbb A^N_Y$ where $N$ is the dimension of the geometric generic fiber of $f$ and $U \rightarrow \mathbb A^N_Y$ \'etale.
So, after passing to a dense open substack of $W \subset \mathbb A^N_Y$ and a dense open subscheme $U' \subset U$, we may assume that $U' \rightarrow W$ is a finite \'etale cover:
To see why, take a smooth cover of $\mathbb A^N_Y$ by a scheme. The pullback to $U$ is a separated algebraic space, so it has a dense open subspace that is a scheme by \cite[Theorem 6.4.1]{olsson2016algebraic}. The finiteness claim then follows because the resulting \'etale morphism of schemes is locally quasi-finite, of finite type, and quasi-separated, hence generically finite on the target.
Since $U' \rightarrow W$ is finite \'etale, $\pi_1(U') \rightarrow \pi_1(W)$ has finite index.
Because the maps $\pi_1(W) \rightarrow \pi_1(\mathbb A^N_Y)$ and $\pi_1(\mathbb A^N_Y) \rightarrow \pi_1(Y)$ are surjective by Lemma~\ref{lemma:surjectivity-criterion-pi1}, the composition $\pi_1(U') \rightarrow \pi_1(Y)$ has finite index in $\pi_1(Y)$, and hence so does $\pi_1(X) \rightarrow \pi_1(Y)$.
\end{proof}

\subsection{Applications}\label{androidbeatsios}
Let $K$ be a number field with fixed algebraic closure $\ol{K}$, let $\mg$ denote the moduli stack of curves of genus $g$ over $K$,
and let $\ag$ denote the moduli stack of PPAVs of dimension $g$ over $K$. We have a natural map $\tau_g \colon \mg \rightarrow \ag$ given by the Torelli map, which sends a curve to its Jacobian. Let $\ug$ denote the universal family over $\ag$. Note that if $U$ is any scheme and $A \to U$ is a family of PPAVs, then there exist maps $A \to \ug$ and $U \to \ag$ so that $A$ equals the fiber product $U \times_\ag \ug$.

We will also be interested in the locus of smooth hyperelliptic curves of genus $g$, $\hyperelliptic g \subset \mg$, and locus of trigonal curves of genus $g$, $\trigonal g \subset \mg$.
If a curve $C$ is trigonal, there exists a unique nonnegative integer $M$, called the Maroni invariant, with the property that there is a canonical embedding into the Hirzebruch surface $\mathbb F_M \defeq \mathbb P_{\mathbb P^1} (\mathcal O_{\mathbb P^1} \oplus \mathcal O_{\mathbb P^1}(M))$. As mentioned in \cite{patel2015chow}, the Maroni invariant takes on all integer values between $0$ and $\frac{g+2}{3}$ with the same parity as $g$. Let $\trigonal g(M) \subset \mg$ denote the substack of trigonal \mbox{curves of Maroni invariant $M$.}

In order to more easily utilize Proposition~\ref{proposition:stacky-dominant-map-surjective-fundamental-group} for the purpose of giving interesting examples of Theorem ~\ref{theorem:main}, we record the following easy consequence of Proposition~\ref{proposition:stacky-dominant-map-surjective-fundamental-group}:
\vspace*{-0.2cm}
\begin{corollary} \label{corollary:criterion-for-applying-main}
       Let $U \subset \mathbb P^r_K$ be an open subscheme, and let $A \rightarrow U$ be a family of $g$-dimensional PPAVs. Let $\phi \colon U \rightarrow \ag$ be the map induced by the universal property of $\ag$.
Let $V$ be the smallest locally closed substack of $\ag$ through which $U$ factors, and let $W \subset \ag$ be a normal integral substack.
Suppose further that $W \cap V$ is dense in $W$ and that $V$ is normal.
Then, if $W$ has big monodromy, so do $V$ and $U$.
Furthermore, if the geometric generic fiber of $\phi$ is irreducible, then the monodromy of $V$ agrees with that of $U$. In particular, the conclusion of Theorem ~\ref{theorem:main}
holds for $U$.
\end{corollary}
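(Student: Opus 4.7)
The plan is to first deduce big monodromy for $V$ from that for $W$ through their shared dense open $V \cap W$, then transfer big monodromy from $V$ to $U$ via Proposition~\ref{proposition:stacky-dominant-map-surjective-fundamental-group}, and finally apply Theorem~\ref{theorem:main}. The subtlest step will be verifying that the inclusion $V \cap W \hookrightarrow W$ induces a surjection on \'etale fundamental groups, which is where normality of $W$ is essential.

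First I would observe that $V \cap W$ is in fact open in $W$: as the intersection of the locally closed substack $V$ with $W$, it is locally closed in $W$, and combined with density in the irreducible stack $W$, the closed part of its locally-closed decomposition must be all of $W$, so $V \cap W$ is open and dense in $W$. Being an open immersion into the normal integral stack $W$, the morphism $V \cap W \hookrightarrow W$ is flat with connected (one-point) geometric generic fiber, so Lemma~\ref{lemma:surjectivity-criterion-pi1} gives surjectivity $\pi_1(V \cap W) \twoheadrightarrow \pi_1(W)$. The monodromy representations of $V$ and of $W$ both factor through $\pi_1(\ag) \to \GSp_{2g}(\zh)$ associated to the universal family, and their restrictions to $\pi_1(V \cap W)$ coincide. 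Therefore the image of $\pi_1(V \cap W)$ in $\GSp_{2g}(\zh)$ equals $\mono_W$ by the surjectivity just shown, and is contained in $\mono_V$; so $\mono_V$ is open whenever $\mono_W$ is.

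Next I would apply Proposition~\ref{proposition:stacky-dominant-map-surjective-fundamental-group} to the dominant morphism $\phi\colon U \to V$. The hypotheses are satisfied: $U$ is normal and integral (being open in $\bp^r_K$), $V$ is normal by assumption and integral because the smallest locally closed substack containing the image of the integral scheme $U$ inherits irreducibility and reducedness, and both are separated of finite type over $K$. The proposition shows that the image of $\pi_1(U) \to \pi_1(V)$ has finite index, so $\mono_U$ sits as a finite-index subgroup of the open group $\mono_V$ and is therefore itself open; hence $U$ has big monodromy. If furthermore the geometric generic fiber of $\phi$ is irreducible, the same proposition upgrades this to a surjection $\pi_1(U) \twoheadrightarrow \pi_1(V)$, yielding $\mono_U = \mono_V$.

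Finally, to invoke Theorem~\ref{theorem:main} I need only verify non-isotriviality of $A \to U$, which is automatic: an isotrivial family would acquire trivial geometric monodromy after a finite \'etale base change, forcing the monodromy of $A \to U$ itself to be finite, contradicting openness in the infinite group $\GSp_{2g}(\zh)$ (since $g \geq 1$). The conclusion of Theorem~\ref{theorem:main} then applies directly to $A \to U$.
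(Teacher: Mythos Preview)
Your argument follows essentially the same route as the paper's: use Lemma~\ref{lemma:surjectivity-criterion-pi1} on the dense open $V\cap W\hookrightarrow W$ to transfer big monodromy from $W$ to $V\cap W$ (hence to $V$), then invoke Proposition~\ref{proposition:stacky-dominant-map-surjective-fundamental-group} on $\phi\colon U\to V$ to pass big monodromy up to $U$. Your treatment is in fact more careful than the paper's in two respects: you explain why $V\cap W$ is genuinely open in $W$, and you notice that Theorem~\ref{theorem:main} also requires non-isotriviality, a point the paper's proof passes over in silence.

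However, your verification of non-isotriviality is not correct. Big monodromy does \emph{not} imply non-isotriviality: take any abelian variety $A_0/K$ with $\rho_{A_0}(G_K)$ open in $\GSp_{2g}(\zh)$ and form the constant family $A_0\times U\to U$. This family is isotrivial, yet $\mono_A=\rho_{A_0}(G_K)$ is open. The flaw in your reasoning is the clause ``forcing the monodromy of $A\to U$ itself to be finite'': isotriviality forces only the \emph{geometric} monodromy to be finite, while the arithmetic contribution from $G_K$ can remain large. A correct argument observes instead that $V\cap W$ is dense in $W$, so $\dim V\ge \dim W$; thus whenever $W$ is positive-dimensional (as in every application in Theorem~\ref{corollary:examples}) the map $U\to\ag$ has positive-dimensional image and the family cannot be isotrivial. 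Strictly speaking the corollary as stated allows $W$ to be a point, so this is a small lacuna in the statement rather than in your strategy.
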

\vspace*{-0.4cm}
\begin{proof}
	By Lemma~\ref{lemma:surjectivity-criterion-pi1}, if $W$ has big monodromy so does the dense open subset $W \cap V \subset W$.
	Therefore, $V$ has big monodromy, because it contains $W \cap V$, which has big monodromy.
	The result then follows from Proposition~\ref{proposition:stacky-dominant-map-surjective-fundamental-group},
once we verify that both $U$ and $V$ are normal, irreducible, separated, and finite type over $K$, with $V$ Deligne-Mumford.
All of these conditions are immediate except possibly that $V$
is generically smooth, which holds by generic smoothness on a smooth \mbox{cover of $V$ by a scheme.}
\end{proof}
\vspace*{-0.2cm}
Before stating the main theorem of this section, we pause to describe more precisely what we mean
by ``the locus of plane curves.''
\begin{remark}
	\label{remark:locus-of-plane-curves}
	In Theorem \ref{corollary:abbreviated-examples}(c) and Theorem \ref{corollary:examples}\autoref{big-plane},
	we refer to the ``substack of Jacobians of plane curves of degree $d$,'' for $d \geq 3$,
	and we now make more precise what we mean
	by this locus.
	When $d=3$, all $1$-dimensional abelian varieties can be realized as the Jacobian of a degree $3$ plane curve,
so in this case we take the locus to be all of $\mathscr M_{1,1}$.
	For $d \geq 4$, we will define a locally closed substack of $\mg$, where $g = \binom{d-1}{2}$,
	and the locus of Jacobians of plane curves of degree $d$ will denote the image of this under the Torelli map.
	For $d \geq 4$, let $\pi_d\colon \scv_d \ra \bp^{\binom{d+2}{2}-1}$ denote the universal family over 
	the Hilbert scheme of plane curves of degree $d$,
	and let $U_d \subset \bp^{\binom{d+2}{2}-1}$ denote the dense open subscheme 
	over which $\pi_d$ is smooth.
	Since $\scv_d|_{U_d} \subset U_d \times \bp^2$,
	the action of $\PGL_3$ on $\bp^2$ induces an action on $\scv_d|_{U_d}$ and hence on $U_d$.
	Then, we define the substack of Jacobians of plane curves of degree $d$
	to be the stack theoretic quotient $[U_d/\PGL_3]$.
	
	Note that there is a natural map $[U_d/\PGL_3] \ra \mg$.
	It can be verified that this map is a locally closed immersion of stacks. Further, one
	can show $[U_d/\PGL_3]$ represents the functor associating to any base scheme $T$
	projective flat morphisms $f \colon C \ra T$ where each geometric fiber is a proper smooth curve of genus $g := \binom{d-1}{2}$
	with a degree $d$ invertible sheaf on $C$ which commutes with base change.	
	In this sense, $[U_d/\PGL_3]$ may naturally be referred to as ``the locus of plane curves of degree $d$''
	and it is evidently smooth, since $U_d$ is smooth, being a dense open subscheme of projective space.

	Let us now briefly sketch the proof of the two facts claimed above.
	First, one can first see that $[U_d/\PGL_3]$ represents the claimed
	functor by defining natural maps both ways and verifying they are mutually inverse.
	To show $[U_d/\PGL_3] \ra \mg$ is a locally closed immersion,
	one can factor $[U_d/\PGL_3] \ra \mg$ through the stack $G^2_d$ parameterizing $g^2_d$'s on 
	the universal curve over $\mg$,
	via a natural generalization of the definition given in \cite[Chapter XXI, Definition 3.12]{ACMG:geometryOfCurves}.
	One can check the map $[U_d/\PGL_3] \ra G^2_d$ is an open immersion from the definitions.
	Finally, one can verify that the map $G^2_d \ra \mg$ is a locally closed immersion, using that
	every smooth plane curve of degree at least $4$ has a unique $g^2_d$, see \cite[Appendix A, Exercises 17 and 18]{ACGH:I},
	and the valuative criterion for locally closed immersions \cite[Chapter 1, Corollary 2.13]{mochizuki2014foundations}.
\end{remark}

We are now in position to state and prove the main theorem of this section:
\vspace*{-0.2cm}
\begin{theorem} \label{corollary:examples}
Suppose $A \rightarrow U$ is a rational family of principally polarized abelian varieties
and define $V$ to be the smallest locally closed substack of $\ag$ through which $U$ factors.
The conclusion of Theorem~\ref{theorem:main} holds whenever $V$ is normal and contains a dense open substack of one of the following loci:
\begin{enumerate}
	       \item[\customlabel{big-hyperelliptic}{(a)}] The locus $\tau_g(\hyperelliptic g)$ for any $g \geq 1$.
		      For every $g \geq 1$, there exists a $U$ dominating $\tau_g(\hyperelliptic g)$ because $\hyperelliptic g$ is unirational.
               \item[\customlabel{big-maroni}{(b)}] The locus $\tau_g(\trigonal g(M))$ of Jacobians of trigonal curves with Maroni invariant $M < \frac{g}{3}-1$ for any $g \geq 5$. In this case, there exists $U$ dominating $\tau_g(\trigonal g(M))$ because \mbox{$\trigonal g(M)$ is unirational.}
		\item[\customlabel{big-trigonal}{(c)}] The locus of trigonal curves $\trigonal g$ in any $g \geq 3$.
We can take $U$ to be any open subscheme of $\trigonal g$, as $\trigonal g$ is rational.
               \item[\customlabel{big-plane}{(d)}] The locus of Jacobians of degree-$d$ plane curves for any $d \geq 3$. In this case, the open subscheme of the Hilbert scheme
of degree-$d$ plane curves parameterizing smooth curves is rational and dominates the locus of Jacobians of degree-$d$ plane curves.
\item[\customlabel{big-mg}{(e)}] The locus $\tau_g(\mg)$ for any $g \geq 1$. In this case, when $1 \leq g \leq 14$, $\mg$ is unirational, so there exists a $U$ dominating $\mg$. Moreover, when $3 \leq g \leq 6$, $\mg$ is rational, and so we may take $U$ to be any open subscheme of $\mg$.
               \item[\customlabel{big-ag}{(f)}] The locus $\ag$ for any $g \geq 1$. When $1 \leq g \leq 5$, $\ag$ is unirational, so such a $U$ exists.
\end{enumerate}
\end{theorem}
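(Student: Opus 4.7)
The plan is to apply Corollary~\ref{corollary:criterion-for-applying-main} separately in each of the six items, taking $W \subset \ag$ to be the locus named in that item. For every case, the proof reduces to checking three things: (i) $W$ is normal and integral, (ii) $W$ has big monodromy in $\GSp_{2g}(\zh)$, and (iii) there exists a rational family $U \subset \mathbb{P}^r_K$ dominating $W$, as claimed in the statement.

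Conditions (i) and (iii) are essentially classical or structural. Each of $\ag$, $\mg$, $\hyperelliptic g$, $\trigonal g$, $\trigonal g(M)$, and the plane-curve locus $[U_d/\PGL_3]$ from Remark~\ref{remark:locus-of-plane-curves} is smooth over $K$, being either a smooth Deligne-Mumford stack or a stacky quotient of a smooth scheme by a linear group; in particular each is normal and integral. For the rationality/unirationality inputs needed in (iii) I would cite the literature: $\hyperelliptic g$ is rational as a GIT quotient of binary forms of degree $2g+2$; $\trigonal g(M)$ is rational because it is open in the projectivization of a vector bundle on $\mathbb{P}^1$ parametrizing divisors of the appropriate class on $\mathbb{F}_M$; the full $\trigonal g$ is rational for $g \geq 3$ by Schreyer; the plane-curve locus is rational since $[U_d/\PGL_3]$ is a quotient of a dense open in projective space; $\mg$ is unirational for $g \leq 14$ (Severi, Sernesi, Chang--Ran) and rational for $3 \leq g \leq 6$ (Shepherd-Barron, Katsylo); and $\ag$ is unirational for $g \leq 5$ (Clemens--Griffiths, Donagi, Mori--Mukai, Verra).

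Condition (ii) carries the real content, and I would verify it case by case. Case~\ref{big-ag} is tautological, since the monodromy of the universal family on $\ag$ is all of $\GSp_{2g}(\zh)$ by construction. Case~\ref{big-mg} follows by applying Lemma~\ref{lemma:surjectivity-criterion-pi1} to the dominant Torelli map $\tau_g \colon \mg \to \ag$, whose geometric generic fiber is connected, reducing~\ref{big-mg} to~\ref{big-ag}. Case~\ref{big-hyperelliptic} is the main output of the authors' companion paper \cite{landesman-swaminathan-tao-xu:hyperelliptic-curves} on hyperelliptic Jacobians. The trigonal and plane-curve cases~\ref{big-maroni},~\ref{big-trigonal},~\ref{big-plane} I would deduce from the explicit monodromy computations collected in Remark~\ref{remark:mmmm}: case~\ref{big-trigonal} follows from case~\ref{big-maroni} because $\trigonal g(g \bmod 2)$ is a dense open substack of $\trigonal g$ of smallest Maroni invariant, so the monodromy of the smaller open already is $\GSp_{2g}(\zh)$; case~\ref{big-plane} is a separate monodromy calculation for Jacobians of smooth plane curves.

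The main obstacle I anticipate is the trigonal case~\ref{big-maroni}, where big monodromy must be verified directly for $\trigonal g(M)$ with $M < g/3 - 1$. The natural approach is to present these curves as members of an explicit linear system on the Hirzebruch surface $\mathbb{F}_M$, construct a Lefschetz pencil of degenerations to nodal trigonal curves in that system, and read off enough vanishing-cycle transvections to generate $\Sp_{2g}(\zh)$, in the spirit of A'Campo's argument for $\mg$; the hypothesis $M < g/3 - 1$ should enter as a codimension bound ensuring that such a pencil produces vanishing cycles in general position. Once~\ref{big-maroni} is in place, cases~\ref{big-trigonal},~\ref{big-mg},~\ref{big-ag} follow by the reductions above, while~\ref{big-hyperelliptic} and~\ref{big-plane} are independent inputs from existing work, and the rationality inputs for (iii) are essentially bookkeeping.
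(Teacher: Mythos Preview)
Your overall plan—apply Corollary~\ref{corollary:criterion-for-applying-main} and check normality, big monodromy, and (uni)rationality for each locus—is exactly the paper's approach. However, several of your case-by-case reductions are misrouted.

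The most serious error is your reduction of~\ref{big-mg} to~\ref{big-ag}. The Torelli map $\tau_g \colon \mg \to \ag$ is \emph{not} dominant for $g \geq 4$, since $\dim \mg = 3g-3 < g(g+1)/2 = \dim \ag$, so Lemma~\ref{lemma:surjectivity-criterion-pi1} does not apply as you claim. The paper runs the implication in the opposite direction: it establishes~\ref{big-mg} directly by citing \cite[5.12]{deligne1969irreducibility}, which computes the geometric monodromy of $\mg$ to be all of $\Sp_{2g}(\zh)$, and then deduces~\ref{big-ag} from~\ref{big-mg} because $\mg$ sits inside $\ag$.

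You also misidentify where the work lies in~\ref{big-maroni}. The big-monodromy statement for $\trigonal g(M)$ with $M < g/3 - 1$ is already in the literature: the paper simply cites \cite[Theorem, p.~2]{bolognesi2016mapping}, so no Lefschetz-pencil argument is needed. Relatedly, your reduction of~\ref{big-trigonal} to~\ref{big-maroni} only works for $g \geq 5$ (indeed,~\ref{big-maroni} itself is only stated for $g \geq 5$); for $g = 3, 4$ the paper instead observes that $\trigonal g$ is a dense open of $\mg$ and invokes~\ref{big-mg}. Finally, for~\ref{big-hyperelliptic} the relevant input is not the companion paper but the classical computations of Mumford \cite[Lemma 8.12]{mumford:tata-lectures-on-theta-ii} and A'Campo \cite{acampo:tresses-monodromie-et-le-groupe-symplectique}.
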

\begin{proof}
By Corollary~\ref{corollary:criterion-for-applying-main}, it suffices to check that each of the families enumerated above has a dense open substack which has big monodromy, is irreducible, and is normal, and to verify the rationality and unirationality claims made above.
Irreducibility of these loci is well-known.
Note that in the first five cases, if we denote the locus in question by $\tau_g(W) \subset \ag$, it suffices to verify that $W \subset \mg$ is smooth as a substack of $\mg$, as we now explain.
First, $\tau_g(W) \subset \ag$ is generically smooth because it is reduced, since it is the image of $W$, which is reduced.
Taking a smooth dense open $Z' \subset \tau_g(W)$, we have that $\tau_g^{-1}(Z') \subset W$ is a dense open substack, hence it is also
smooth and has big monodromy. This implies $Z'$ also has big monodromy since the monodromy of a locus in $\mg$ agrees with the monodromy of its image in $\ag$ under $\tau_g$, as both can be identified with the monodromy action on
the first cohomology group.
We now conclude the proof by verifying that each locus in $\mg$ (in the first five cases) is normal, has big monodromy, and is rational or unirational when claimed. In fact, we just show the
substack has big geometric monodromy, since this implies it has big monodromy by
Proposition~\ref{proposition:big-geometric-monodromy-reduction}.

\begin{enumerate}[(a)]
              \item The hyperelliptic locus, $\hyperelliptic g$, has big geometric monodromy as was shown independently in~\cite[Lemma 8.12]{mumford:tata-lectures-on-theta-ii} and~\cite[Th\'eor\`eme 1]{acampo:tresses-monodromie-et-le-groupe-symplectique}.
	       The hyperelliptic locus $\hyperelliptic g$ is smooth and unirational because it is the quotient of an open subscheme of
	       $\mathbb P^{2g+2}_K$ by the smooth action of $\PGL_2$.
\item By~\cite[Theorem, p.~2]{bolognesi2016mapping}, $\trigonal g (M)$ has big geometric monodromy when $M < \frac{g}{3}-1$.
	      Additionally, $\trigonal g(M)$ is smooth and unirational because it can be expressed as a quotient
	      $[U/G]$ of a smooth rational scheme $U$ by a smooth group scheme $G$.
	      Here, $G$ is the group of automorphisms of the Hirzebruch surface $\mathbb F_M$
	      and $U$ is an open subscheme of the projectivization of the linear system of class $3e + \left(\frac{g + 3M + 2}{2}\right) f$ on $\mathbb F_M$, where $f$ is the class of the fiber over $\mathbb P^1$ and $e$
	      is the unique section with negative self-intersection (see ~\cite[p.~8]{bolognesi2016mapping} for an explanation
		of this description of $U$).
	       Note that in this application, we are implicitly translating between
the topological monodromy representation of $\mg$ described in ~\cite[Theorem, p.~2]{bolognesi2016mapping} and the algebraic Galois representation in
$\ag$, but these two representations are compatible, essentially because
both are given by the action of the fundamental group on the first cohomology
group.
\item In the case that $g \geq 5$, we have $\trigonal g(g \bmod 2)$ is birational to $\trigonal g$, so $\trigonal g$
	has a smooth dense open with big geometric monodromy by the previous part.
	Next, $\trigonal g$ is rational for $g \geq 5$ by~\cite[Theorem, p.~1]{ma2014rationality}.
	The cases $g = 3, 4$ hold because for such $g$, $\trigonal g$ forms a dense open in $\mg$, which is itself rational and smooth,
	as shown in the proof of part~\ref{big-mg} below.
\item By Remark \ref{remark:locus-of-plane-curves}, the locus of plane curves (as was also defined in Remark \ref{remark:locus-of-plane-curves}) in $\mg$ is smooth.
By~\cite[Th\'{e}or\`{e}me 4]{beauville1986groupe}, the locus of smooth degree-$d$ plane curves in the Hilbert scheme has big geometric monodromy. 
It follows from Lemma~\ref{lemma:surjectivity-criterion-pi1}
that the locus of plane curves has big monodromy.
The locus of \emph{smooth} degree-$d$ plane curves in the Hilbert scheme
is certainly rational,
       as it is an open subscheme of the Hilbert scheme of degree-$d$ plane curves, which is itself isomorphic to $\mathbb P_K^{\binom{d+2}{2}-1}$.
\item By~\cite[5.12]{deligne1969irreducibility}, the geometric monodromy of $\mg$ is all of $\Sp_{2g}(\widehat {\mathbb Z})$ for every $g \geq 1$. (Alternatively, the fact that $\mg$ has big geometric monodromy follows immediately from the corresponding fact for any one of parts (a)--(d).) Next, $\mg$ is smooth by \cite[Theorem 5.2]{deligne1969irreducibility}. We have that $\mg$ is unirational for $1 \leq g \leq 14$ by~\cite{verra2005unirationality}. Moreover, when $3 \leq g \leq 6$, we have that $\mg$ is rational; see~\cite[p.~2]{casnati2007rationality} for comprehensive references.
\item Note that $\ag$ has geometric big monodromy because $\ag$ contains $\mg$ and $\mg$ has monodromy $\Sp_{2g}(\widehat {\mathbb Z})$, as argued in point (d).
Further,
		       $\ag$ is smooth by \cite[Theorem 2.4.1]{oort:finite-group-schemes-local-moduli-for-abelian-varieties-and-lifting-problems}.
	We have that $\ag$ is unirational for $1 \leq g \leq 5$ \mbox{as shown in~\cite[p.~1]{verra2005unirationality}.}\qedhere
\end{enumerate}
\end{proof}

\begin{remark}
       \label{remark:mmmm}
       In most of the cases enumerated in Theorem~\ref{corollary:examples}, we actually know that the geometric monodromy is not only big, but also equal to $\Sp_{2g}(\widehat{\mathbb Z})$.
       By Corollary~\ref{corollary:criterion-for-applying-main},
       this occurs when $U$ has irreducible geometric generic fiber
       over any of the following loci:
       \begin{enumerate}
               \item The locus $\trigonal g (M)$ for any $M < \frac{g}{3}-1$, by~\cite[Theorem, p.~2]{bolognesi2016mapping};
               \item The locus of plane curves of degree $d$ with $d$ even, by~\cite[Theoreme 4(i)]{beauville1986groupe};
               \item The locus $\mg$ for any $g$, by~\cite[5.12]{deligne1969irreducibility};
	       \item The locus $\ag$ for any $g$, because $\mg \subset \ag$ and $\mg$ has full monodromy by point (d).
       \end{enumerate}
\end{remark}

\begin{remark}
       If $A \to U$ is a family with $\mono_A^{\on{geom}} = \Sp_{2g}(\zh)$, then the group $\mono_A$ can be determined as follows. The intersection $K \cap \bq^{\on{cyc}}$ is of the form $\bq(\zeta_n)$ for some $n \ge 2$. Let $r_n : \zh \to \bz / n \bz$ be the reduction map. Then
    \[
       \mono_A = \ker (r_n \circ \on{mult}) = \{M \in \Sp_{2g}(\zh) : \on{mult} M \equiv 1 \pmod{n} \},
    \]
       which follows from Remark~\ref{remark:det-rho-is-chi}.
Thus, when the conclusion of the preceding remark holds, Theorem~\ref{theorem:main} tells us the following:
    \begin{itemize}
           \item If $K\neq \bq$, or if $K = \bq$ and $g \ge 3$, then most $u \in U(K)$ have $\mono_{A_u} = \ker (r_n \circ \on{mult})$.
	   \item If $K = \bq$ and $g \in \left\{ 1,2 \right\}$, then most $u \in U(K)$ are such that $[\GSp_{2g}(\zh) : \mono_{A_u}] = 2$.
       \end{itemize}
\end{remark}

\begin{remark} \label{remark:}
	Theorem~\ref{corollary:examples}~\ref{big-hyperelliptic} tells us that if $U$ dominates $\hyperelliptic g$, then the conclusion of Theorem~\ref{theorem:main} holds for $U$. In the case where $U$ has irreducible geometric generic fiber, we can say explicitly what the monodromy group of the family is and what its commutator is. For example, let $\mathscr{Y}_{2g+2,K}$ denote the family of genus-$g$ hyperelliptic curves over $K$ with Weierstrass equation given by $y^2 = x^{2g+2} + a_{2g+1}x^{2g+1} + \dots + a_0$. We show in~\cite[Theorem 1.2]{landesman-swaminathan-tao-xu:hyperelliptic-curves} that most members of $\mathscr{Y}_{2g+2, K}$ have monodromy equal to $\mono_{\mathscr Y_{2g+2,K}}$ (which we explicitly compute) over $K \neq \QQ$, and have index-$2$ monodromy when $K = \QQ$. We neither prove nor state this result precisely here, but a complete statement and proof is given in~\cite{landesman-swaminathan-tao-xu:hyperelliptic-curves}.
\end{remark}

\appendix
\section{Explicit Surjectivity for Abelian Surfaces \\ By Davide Lombardo} \label{lombardstreet}

Let $K$ be a number field and $A/K$ be an abelian surface such that $\operatorname{End}_{\overline{K}}(A)=\mathbb{Z}$. For every place $w$ of $K$ at which $A$ has good reduction, let $\operatorname{Frob}_w$ be the corresponding Frobenius element of $\operatorname{Gal}\left( \overline{K}/K\right)$ and let $f_w(x)$ be the characteristic polynomial of $\operatorname{Frob}_w$ acting on $T_\ell A$, where $\ell$ is any prime different from the residual characteristic of $w$ (as it is well known, this definition is well-posed). Let $F(w)$ be the splitting field over $\mathbb{Q}$ of $f_w(x)$. 
By \autoref{remark:gee},
the Galois group of $F(w)/\mathbb{Q}$ is isomorphic to a subgroup of $(\bz/2\bz)^2 \rtimes S_2 \simeq D_4$, the dihedral group on 4 points.

To state our result we need the following function:
\begin{definition}\label{def:bFunction}
Let $\alpha(g)=2^{10}g^3$ and set
$
b(d,g,h)=\left( (14g)^{64g^2} d \max\left(h, \log d,1 \right)^2 \right)^{\alpha(g)}.
$
\end{definition}

We shall show the following result, which extends \cite[Theorem 1.2]{lombardo2015explicit} to the case of abelian surfaces:
\begin{proposition}\label{prop:Main}
Let $v$ be a place of $K$, of good reduction for $A$, such that the Galois group of $f_v(x)$ is isomorphic to $D_4$. Let $q_v$ be the order of the residue field at $v$. \mbox{For all primes $\ell$, let}
\[
\rho_{\ell^\infty} : \operatorname{Gal}\left( \overline{K}/K \right) \to \operatorname{Aut}(T_\ell A) \cong \operatorname{GL}_4(\mathbb{Z}_\ell)
\]
be the natural $\ell$-adic Galois representation attached to $A/K$. We have $\operatorname{Im} \rho_{\ell^\infty}=\operatorname{GSp}_4(\mathbb{Z}_\ell)$ for all primes $\ell$ that are unramified in $K$ and strictly larger than
\[
\max\{b(2[K:\mathbb{Q}],4,2h(A))^{1/4}, (2q_v)^8 \}.
\]
\end{proposition}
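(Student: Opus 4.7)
The plan is to reduce the problem to a statement about the mod-$\ell$ image, and then rule out, one family at a time, the proper subgroups of $\GSp_4(\mathbb F_\ell)$ that could contain this image. Specifically, by the symplectic lifting result \cite[Theorem 1]{landesman-swaminathan-tao-xu:lifting-symplectic-group}, it suffices to show that the mod-$\ell$ reduction $\overline{\rho}_\ell$ surjects onto $\Sp_4(\mathbb F_\ell)$; the multiplier part of the image is then recovered from Remark~\ref{remark:det-rho-is-chi} and the fact that the mod-$\ell$ cyclotomic character is surjective, since $\ell$ is unramified in $K$. So the whole proof is concentrated on establishing $\Sp_4(\mathbb F_\ell) \subseteq \overline{\rho}_\ell(\Gal(\overline K / K))$.

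For the mod-$\ell$ step I would invoke an Aschbacher-style classification of the maximal subgroups of $\GSp_4(\mathbb F_\ell)$ that do not contain $\Sp_4(\mathbb F_\ell)$. For $\ell$ sufficiently large, these fall into a short list: reducible subgroups, imprimitive subgroups preserving a direct-sum or Lagrangian decomposition, subgroups lying inside a field-extension embedding $\GL_2(\mathbb F_{\ell^2}) \hookrightarrow \GL_4(\mathbb F_\ell)$, normalizers of a $2 \otimes 2$ tensor decomposition, and a finite list of exceptional small (almost simple) subgroups. Each geometric family is ruled out by the Frobenius hypothesis at $v$: if the image lay in such a subgroup, then the reduction mod $\ell$ of $\rho_{\ell^\infty}(\Frob_v)$ would have characteristic polynomial with a forced factorization or symmetry (for instance, a factorization over $\mathbb F_\ell$ or over $\mathbb F_{\ell^2}$), forcing the mod-$\ell$ splitting-field Galois group of $f_v(x)$ to be a proper subgroup of $D_4$. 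Provided $\ell$ is large enough compared to $\on{disc}(f_v)$, which is controlled by a power of $q_v$, reduction mod $\ell$ preserves the Galois group, so this contradicts $\Gal(F(v)/\mathbb Q) \cong D_4$. This is where the term $(2q_v)^8$ enters the bound.

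The subgroups arising from additional endomorphism structure (including both the exceptional small subgroups and those forcing an extra $\overline K$-endomorphism or an isogeny to an abelian surface with non-generic endomorphisms) are handled by the Faltings-height input: any such subgroup produces either an extra endomorphism of $A$ over $\overline K$, violating the assumption $\End_{\overline K}(A) = \mathbb Z$, or an $\ell$-isogeny to an abelian surface whose degree is bounded polynomially in $\ell$. A Masser--W\"ustholz-type isogeny estimate, packaged exactly as in \cite[Theorem 1.2]{lombardo2015explicit}, then forces $\ell \le b(2[K:\mathbb Q],4,2h(A))^{1/4}$, contradicting the hypothesis. Combining these two streams of argument rules out every maximal subgroup and gives $\overline{\rho}_\ell(\Gal(\overline K/K)) \supseteq \Sp_4(\mathbb F_\ell)$.

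The main obstacle I expect is the treatment of the subgroups that are peculiar to $g=2$ and do not arise in the higher-dimensional arguments of \cite{lombardo2015explicit}: in particular, the imprimitive subgroups stabilizing a pair of Lagrangian $2$-planes, the field-extension subgroups from $\GSp_2(\mathbb F_{\ell^2})$ (which do not have an analogue in odd dimension), and the exceptional almost simple subgroups coming from low-dimensional coincidences. For these cases one cannot simply cite the existing argument; the $D_4$-Frobenius condition must be combined carefully with the Weil bound $|\on{tr}\,\rho_{\ell^\infty}(\Frob_v)| \le 4\sqrt{q_v}$ and, in the exceptional-subgroup case, with the isogeny bound coming from $b(d,g,h)$. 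Once each family is eliminated, the lifting theorem promotes mod-$\ell$ surjectivity to $\operatorname{Im} \rho_{\ell^\infty} = \GSp_4(\mathbb Z_\ell)$, completing the proof.
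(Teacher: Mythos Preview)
Your broad architecture---classify maximal subgroups of $\GSp_4(\mathbb F_\ell)$, eliminate each using either the Frobenius at $v$ or the Faltings-height/isogeny bound, then lift---is the right shape, and it is indeed the structure underlying the paper's argument. But the allocation of tools to subgroup families is essentially inverted, and this creates a real gap.

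In the paper (citing \cite[Theorem~3.19]{lombardoGL2type}), the height bound $b(2[K{:}\mathbb Q],4,2h(A))^{1/4}$ already disposes of the reducible, imprimitive, field-extension, and tensor-type subgroups: these all force extra endomorphism or isogeny structure, which is where Masser--W\"ustholz bites. What survives is a single ``type~(2)'' case, namely the image of $\GL_2$ under the twisted symmetric cube, in which every element has eigenvalues of the shape $\lambda\lambda_1^3,\lambda\lambda_1^2\lambda_2,\lambda\lambda_1\lambda_2^2,\lambda\lambda_2^3$. You place this case (an almost simple $\mathcal S$-type subgroup) in the ``Faltings-height'' bucket, but it is precisely the case that the height bound does \emph{not} exclude, and it is the sole reason the $D_4$ hypothesis is needed at all.

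Your proposed Frobenius mechanism also does not handle this case. Over $\mathbb F_\ell$ the splitting field of $f_v \bmod \ell$ always has cyclic Galois group, so ``reduction mod $\ell$ preserves the Galois group $D_4$'' is never literally true, and membership in the $\mathrm{Sym}^3$ subgroup imposes no factorization pattern over $\mathbb F_\ell$ or $\mathbb F_{\ell^2}$---it imposes a \emph{multiplicative relation} $\nu_2^2=\nu_1\nu_3$ among eigenvalues. The paper's argument is: lift this to three distinct roots $x,y,z$ of $f_v$ with $y^2\equiv xz\pmod\lambda$; prove via an explicit $D_4$-computation (Lemma~\ref{lemma:Nonzero}) that $y^2-xz\neq 0$ in characteristic~$0$; then take the norm down to $\mathbb Q$ to get a nonzero integer divisible by $\ell$ with $|N_{F(v)/\mathbb Q}(y^2-xz)|\le (2q_v)^{8}$ by the Weil bounds. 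That norm argument is the missing idea in your sketch and is exactly where the constant $(2q_v)^8$ comes from.
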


From now on, let $v$ be a place as in the statement of proposition \ref{prop:Main}. 
Notice that $f_v(x)$ is irreducible by assumption, hence all its roots are simple. Moreover, $f_v(x)$ doesn't have any real roots, because (by the Weil conjectures) every root of $f_v(x)$ has absolute value $\sqrt{q_v}$, hence its only possible real roots are $\pm \sqrt{q_v}$. But these are algebraic numbers of degree at most 2 over $\mathbb{Q}$, while $f_v(x)$ is irreducible of degree 4, contradiction.
In particular, the roots of $f_v(x)$ come in complex conjugate pairs, so we shall denote them by $\mu_1, \mu_2, \iota(\mu_1), \iota(\mu_2)$, where $\iota : \mathbb{C} \to \mathbb{C}$ is complex conjugation.  We shall need the following lemma:

\begin{lemma}\label{lemma:Nonzero}
Let $x,y,z$ be three distinct eigenvalues of $\operatorname{Frob}_v$. We have $y^2 \neq xz$.
\end{lemma}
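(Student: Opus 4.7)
The plan is to exploit the symplectic constraint together with the $D_4$-structure of $\operatorname{Gal}(F(v)/\mathbb{Q})$ to derive a contradiction from the supposed relation $y^2 = xz$. The action of Galois on $T_\ell A$ is symplectic with multiplier $q_v$, so the four roots of $f_v(x)$ pair up as $\{\mu_1,\iota(\mu_1)\}$ and $\{\mu_2,\iota(\mu_2)\}$ with $\mu_1\iota(\mu_1) = \mu_2\iota(\mu_2) = q_v$. As a transitive subgroup of $S_4$ of order $8$ that preserves this pair structure, $\operatorname{Gal}(F(v)/\mathbb{Q}) \cong D_4$ must contain both complex conjugation $\iota$ and a $4$-cycle $\sigma$, realized (after relabeling) as $\mu_1 \mapsto \mu_2 \mapsto \iota(\mu_1) \mapsto \iota(\mu_2) \mapsto \mu_1$.

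First I would dispose of the easy case. If $\{x,z\}$ is one of the conjugate pairs $\{\mu_i,\iota(\mu_i)\}$, then $xz = q_v$, which forces $y^2 = q_v$ and hence $y = \pm\sqrt{q_v}$ is real, contradicting that $f_v(x)$ has no real roots (as noted in the excerpt). Otherwise $x$ and $z$ come from different conjugate pairs, and by applying Galois elements to send $y$ to $\mu_1$ and inspecting the remaining configurations for $\{x,z\}$, one reduces to the archetypal relation $\mu_1^3 = q_v \mu_2$ (the alternate reduction, yielding $\mu_1^3 \mu_2 = q_v^2$, is handled analogously).

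Applying $\sigma$ to $\mu_1^3 = q_v\mu_2$ gives $\mu_2^3 = q_v\iota(\mu_1) = q_v^2/\mu_1$, i.e., $\mu_1\mu_2^3 = q_v^2$. Substituting $\mu_2 = \mu_1^3/q_v$ from the first relation then yields $\mu_1^{10} = q_v^5$, so $\zeta \defeq \mu_1^2/q_v$ is a fifth root of unity. If $\zeta = 1$ then $\mu_1 = \pm\sqrt{q_v}$ is real, a contradiction. Otherwise $\zeta$ is primitive, and $\mathbb{Q}(\zeta_5) = \mathbb{Q}(\mu_1^2) \subseteq \mathbb{Q}(\mu_1)$; since both fields have degree $4$ over $\mathbb{Q}$ --- the former by $[\mathbb{Q}(\zeta_5):\mathbb{Q}] = \varphi(5) = 4$ and the latter by irreducibility of $f_v(x)$ --- equality holds. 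But $\mathbb{Q}(\zeta_5)/\mathbb{Q}$ is cyclic, so the splitting field $F(v)$ coincides with $\mathbb{Q}(\mu_1)$ and has Galois group $\mathbb{Z}/4\mathbb{Z}$, contradicting $\operatorname{Gal}(F(v)/\mathbb{Q}) \cong D_4$.

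The main obstacle is the case reduction in the second paragraph: one must verify that every non-conjugate-pair configuration of $\{x,y,z\}$ is $D_4$-equivalent, after using the substitution $\iota(\mu_i) = q_v/\mu_i$, to one of the two archetypes above. This is a routine finite case check, and it is the only step that requires the full assumption $\operatorname{Gal}(F(v)/\mathbb{Q}) \cong D_4$ rather than a proper subgroup; the ultimate contradiction arises from the incompatibility between the cyclotomic field $\mathbb{Q}(\zeta_5)$ (which is cyclic over $\mathbb{Q}$) and the non-abelian dihedral Galois hypothesis.
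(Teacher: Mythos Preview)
Your argument is correct but takes a more circuitous route than the paper's. Both proofs dispose of the case $\{x,z\}=\{\mu_i,\iota(\mu_i)\}$ identically. For the remaining case, the paper reduces (by relabeling) to $y=\iota(\mu_1)$, $x=\mu_1$, $z=\mu_2$, and then applies not a $4$-cycle but the \emph{reflection} $\sigma\in D_4$ that fixes $\mu_1,\iota(\mu_1)$ and swaps $\mu_2\leftrightarrow\iota(\mu_2)$. Applying this $\sigma$ to $\iota(\mu_1)^2=\mu_1\mu_2$ gives $\iota(\mu_1)^2=\mu_1\iota(\mu_2)$, whence $\mu_2=\iota(\mu_2)$ is real --- an immediate contradiction. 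Your approach via the $4$-cycle, leading to $\mu_1^{10}=q_v^5$ and then invoking $\mathbb{Q}(\zeta_5)$ being cyclic, works as well, but the paper's choice of Galois element yields the contradiction in one line without any cyclotomic input. The trade-off: your argument makes the role of the full $D_4$ hypothesis very explicit (it fails for the cyclic $C_4$), whereas the paper's argument uses only the existence of the particular transposition, which is equally characteristic of $D_4$ but less visibly so.
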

\begin{proof}
Suppose first that $z=\iota(x)$. Then $y^2=x\iota(x)=q_v$, which implies that $y=\pm \sqrt{q_v}$ is a root of $f_v(x)$. As we have already seen, this is a contradiction. Hence, up to renaming the eigenvalues of $\operatorname{Frob}_v$ if necessary, we can assume $x=\mu_1, z=\mu_2$ and $y=\iota(\mu_1)$. Since $\operatorname{Gal}(F(v)/\mathbb{Q})$ is isomorphic to $D_4$ by assumption, there is a $\sigma \in \operatorname{Gal}(F(v)/\mathbb{Q})$ such that $\sigma(\mu_1)=\mu_1, \sigma(\iota(\mu_1))=\iota(\mu_1), \sigma(\mu_2)=\iota(\mu_2)$ and $\sigma(\iota(\mu_2))=\mu_2$. Applying $\sigma$ to the equality $y^2=xz$, that is, $\iota(\mu_1)^2=\mu_1\mu_2$, we get $\iota(\mu_1)^2=\mu_1\iota(\mu_2)$, whence $\iota(\mu_2)=\mu_2$. 
But this implies that $\mu_2$ is real, which is once again a contradiction.
\end{proof}

\begin{proof}[Proof of Proposition \ref{prop:Main}]
Let $\ell$ be a prime unramified in $K$ and strictly larger than $b(2[K:\mathbb{Q}],4,2h(A))^{1/4}$.
Let $\rho_\ell : \operatorname{Gal}\left(\overline{K}/K \right) \to \operatorname{Aut} A[\ell]$ be the natural Galois representation associated with the $\ell$-torsion of $A$. 

Much of the proof of~\cite[Theorem 3.19]{lombardoGL2type} still applies in the current setting, and shows that one of the following holds:
\begin{enumerate}
\item $\operatorname{Im}(\rho_{\ell^\infty}) = \operatorname{GSp}_{4}(\mathbb{Z}_\ell)$
\item the image of $\rho_\ell$ is contained in a maximal subgroup of $\operatorname{GSp}_4(\mathbb{F}_\ell)$ of type (2) in the sense of Theorem 3.3 in~\cite{lombardoGL2type}.
\end{enumerate}
If we are in case (1) we are done, so assume we are in case (2). To conclude the proof, we shall show that $\ell \leq (2q_v)^8$. If $\ell$ is equal to the residual characteristic of $v$ this inequality is obvious, so we can assume that $v \nmid \ell$. In this case, the characteristic polynomial of the action of $\operatorname{Frob}_v$ on $T_\ell A$ is $f_v(x)$.
 By \cite[Lemma 3.4]{lombardoGL2type}, the eigenvalues of any $x \in \operatorname{Im}(\rho_\ell)$ can be written as $\lambda \cdot \lambda_1^3, \lambda \cdot  \lambda_1^2\lambda_2, \lambda \cdot \lambda_1\lambda_2^2,\lambda \cdot \lambda_2^3$ for some $\lambda, \lambda_1, \lambda_2 \in \mathbb{F}_{\ell^2}^\times$. Taking $g\defeq\rho_\ell(\operatorname{Frob}_v)$, we may assume the four eigenvalues $\nu_1, \ldots, \nu_4$ of $g$ satisfy $\nu_2^2=\nu_1\nu_3$. 

Let $\lambda$ be a place of $F(v)$ of characteristic $\ell$ and identify $\lambda$ with a maximal ideal of $\mathcal{O}_{F(v)}$. Since $f_v(x)$ splits completely in $F(v)$ by definition, its four roots $\mu_1, \mu_2, \iota(\mu_1), \iota(\mu_2)$ all belong to $\mathcal{O}_{F(v)}$. Upon reduction modulo $\lambda$, these four roots yield four elements of $\mathcal{O}_{F(v)}/\lambda$, which is a finite field of characteristic $\ell$. Moreover, as $\{\mu_1, \mu_2, \iota(\mu_1), \iota(\mu_2)\}$ is a Galois-stable set, its image in $\overline{\mathbb{F}_\ell}$ independent of the choice embedding of $\mathcal{O}_{F(v)}/\lambda$ into $\overline{\mathbb{F}_\ell}$, hence well defined. Denote by $\overline{\mu_1}, \overline{\mu_2}, \overline{\iota(\mu_1)}, \overline{\iota(\mu_2)}$ the images of $\mu_1, \mu_2, \iota(\mu_1), \iota(\mu_2)$ in $\overline{\mathbb{F}_\ell}$.

Now observe that the characteristic polynomial of $g$ is the reduction modulo $\ell$ of $f_v(x)$, so its roots $\nu_1, \ldots, \nu_4 \in \overline{\mathbb{F}_\ell}^\times$ must coincide with $\overline{\mu_1}, \overline{\mu_2}, \overline{\iota(\mu_1)}, \overline{\iota(\mu_2)}$ in some order.
Given that $\nu_2^2=\nu_1\nu_3$, there are three (necessarily distinct) eigenvalues of $\operatorname{Frob}_v$, call them $x,y,z$, that satisfy $y^2- xz \equiv 0 \pmod{ \lambda}$.  
By Lemma~\ref{lemma:Nonzero}, $N_{F(v)/\mathbb{Q}}(y^2-xz)$ is a nonzero integer. 
 Therefore, $N_{F(v)/\mathbb{Q}}(y^2-xz)$ has positive valuation at $\lambda$, hence it is divisible by $\ell$. In turn, this gives
 \[
 \ell \leq |N_{F(v)/\mathbb{Q}}(y^2-xz)| = \prod_{\sigma \in \operatorname{Gal}(F(v)/\mathbb{Q})}|\sigma(y)^2-\sigma(x)\sigma(z)| \leq (2q_v)^8,
 \]
where the inequality $|\sigma(y)^2-\sigma(x)\sigma(z)| \leq 2q_v$ follows immediately from the triangle inequality and the Weil conjectures.
\end{proof}

\section*{Acknowledgments}

\noindent This research was supervised by Ken Ono and David Zureick-Brown at the Emory University Mathematics REU and was supported by the National Science Foundation (grant number DMS-1557960). We would like to thank David Zureick-Brown for suggesting the problem that led to the present article and for offering us his invaluable advice and guidance; in particular, we acknowledge David Zureick-Brown for providing a detailed outline of the material on heights in Section~\ref{subsubsection:appendix-on-heights}.
and for much help proving
Proposition~\ref{proposition:stacky-dominant-map-surjective-fundamental-group}. We would like to acknowledge Brian Conrad for his meticulous efforts in providing us with enlightening comments, corrections, and suggestions on nearly every part of this paper. In addition, we would like to thank Davide Lombardo for writing an appendix for the present article and for many fruitful conversations. 
We thank Daniel Litt for much help proving Proposition~\ref{proposition:big-geometric-monodromy-reduction}. We also thank the anonymous referees for their helpful comments and suggestions.
Finally, we would like to thank Jeff Achter, Jarod Alper, Michael Aschbacher, Anna Cadoret, Alina Cojocaru, John Cullinan, Dougal Davis, Anand Deopurkar, Noam Elkies, Jordan Ellenberg, Tony Feng, Nick Gill, Jack Hall, Joe Harris, Eric Katz, Mark Kisin, Ben Moonen, Jackson Morrow, Anand Patel, Bjorn Poonen, Jeremy Rickard, Eric Riedl, Simon Rubinstein-Salzedo, David Rydh, Jesse Silliman, Jacob Tsimerman, Evelina Viada, Erik Wallace, and Alex Wright for their helpful advice. We used {\tt magma} and \mbox{\emph{Mathematica} for explicit calculations.}

\bibliographystyle{alpha}
\bibliography{bibfile}

\end{document}